\newcommand{\Whyp}[5]{\,\mbox{}_{#1}W_{#2}\!\left({#3};{#4};{#5}\right)}
\newcommand{\Wpsi}[5]{\,\mbox{}_{#1}\Psi_{#2}\!\left({#3};{#4};{#5}\right)}
\newcommand{\qhyp}[5]{\,\mbox{}_{#1}\phi_{#2}\!\left(
\genfrac{}{}{0pt}{}{#3}{#4};#5\right)}
\newcommand{\qphyp}[6]{\,\sideset{_{#1}^{\phantom{\mid}}}{_{#2}^{#3}}
{\mathop{\phi}}\!\left(\genfrac{}{}{0pt}{}{#4}{#5};#6\right)}
\newcommand{\qppsi}[6]{\,\sideset{_{#1}^{\phantom{\mid}}}{_{#2}^{#3}}
{\mathop{\psi}}\!\left(\genfrac{}{}{0pt}{}{#4}{#5};#6\right)}
\newcommand{\hyp}[5]{\,\mbox{}_{#1}F_{#2}\!\left(
 \genfrac{}{}{0pt}{}{#3}{#4};#5\right)}
 \newcommand{\Hhyp}[5]{\,\mbox{}_{#1}H_{#2}\!\left(
 \genfrac{}{}{0pt}{}{#3}{#4};#5\right)}
\newcommand{\nqphyp}[3]{\sideset{_{#1}^{\phantom{\mid}}}{_{#2}^{#3}}{\mathop{\phi}}}
\newcommand{\nlqphyp}[3]{\sideset{_{#1}}{_{#2}^{#3}}{\mathop{\phi}}}
\newcommand{\nqppsi}[3]{\sideset{_{#1}^{\phantom{\mid}}}{_{#2}^{#3}}{\mathop{\psi}}}
\newcommand{\nlqppsi}[3]{\sideset{_{#1}}{_{#2}^{#3}}{\mathop{\psi}}}
\newcommand{\II}[1]{\overset{\raisebox{0.5ex}{$#1$}}{\raisebox{-0.55cm}
{\resizebox{0.3cm}{!}{\scalebox{0.4}[1.0]{\mbox{$\mathbb{I}$}}}}}}
\newtheorem{thm}{Theorem}[section]
\newtheorem{cor}[thm]{Corollary}
\newtheorem{rem}[thm]{Remark}
\newtheorem{lem}[thm]{Lemma}
\newtheorem{defn}[thm]{Definition}
\def\eqnarray{\stepcounter{equation}\let\@currentlabel=\theequation
\global\@eqnswtrue
\tabskip\@centering\let\\=\@eqncr
$$\halign to \displaywidth\bgroup\hfil\global\@eqcnt\z@
$\displaystyle\tabskip\z@{##}$&\global\@eqcnt\@ne
\hfil$\displaystyle{{}##{}}$\hfil
&\global\@eqcnt\tw@ $\displaystyle{##}$\hfil
\tabskip\@centering&\llap{##}\tabskip\z@\cr}
\def\endeqnarray{\@@eqncr\egroup
\global\advance\c@equation\m@ne$$\global\@ignoretrue}
\def\@yeqncr{\@ifnextchar [{\@xeqncr}{\@xeqncr[5pt]}}
\newcommand{\Z}{\mathbb{Z}} %INTEGERS
\newcommand{\N}{\mathbb{N}} %POSITIVE NATURAL NUMBERS, n = 1, 2, ...
\newcommand{\CC}{{{\mathbb C}}}
\newcommand{\CCast}{{{\mathbb C}^\ast}}
\newcommand{\qpsi}[5]{\,\mbox{}_{#1}\psi_{#2}\!\left(
\genfrac{}{}{0pt}{}{#3}{#4};#5\right)}
\newcommand{\qpWhyp}[6]{\,\sideset{_{#1}^{\phantom{\mid}}}{_{#2}^{#3}}
{\mathop{W}}\!\left({#4};{#5};#6\right)}
\newcommand{\qpWpsi}[6]{\,\sideset{_{#1}^{\phantom{\mid}}}{_{#2}^{#3}}
{\mathop{\Psi}}\!\left({#4};{#5};#6\right)}
\newcommand{\vt}{{\vartheta}}
\newcommand{\midtilde}{\raisebox{-0.25\baselineskip}{\textasciitilde}}
\begin{document}

\renewcommand{\PaperNumber}{***}

\FirstPageHeading

\ShortArticleName{Transformations and summations for bilateral basic hypergeometric series}

\ArticleName{Transformations and summations for bilateral\\basic hypergeometric series}

% Names of the authors for the title of the paper
\Author{Howard S. Cohl$\,^{\ast
%,\dag
}$ and Michael J. Schlosser$\,^{\ast\ast
%,\dag
}$ 
%Please carefully check the accuracy of 
%names and affiliations. 
%$\,^{\dag}$ and 
%Roberto S. Costas-Santos$\,^{\S}$
%and Linus Ge $^{3,\dagger}$*
}

\AuthorNameForHeading{H.~S.~Cohl}
\Address{$^\ast$ Applied and Computational 
Mathematics Division, National Institute of Standards 
and Tech\-no\-lo\-gy, Mission Viejo, CA 92694, USA
%Address of First Author, Country
\URLaddressD{
\href{http://www.nist.gov/itl/math/msg/howard-s-cohl.cfm}
{http://www.nist.gov/itl/math/msg/howard-s-cohl.cfm}
}
} % Address of First Author
\EmailD{howard.cohl@nist.gov} % E-mail address of First Author

\AuthorNameForHeading{H.~S.~Cohl, M.~J.~Schlosser}
\Address{$^{\ast\ast}$ Fakult\"at f\"ur Mathematik,
Universit\"at Wien,
Oskar-Morgenstern-Platz 1, Vienna,
Austria
%Address of First Author, Country
\URLaddressD{
\href{https://www.mat.univie.ac.at/~schlosse/}
{https://www.mat.univie.ac.at/\midtilde{}schlosse/}
}
} % Address of First Author
\EmailD{michael.schlosser@univie.ac.at} % E-mail address of First Author
%\Address{$^\S$ Departamento de F\'isica y Matem\'{a}ticas,
%Universidad de Alcal\'{a},
%c.p. 28871, Alcal\'{a} de Henares, Spain} 
% Address of First Author
%\URLaddressD{
%\href{http://www.rscosan.com}
%{http://www.rscosan.com}
%}
%\EmailD{rscosa@gmail.com} % E-mail address of First Author

%\Address{$^{\S\S}$ Department of Mathematics,
%University of Rochester, Rochester, NY 14627, USA
%% Address of First Author
%}
%\EmailD{random9483@gmail.com} % E-mail address of First Author
%\Address{$^\dag$ Dedicated to Mourad E.~H.~Ismail on the occasion of his 80th birthday.}

\ArticleDates{Received \today~in final form ????; Published online ????}

\Abstract{%
We review and derive transformation and summation formulas for bilateral basic hypergeometric series.
Our study focuses on consequences of certain bilateral extensions of two important results by Bailey,
namely a transformation for very-well-poised $_8W_7$ series in terms of two balanced $_4\phi_3$ series,
and a transformation connecting three $_8W_7$ series.
Two rather recently discovered transformations of bilateral basic very-well-poised ${}_8\Psi_8$,
one by Zhang and Zhang, the other by Wei and Yu, serve as the starting point of our investigations.
%The first  transformation is given as a sum of two nonterminating ${}_8W_7$'s and the second is given
%in terms of a sum of a ${}_4\psi_4$ and two balanced ${}_4\phi_3$'s.
From these transformations we work out interesting special cases that were not considered before,
including explicit bilateral quadratic and cubic summations. We further
explicitly record noteworthy lower-level transformations derived by taking suitable limits and
deduce more transformations by exploiting the symmetry of the parameters in the series.}
\Keywords{
Nonterminating bilateral basic hypergeometric series; 
Nonterminating basic hypergeometric series; transformations}
%Please type here List of Keywords for your article separated by semicolon.
% Keywords required only for MST, PB, PMB, PM, JOA, JOB?
% Keywords:

\Classification{33D15; 33D50}
%{??????} % e.g. 35A30; 81Q05
%For 2010 Mathematics Subject Classification see
%http://www.ams.org/mathscinet/msc/msc2010.html

\begin{flushright}
\begin{minipage}{60mm}
\it Dedicated to Mourad E.~H.~Ismail\\on the occasion of his 80\textsuperscript{th} birthday.
\end{minipage}
\end{flushright}

\section{Introduction}
The theory of basic hypergeometric series \cite{GaspRah} is rich in identities.
This theory arose out of an interest to extend classical results for hypergeometric series
to that for the \textit{basic} (or $q$-)case. The first systematic development
of the theory was undertaken by Heine in the 1840s, but only some decades
later, in the first half of the 20th century, its development attracted broad interest,
with researchers such as Jackson, Bailey, Watson, Slater, and others, 
being leading pioneers.

While numerous identities (summation and transformation formulas) for unilateral
basic hypergeometric series exist, not so many identities are known for \textit{bilateral}
basic hypergeometric series, the list of closed form summations essentially being
limited (in one variable, i.e., for single series) to Ramanujan's $_1\psi_1$ summation
and Bailey's very-well-poised $_6\psi_6$ summation and their special cases
(such as Jacobi's triple product identity, and the quintuple product identity),
apart from some other summations (such as those of $q$-Karlsson--Minton type
which even hold for a general number of parameters, or, on the contrary,
summations for some highly specialized bilateral basic hypergeometric series,
such as those of Rogers--Ramanujan type \cite{Schlosserbilateral}).

This paper is devoted to the study of identities for bilateral basic hypergeometric series
(in one variable), focusing on identities containing series up to the level of
$_8\psi_8$ basic hypergeometric series. (For identities on the level of $_{10}\psi_{10}$ series we merely review the general ones considered by Slater. However, they do not appear to be bilateral extensions of Bailey's 4-term very-well-poised ${}_{10}W_9$ transformations or of relevant special cases of them, such as Bailey's 3-term very-well-poised $_8\phi_7$ transformations.) The identities we feature are all deduced
from transformation formulas for $_8\psi_8$ series obtained by
Zhang and Zhang~\cite{ZhangZhang2007} (we correct their main result)
and by Wei and Yu~\cite{WeiYu2021}.
On one hand, we hope that the compilation of identities we give, having everything at one place, is useful. On the other hand,
we list some particularly nice consequences that were not considered
explicitly before, such as an interesting cubic summation, or
some quadratic summations; these are derived by specializing the
$_8\psi_8$ transformations and combining them with other known results.

%while further material that is closely connected to
%bilateral basic series is given for easy reference in Appendix~\ref{sec:tpi}.

The remainder of this introductory section is devoted to some preliminaries
regarding notation. In Section~\ref{sec:uni} we recall some important identities
involving unilateral basic hypergeometric series that we make use of. In Section~\ref{sec:bil} we turn to bilateral basic hypergeometric series
with a focus on various transformations satisfied by them. In Theorem~\ref{thm22}, we have presented a corrected version of the Zhang--Zhang transformation of a very-well-poised ${}_8\psi_8$ \cite[Theorem 6]{ZhangZhang2007} which is a bilateral extension of Bailey's transformation \cite[(17.9.16)]{NIST:DLMF} of a nonterminating very-well-poised ${}_8W_7$ in terms of a sum of two balanced ${}_4\phi_3$ series. In the new Theorem~\ref{thm337H7}, we give a $q\to 1^{-}$ limit of the Zhang--Zhang transformation which provides a transformation of a very-well-poised ${}_7H_7$ in terms of a ${}_4H_4$ and two balanced ${}_4F_3$'s. By starting with Theorem  \ref{thm22}, we obtain in Corollary~\ref{corbcubic} a new cubic summation formula involving a very-well-poised ${}_8\psi_8$ series and a ${}_2\phi_1$ series with base $q^3$.
The specialization of Corollary~\ref{corbcubic} to the unilateral case
(a cubic summation involving a very-well-poised ${}_8\phi_7$ series and a ${}_2\phi_1$ series with base $q^3$)
already appears to be new and is given as Corollary~\ref{corcubic}. In Corollary~\ref{cor38} we present a new transformation of a very-well-poised ${}_8W_7$ in terms a ${}_4\psi_4$ and two balanced ${}_4\phi_3$'s. In Theorem~\ref{thmwy} we revisit the Wei--Yu transformation of a very-well-poised ${}_8\psi_8$ in terms another very-well-poised ${}_8\psi_8$ and a very-well-poised ${}_8W_7$ \cite[Theorem 3]{WeiYu2021}. In Corollaries \ref{cor311a}, \ref{cor311}, \ref{cor312} we present new nonterminating summation formulas for very-well-poised ${}_8\psi_8$'s. 
Corollaries \ref{cor311}, \ref{cor312} are bilateral extensions of the nonterminating very-well-poised ${}_8W_7$ summations  \cite[(II.16), (II.18)]{GaspRah}. 
By equating the Zhang--Zhang and Wei--Yu transformations, we give a new transformation of a very-well-poised ${}_8\psi_8$, a ${}_4\psi_4$, an ${}_8W_7$, and two balanced ${}_4\phi_3$'s. Note that in \cite{WeiYu2021} the authors only realized that their Theorem~\ref{thmwy} extended Bailey's ${}_6\Psi_6$ summation (see Remark \ref{rem310a}). Other than this, all consequences of the Wei--Yu transformation which appear in this manuscript appear to be new.

In Theorem~\ref{thm21}, we derive a sequence of transformation formulas by starting with Bailey's transformation of a very-well-poised ${}_8W_7$ in terms of a sum of two balanced ${}_4\phi_3$'s. This interesting sequence of transformations was studied in \cite{vandeBultRains09} (see in particular \cite[Figure 2]{vandeBultRains09}). van de Bult and Rains (2009) start from Bailey's four-term transformation formula for nonterminating ${}_{10}W_9$. Starting with Bailey's transformation of a very-well-poised
${}_8W_7$ in terms of a sum of two balanced ${}_4\phi_3$'s, the first limit gives a transformation of a ${}_7W_6^1$ series to a special ${}_3\phi_2$ series \eqref{W761tran}. The second limit gives the transformation of a ${}_6W_5^2$ to a ${}_2\phi_1$
with an arbitrary argument and general parameters.
The extension of this transformation to bilateral basic hypergeometric series
was discovered by Bailey \eqref{Bailey}; it relates a ${}_6\Psi_6^2$ to a ${}_2\psi_2$.
One of the goals of this paper is to study a scheme of transformations for bilateral basic hypergeometric series
that is analogous to that for the unilateral transformations of Bailey-type, where the elements in the scheme 
are connected by taking limits.

Consequences of the featured transformation formulas are given in Sections~\ref{sec:lim}
and \ref{sec:imp}. Since the corrected Zhang--Zhang result provides a bilateral extension of Bailey's transformation \eqref{Baileytran}, we utilize that result in Section \ref{sec:lim} to deduce bilateral lower-level
transformations by suitably taking limits.
In Section \ref{sec:imp}, we derive further transformation formulas, not by taking limits but by starting
with the transformation formulas derived in Section \ref{sec:lim} and exploiting the symmetry in the parameters.
While the transformation formulas in Sections \ref{sec:lim} and \ref{sec:imp} are all routine consequences
of more general results, we record them explicitly as we believe that it is useful to have them available in one place.
As far as we are aware, they have not
been stated explicitly before (with exceptions, such as the very useful Corollary~\ref{cor63b}),
and appear for the first time in this paper.

\subsection{Preliminaries}

\begin{defn}\label{def:1.1}
We adopt the following conventions for succinctly 
writing elements of multisets. To indicate 
sequential positive and negative 
elements, we write
\[
\pm a:=\{a,-a\}.
\]
\noindent We also adopt an analogous notation
\[
z^{\pm}:=\{z,z^{-1}\}.
\]
%\noindent In the same vein, consider the numbers $f_s\in\mathbb C$ 
%with $s\in{\mathcal S}\subset\mathbb N$,
%with ${\mathcal S}$ finite.
%Then, the notation
%$\{f_s\}$
%represents the set of all complex numbers 
%$f_s$ such that $s\in\SSS$.
%Furthermore, consider some $p\in\SSS$, then the notation
%$\{f_s\}_{s\ne p}$ represents the sequence of all complex numbers
%$f_s$ such that $s\in\SSS\!\setminus\!\{p\}$.
\end{defn}
\noindent We adopt the following set 
notations: $\mathbb N_0:=\{0\}\cup
\mathbb N=\{0, 1, 2,\ldots\}$, and we 
use the sets $\mathbb Z$, $\mathbb R$, 
$\mathbb C$ which represent 
the integers, real numbers, and 
complex numbers respectively, 
$\CCast:=\CC\setminus\{0\}$.  
%$\CCdag:=\{z\in\CCast: |z|<1\}$.
%$\CCddag:=\CC\setminus(\{0\}\cup\{z\in\CCast:|z|=1\})$.
\noindent 
Define the set
$q^{\mathbb Z}:=\{q^k:k\in\Z\}.$
The 
$q$-shifted factorial is given by 
\begin{equation}
(a;q)_n:=(1-a)(1-qa)\cdots(1-q^{n-1}a),\qquad n\in\mathbb N_0,
\end{equation}
and one further defines
\begin{eqnarray}
&&\hspace{-11.3cm}(a;q)_\infty:=\prod_{n=0}^\infty 
(1-aq^{n}),\label{poch.id:2}
\end{eqnarray}
where $|q|<1$. Using this infinite product, one can extend
the $q$-shifted factorial to arbitrary complex index $b$,
by defining
\begin{equation}
(a;q)_b:=\frac{(a;q)_\infty}{(q^ba;q)_\infty}.
\end{equation}
In particular, when dealing with bilateral series we will frequently
use the $q$-shifted factorial with the index $b$ being a negative integer,
say, $b=-n$ where $n\in\N$, in which case we see that
\begin{equation}
(a;q)_{-n}=q^{\binom{n+1}2}(-a)^{-n}\Big(\frac qa;q\Big)_n^{-1}
\end{equation}
(an identity which is actually valid for any $n\in\Z$).
We will also use the common 
notational product convention
\begin{eqnarray}
%&&\hspace{-8cm}(a_1, \ldots, a_k)_b:=
%(a_1)_b\cdots(a_k)_b,\nonumber\\
&&\hspace{-8cm}(a_1, \ldots, a_k;q)_b:=
(a_1;q)_b\cdots(a_k;q)_b,\nonumber
\end{eqnarray}
where $b\in{\mathbb C}\cup\{\infty\}$.
The {\it theta function} $\vartheta(z;q)$ (sometimes referred to as a modified theta function 
\cite[(11.2.1)]{GaspRah})
is defined
by 
Jacobi's triple product identity and is
given by {\cite[(1.6.1)]{GaspRah}} (see also \cite[(2.3)]{Koornwinder2014})
\begin{equation}
\vartheta(z;q):=
(z,\tfrac{q}{z};q)_\infty=\frac{1}{(q;q)_\infty}\sum_{n=-\infty}^\infty q^{\binom{n}{2}}(-z)^n,
\label{tfdef}
\end{equation}
where $z\ne 0$, $|q|<1$. Note that $\vartheta(q^n;q)=0$ if
$n\in\Z$.
We will use the compact product notation
\begin{equation*}
\vartheta(z_1,\ldots,z_k;q):=\vartheta(z_1;q)\cdots\vartheta(z_k;q).
\end{equation*}

\begin{lem}One has the following modified theta function addition formula \cite[Exercise 2.16(i)]{GaspRah}
\begin{equation}
\vt\Big(e,\frac{e}{c},\frac{qa}{d},\frac{qc}{ad};q\Big)
-\vt\Big(d,\frac{d}{c},\frac{qa}{e},\frac{qc}{ae};q\Big)=
\frac{d}{c}\,\vt\Big(a,\frac{c}{a},\frac{e}{d},\frac{de}{c};q\Big).
\label{tfa}
\end{equation}
\end{lem}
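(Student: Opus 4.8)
The plan is to fix $q$, $c$, $d$, $e$ and to read \eqref{tfa} as an identity between holomorphic functions of the single variable $a\in\CCast$, and to prove it by a Liouville-type (elliptic function) argument. The only properties of $\vt$ I will need are the quasi-periodicity $\vt(qz;q)=-z^{-1}\vt(z;q)$, the inversion rules $\vt(q/z;q)=\vt(z;q)$ and $\vt(1/z;q)=-z^{-1}\vt(z;q)$, and the fact that $\vt(q^{n};q)=0$ for $n\in\Z$; all three follow immediately from the product form $\vt(z;q)=(z,q/z;q)_\infty$. Write $f(a)$ for the left-hand side minus the right-hand side of \eqref{tfa} and set $g(a):=\vt(a,c/a;q)$.

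First I would compute how each of the three theta products transforms under $a\mapsto qa$. Using the quasi-periodicity twice, the $a$-dependent factor $\vt(qa/d,qc/(ad);q)$ of the first term picks up the scalar $c/(qa^2)$; the factor $\vt(qa/e,qc/(ae);q)$ of the second term picks up the same scalar (the computation is identical with $e$ in place of $d$); and $g(a)=\vt(a,c/a;q)$ transforms by $c/(qa^2)$ as well. Hence $f(qa)=\frac{c}{qa^2}f(a)$ and likewise $g(qa)=\frac{c}{qa^2}g(a)$, so the quotient $f/g$ is invariant under $a\mapsto qa$ and therefore descends to a function on the torus $\CCast/q^{\Z}$.

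Next I would locate the zeros of $g$ and check that $f$ vanishes there. In a fundamental annulus $g$ has exactly the two zeros $a=1$ and $a=c$ (simple, for generic parameters). At $a=1$ and at $a=c$ the right-hand side of \eqref{tfa} vanishes because it carries the factor $\vt(1;q)=0$; on the left-hand side the two theta products become equal after applying the inversion rule $\vt(q/z;q)=\vt(z;q)$, so they cancel. Thus $f(1)=f(c)=0$, the singularities of $f/g$ at the zeros of $g$ are removable, and $f/g$ is a holomorphic function on the compact torus, hence a constant $K$.

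Finally, to show $K=0$ I would evaluate at $a=d$. There the first theta product of the left-hand side vanishes (it contains $\vt(q;q)=0$), while the surviving left-hand term and the right-hand term are seen to match once the inversion rules are applied, so $f(d)=0$; since $g(d)=\vt(d,c/d;q)\neq0$ for generic parameters, $K=f(d)/g(d)=0$ and therefore $f\equiv 0$ on $\CCast$. The identity for all admissible $a,c,d,e$ then follows by analytic continuation. I expect the main obstacle to be bookkeeping: tracking the scalars produced by the quasi-periodicity so that all three products genuinely share the multiplier $c/(qa^2)$, and, at each of the special points $a\in\{1,c,d\}$, collapsing the surviving theta products by the inversion relations; one must also be careful to exploit three vanishing points, two to remove the poles of $f/g$ and a third lying off the zero locus of $g$ to force the constant to vanish. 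Alternatively, \eqref{tfa} may be recognized as a special case of the Weierstrass three-term theta relation after a suitable substitution, but the Liouville argument sketched above is self-contained.
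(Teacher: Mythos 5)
The paper does not actually prove this lemma: it is stated as a quoted result with a pointer to \cite[Exercise 2.16(i)]{GaspRah}, so there is no in-paper argument to compare against. Your Liouville-type proof is correct and complete, and it is the classical elliptic-function proof of what is usually called the Weierstrass three-term theta relation. The key computations all check out: writing $g(a)=\vt(a,\frac{c}{a};q)$, the quasi-periodicity $\vt(qz;q)=-z^{-1}\vt(z;q)$ gives the common multiplier $\frac{c}{qa^2}$ for the $a$-dependent parts of all three terms of \eqref{tfa} under $a\mapsto qa$ (for the first term, $\vt(\frac{q^2a}{d};q)=-\frac{d}{qa}\vt(\frac{qa}{d};q)$ and $\vt(\frac{c}{ad};q)=-\frac{c}{ad}\vt(\frac{qc}{ad};q)$ combine to $\frac{c}{qa^2}$, and identically for the other two); the inversion rules $\vt(\frac qz;q)=\vt(z;q)$ and $\vt(\frac 1z;q)=-z^{-1}\vt(z;q)$ do make the two left-hand products coincide at $a=1$ and $a=c$ (where the right-hand side carries $\vt(1;q)=0$), and they make the surviving terms cancel at $a=d$ (where $\vt(\frac{qa}{d};q)=\vt(q;q)=0$ and $\vt(\frac ca;q)=-\frac cd\,\vt(\frac dc;q)$ supplies exactly the prefactor $\frac dc$). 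Since $g$ has only the two simple zeros $[1]$ and $[c]$ on $\CCast/q^{\Z}$ for generic $c$, and $g(d)\neq 0$ generically, your conclusion $f\equiv 0$ followed by analytic continuation in the parameters is sound. What your approach buys over the paper's bare citation is a self-contained verification; the only caveat is that it is longer than simply invoking the known identity, and you could shorten the final step by noting that $a=d$ and $a=e$ play symmetric roles, either one serving as the third evaluation point.
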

\noindent The $q$-shifted factorial 
also has the 
following useful property
\cite[(1.8.10)]
{Koekoeketal}:
\begin{eqnarray}
\label{poch.id:3} 
&&\hspace{-10.1cm}
(a;q^{-1})_n=q^{-\binom{n}{2}}(-a)^n(a^{-1};q)_n.
\end{eqnarray}
Note the equivalent representation of \eqref{poch.id:3} 
which is very useful 
for obtaining limits of basic hypergeometric and bilateral basic hypergeometric series, 
%which we often need is
\begin{equation*}
%\hspace{-7.2cm}
a^n\left(\frac{x}{a};q\right)_n=
q^{\binom{n}{2}}(-x)^n\left(\frac{a}{x};q^{-1}\right)_n,
\end{equation*}
therefore
\begin{equation}
\lim_{a\to0}\,a^n\left(\frac{x}{a};q\right)_n=
\lim_{b\to\infty}\,\frac{1}{b^n}\left(xb;q\right)_n=
q^{\binom{n}{2}}(-x)^n.
\label{critlim}
\end{equation}

\noindent Furthermore, one has the following
identities
% \cite[p.~8]{Ismailetal2022},
 \cite[(5), (6)]{CohlCostasSantos23},
\begin{eqnarray}
&&\hspace{-11cm}\label{sq}
(a^2;q)_\infty=(\pm a,\pm q^\frac12 a;q)_\infty,\\
&&\hspace{-11.cm}\label{aiden}
\frac{(a,\frac{q}{a};q)_\infty}{(qa,\frac{1}{a};q)_\infty}=
\frac{\vt(a;q)}{\vt(qa;q)}
%=\frac{\vartheta(a;q)}{\vartheta(qa;q)}
=-a.
\end{eqnarray}
Finally, we will use the following {\it capital I}
%\textit{idem} 
notation
\begin{eqnarray}
&&\hspace{-2.4cm}\II{x_1;x_2,\dots,x_k} f(x_1,\ldots,x_k):=
\sum_{j=1}^kf(x_j,x_2,x_3,\ldots, x_{j-1},x_1,x_{j+1},\ldots,x_{k-1},x_k)
\end{eqnarray}
($f(x_1,\ldots,x_k)$ being any function depending on $x_1,\ldots,x_k$),
appearing e.g.,~in \eqref{psi101010Psi10}, \eqref{psi101010W9}.
This means that the whole expression after the large {\it I} symbol on the left
is taken as a summand and is then
additively repeated, with $x_2$ switched with $x_1$,
then with $x_3$ switched with $x_1$, etc.,
and finally with $x_k$ switched with $x_1$,
the sum having $k$ terms in total.
We refer to this notation as \textit{idem} notation (which we employ instead of the notation $\operatorname{idem} (x_1;x_2,\ldots,x_k) f(x_1,\ldots,x_k)$) frequently appearing in the literature on bilateral (basic) hypergeometric series, see e.g., Sears (1951) \cite[p.~481]{Sears1951}, Slater (1966) \cite[p.~192]{Slater66}, \cite[(4.5.2)]{GaspRah}).

\section{Unilateral basic hypergeometric series transformations}
\label{sec:uni}

For (ordinary) hypergeometric series (sometimes called \textit{generalized hypergeometric series},
and denoted as $_rF_s$ series, to distinguish them from the classical hypergeometric $_2F_1$ function,
extensively studied by Euler, Gauss, Riemann, Kummer, etc.),
we refer the reader to the classic textbook by Slater \cite{Slater66}.

The basic hypergeometric series (cf.\ \cite{GaspRah}), which we 
frequently use, is defined for
$q,z\in\CCast$ such that $|q|<1$, $s,r\in\mathbb N_0$, 
$b_j\not\in q^{-\N_0}$, 
$j=1,...,s$, as
\cite[(1.10.1)]{Koekoeketal}
\begin{equation}
\qhyp{r}{s}{a_1,...,a_r}
{b_1,...,b_s}
{q,z}
:=\sum_{k=0}^\infty
\frac{(a_1,...,a_r;q)_k}
{(q,b_1,...,b_s;q)_k}
\left((-1)^kq^{\binom k2}\right)^{s-r+1}
z^k,
\label{2.11}
\end{equation}
whenever the series converges.
This nonterminating infinite series \eqref{2.11} is entire if and only if $s-r+1 > 0$, is convergent for $s=r-1$ if $|z|<1$ and is divergent for $s-r+1<0$.
We sometimes specifically refer to the series in \eqref{2.11} as a \textit{unilateral} basic hypergeometric series,
to distinguish it from a \textit{bilateral} basic hypergeometric series that we consider in Section~\ref{sec:bil}.

%Note that we refer to a basic hypergeometric series as {\it $\ell$-balanced} if $q^\ell a_1\cdots a_r=b_1\cdots b_s$,  and {\it balanced} if $\ell=1$.
Note that we refer to a basic hypergeometric
series as {\it balanced} if
$qa_1\cdots a_r=b_1\cdots b_s$.
Further, define the nonterminating very-well-poised 
basic hypergeometric series
${}_{r+1}W_r$ \cite[(2.1.11)]{GaspRah}
\begin{equation}
\label{rWr}
{}_{r+1}W_r(a;a_4,\ldots,a_{r+1};q,z)
:=\qhyp{r+1}{r}{a,\pm q\sqrt{a},a_4,\ldots,a_{r+1}}
{\pm \sqrt{a},\frac{qa}{a_4},\ldots,\frac{qa}{a_{r+1}}}{q,z},
\end{equation} 
where $\sqrt{a},\frac{qa}{a_4},\ldots,\frac{qa}{a_{r+1}}\not\in q^{-\N_0}$. 
We will use the following notation %${}_{r+1}\phi_s^m$
$\nqphyp{r}{s}{m}$, $m\in\N_0$
(originally due to van de Bult and Rains
\cite[p.~4]{vandeBultRains09}), 
for basic hypergeometric series when some denominator elements are equal to zero.
Let $p\in\mathbb N_0$.
Then define, for multisets ${\bf a}:=\{a_1,\ldots,a_{r+1}\}$ and
${\bf b}:=\{b_1,\ldots,b_s\}$,
\begin{equation}\label{topzero} 
{}_{r+1}\phi_s^{-p}\left(\begin{array}{c}{\bf a}\\
{\bf b}\end{array};q,z
\right)
:=
\qhyp{r+p+1}{s}
{{\bf a},\underbrace{0,\ldots,0}_{p}
}%{{\bf a},\overbrace{0,\ldots,0}^{p}}%\\
{\bf b}{q, z},
\end{equation}
\begin{equation}\label{botzero}
{}_{r+1}\phi_s^{\,p}\left(\begin{array}{c}{\bf a}\\
{\bf b}\end{array};q,z\right)
:=
%\rphisx{r+1}{s+p}{{\bf a}\\ 
\qhyp{r+1}{s+p}{\bf a}%\\ 
{{\bf b},%\underbrace{0,\ldots,0}_{p}
\overbrace{0,\ldots,0}^{p}}{q,z},
\end{equation}
where $b_1,\ldots,b_s\not
\in q^{-\N_0}\cup\{0\}$, and
$
\nlqphyp{r+1}{s}{0}
%{}_{r+1}\phi_s^0
%=\rphisx{r+1}{s}{}{}
={}_{r+1}\phi_{s}
.$
The nonterminating basic hypergeometric series %${}_{r+1}\phi_s^m
$\nlqphyp{r+1}{s}{m}
({\bf a};{\bf b};q,z)$ is well-defined for $s-r+m\ge 0$.
In particular, $\nlqphyp{r+1}{s}{m}$
is an entire function of $z$ for $s-r+m>0$, convergent for $|z|<1$ for $s-r+m=0$
and divergent if $s-r+m<0$ {unless the series is terminating}.
Note that we will move interchangeably between the
van de Bult and Rains notation and the alternative
notation with vanishing numerator and denominator elements
which are used on the right-hand side of %\eqref{topzero} and 
\eqref{botzero}.

Further, consider the nonterminating very-well-poised 
basic hypergeometric series with vanishing denominator elements
${}_{r+1}W_r^p$ \cite[p.~4]{vandeBultRains09}
\begin{equation}
\label{rpWr}
{}_{r+1}W_r^p(a;a_4,\ldots,a_{r+1};q,z)
:=\qphyp{r+1}{r}{p}{a,\pm q\sqrt{a},a_4,\ldots,a_{r+1}}
{\pm \sqrt{a},\frac{qa}{a_4},\ldots,\frac{qa}{a_{r+1}}}{q,z},
\end{equation} 
where $\sqrt{a},\frac{qa}{a_4},\ldots,\frac{qa}{a_{r+1}}\not\in q^{-\N_0}$. Recognize that ${}_{r+1}W_r^0={}_{r+1}W_r$. Note that in the case where $p<0$, then the nonterminating basic hypergeometric series ${}_{r+1}W_r^p$ is divergent since the corresponding nonterminating basic hypergeometric series ${}_r\phi_s$ \eqref{2.11} has $s-r+1<0$ (and thus, the series contains negative
quadratic powers of $q$, responsible for its divergence).
Therefore we only consider the nonterminating case ${}_{r+1}W_r^p$ with $p\ge 0$.
%As pointed out in \cite[p.~4]{vandeBultRains09}, usually one cannot obtain the nonterminating very-well-poised series with vanishing denominator parameters as a limit from very-well-poised series. 

\subsection{Nonterminating transformations}
\label{nontermbhssec}
One has the following nonterminating transformation between a ${}_2\phi_2$ 
and a ${}_2\phi_1$ (cf.~\cite[(III.4)]{GaspRah})
\begin{equation}
\qhyp22{a,b}{c,\frac{abz}{c}}{q,z}=
\frac{(\frac{bz}{c};q)_\infty}{(\frac{abz}{c};q)_\infty}
\qhyp21{a,\frac{c}{b}}{c}{q,\frac{bz}{c}},
\label{rel2122}
\end{equation}
valid for $|bz|<|c|$.
More generally, we have (cf.~\cite[(III.9), (III.10)]{GaspRah})
\begin{align}
\qhyp32{a,b,c}{d,e}{q,\frac{de}{abc}}&=\frac{(\frac ea,\frac{de}{bc};q)_\infty}{(e,\frac{de}{abc};q)_\infty}
\qhyp32{a,\frac db,\frac dc}{d,\frac{de}{bc}}{q,\frac{e}{a}}\label{eq:32-1}\\
&=\frac{(b,\frac{de}{ab},\frac{de}{bc};q)_\infty}{(d,e,\frac{de}{abc};q)_\infty}
\qhyp32{\frac db,\frac eb,\frac{de}{abc}}{\frac{de}{ab},\frac{de}{bc}}{q,b},\label{eq:32-2}
\end{align}
valid for $|de|<|abc|$, $|e|<|a|$, and $|b|<1$.
In this paper we will make use of the following three-term $_2\phi_1$ transformation~\cite[(III.32)]{GaspRah}
\begin{align}
\qhyp21{a,b}{c}{q,z}&=\frac{(b,\frac{c}{a},az,\frac{q}{az};q)_\infty}{(
c,\frac{b}{a},z,\frac qz;q)_\infty}\qhyp21{a,\frac{qa}{c}}{\frac{qa}{b}}{q,\frac{qc}{abz}}
%\notag\\&\quad\;
+\frac{(a,\frac{c}{b},bz,\frac q{bz};q)_\infty}{(c,\frac{a}{b},z,\frac qz;q)_\infty}
\qhyp21{b,\frac{qb}{c}}{\frac{qb}{a}}{q,\frac{qc}{abz}},
\label{2phi1-3term}
\end{align}
valid for $\max(|z|,|\frac{qc}{abz}|)<1$.
There is also a useful transformation of a ${}_2\phi_1$ expressed as a sum of two ${}_3\phi_2$'s with one vanishing denominator element \cite[\href{http://dlmf.nist.gov/17.9.E3_5}{(17.9.3\_5)}]{NIST:DLMF}
\begin{equation}
\qhyp21{a,b}{c}{q,z}=\frac{(\frac{c}{a},\frac{c}{b};q)_\infty}{(
c,\frac{c}{ab};q)_\infty}\qphyp311{a,b,\frac{abz}{c}}{\frac{qab}{c}}{q,q}+
\frac{(a,b,\frac{abz}{c};q)_\infty}{(c,z,\frac{ab}{c};q)_\infty}\qphyp311{\frac{c}{a},\frac{c}{b},z}{\frac{qc}{ab}}{q,q},
\label{2phi3phi2dzero}
\end{equation}
valid for $|z|<1$.
We will also rely upon the following three-term nonterminating transformation for a special ${}_3\phi_2$
\cite[\href{http://dlmf.nist.gov/17.9.E13}{(17.9.13)}]{NIST:DLMF}
\begin{equation}
\qhyp32{a,b,c}{d,e}{q,\frac{de}{abc}}=\frac{(\frac{e}{b},\frac{e}{c};q)_\infty}{(e,\frac{e}{bc};q)_\infty}\qhyp32{\frac{d}{a},b,c}{d,\frac{qbc}{e}}{q,q}+\frac{(\frac{d}{a},b,c,\frac{de}{bc};q)_\infty}
{(d,e,\frac{bc}{e},\frac{de}{abc};q)_\infty}
\qhyp32{\frac{e}{b},\frac{e}{c},\frac{de}{abc}}{\frac{de}{bc},\frac{qe}{bc}}{q,q},
\label{3phi2nbtran}
\end{equation}
where $|de|<|abc|$.
One also has the important transformation due to Bailey 
which gives a nonterminating ${}_8W_7$ in terms of a sum of two nonterminating balanced ${}_4\phi_3$'s 
\cite[\href{http://dlmf.nist.gov/17.9.E16}{(17.9.16)}]{NIST:DLMF}
\begin{eqnarray}
&&\hspace{-0.9cm}{}_8W_7\left(a;b,c,d,e,f;q,\frac{q^2a^2}{bcdef}\right)
=\frac{(qa,\frac{qa}{de},\frac{qa}{df},\frac{qa}{ef};q)_\infty}
{(\frac{qa}{def},\frac{qa}{d},\frac{qa}{e},\frac{qa}{f};q)_\infty}
\qhyp43{\frac{qa}{bc},d,e,f}{\frac{qa}{b},\frac{qa}{c},\frac{def}{a}}
{q,q}\nonumber\\*&&
\hspace{3cm}+\frac{(qa,\frac{q^2a^2}{bdef},\frac{q^2a^2}{cdef},\frac{qa}{bc},d,e,f
;q)_\infty}
{(\frac{q^2a^2}{bcdef},\frac{def}{qa},\frac{qa}{b},\frac{qa}{c},\frac{qa}{d},\frac{qa}{e},\frac{qa}{f};q)_\infty}
\qhyp43{\frac{q^2a^2}{bcdef},\frac{qa}{de},\frac{qa}{df},\frac{qa}{ef}}
{\frac{q^2a^2}{bdef},\frac{q^2a^2}{cdef},\frac{q^2a}{def}}{q,q},
\label{Baileytran}
\end{eqnarray}
where $|q^2a^2|<|bcdef|$, with no vanishing denominator elements.

By taking advantage of Bailey's transformation of a very-well-poised
nonterminating ${}_8W_7$ \eqref{Baileytran}, we can obtain some interesting 
%and insightful 
single basic hypergeometric series transformation formulas. 
{The following sequence of
transformation formulas is implied by \cite[Figure 2]{vandeBultRains09}.}

%\begin{rem}
%\label{vd}
%In order to simplify the constraints for the nonterminating infinite $q$-shifted factorials, modified theta functions and nonterminating basic and bilateral hypergeometric series expressions which we will present below, we will mostly avoid adding the constraints which must occur to prevent vanishing denominator factors which are not defined.
%\end{rem}

\begin{thm}
\label{thm21}
Let $0<|q|<1$, $z,a,b,c,d,e\in\CCast$. Then
\begin{eqnarray}
\label{W761tran}
&&\hspace{-0.6cm}\qpWhyp{7}{6}{1}{a}{b,c,d,e}{q,\frac{q^2a^2}{bcde}}\!=\!
\frac{(qa,\frac{qa}{de};q)_\infty}{(\frac{qa}{d},\frac{qa}{e};q)_\infty}\qhyp32{\frac{qa}{bc},d,e}{\frac{qa}{b},\frac{qa}{c}}{q,\frac{qa}{de}},
\\
\label{W652tran}
&&\hspace{-0.6cm}\qpWhyp{6}{5}{2}{a}{b,c,d}{q,\frac{q^2a^2}{bcd}}\!=\!
\frac{(qa;q)_\infty}{(\frac{qa}{d};q)_\infty}\qhyp22{\frac{qa}{bc},d}{\frac{qa}{b},\frac{qa}{c}}{q,\frac{qa}{d}}\!=\!
\frac{(qa,\frac{qa}{bd};q)_\infty}{(\frac{qa}{b},\frac{qa}{d};q)_\infty}\qhyp21{b,d}{\frac{qa}{c}}{q,\frac{qa}{bd}},\\
\label{W543tran}
&&\hspace{-0.6cm}\qpWhyp{5}{4}{3}{a}{b,c}{q,\frac{q^2a^2}{bc}}\!=\!
(qa;q)_\infty\qhyp12{\frac{qa}{bc}}{\frac{qa}{b},\frac{qa}{c}}{q,qa}
\!=\!
\frac{(qa;q)_\infty}{(\frac{qa}{b};q)_\infty}\qhyp11{b}{\frac{qa}{c}}{q,\frac{qa}{b}}\nonumber\\*
&&\hspace{3.15cm}\!=\!\frac{(qa,\frac{qa}{bc};q)_\infty}{(\frac{qa}{b},\frac{qa}{c};q)_\infty}\qphyp201{b,c}{-}{q,\frac{qa}{bc}}\!=\!\frac{(qa,b;q)_\infty}{(\frac{qa}{c};q)_\infty}
\qphyp11{-1}{\frac{qa}{bc}}{\frac{qa}{b}}{q,b},
\\
\label{W434tran}
&&\hspace{-0.6cm}\qpWhyp{4}{3}{4}{a}{b}{q,\frac{q^2a^2}{b}}\!=\!
(qa;q)_\infty\qhyp01{-}{\frac{qa}{b}}{q,qa}
\!=\!
\frac{(qa;q)_\infty}{(\frac{qa}{b};q)_\infty}\qphyp101{b}{-}{q,\frac{qa}{b}}\nonumber\\*
&&\hspace{2.85cm}\!=\!(qa,b;q)_\infty
\qphyp01{-2}{-}{\frac{qa}{b}}{q,b},
\\
\label{W325tran}
&&\hspace{-0.6cm}\qpWhyp{3}{2}{5}{a}{-}{q,q^2a^2}\!=\!
(qa;q)_\infty\qphyp001{-}{-}{q,qa}
\!=\!(qa,\pm iq\sqrt{a};q)_\infty\!\qphyp03{-2}{-}{-q,\pm iq\sqrt{a}}{q,qa},
\end{eqnarray}
whenever the series converge. 
%Furthermore, even though the left-hand sides of \eqref{W761tran}--\eqref{W325tran} are entire functions of their arguments, one assumes that the right-hand sides will have the modulus of their arguments less than unity in case they are not entire functions of their arguments.
\end{thm}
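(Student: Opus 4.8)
The plan is to realize the whole chain \eqref{W761tran}--\eqref{W325tran} as a descending sequence of limits issuing from Bailey's transformation \eqref{Baileytran}; this reproduces the unilateral scheme of van de Bult and Rains. The one tool used at every step is the limit relation \eqref{critlim}: driving a numerator parameter of a very-well-poised series to $\infty$ sends its very-well-poised partner $qa/(\cdot)$ to $0$, so the series picks up one more vanishing denominator element. In the notation of \eqref{rpWr} and \eqref{botzero} this converts a ${}_{r+1}W_r^p$ into a ${}_{r}W_{r-1}^{p+1}$, while the factor $(-1)^kq^{\binom k2}$ generated per term by \eqref{critlim} is precisely the one bookkept by the shifted exponent $s-r+1$ in \eqref{2.11}. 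Accordingly the five successive limits $f\to\infty$, $e\to\infty$, $d\to\infty$, $c\to\infty$, $b\to\infty$ walk the left-hand side down ${}_8W_7\to{}_7W_6^1\to{}_6W_5^2\to{}_5W_4^3\to{}_4W_3^4\to{}_3W_2^5$, each limit simultaneously removing one factor from the denominator of the balancing argument $q^2a^2/(bcdef)$.

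First I would establish the top line \eqref{W761tran} by letting $f\to\infty$ in \eqref{Baileytran}. On the left the ${}_8W_7$ tends termwise to the asserted ${}_7W_6^1$ as above. In the first ${}_4\phi_3$ on the right the numerator parameter $f$ and the denominator parameter $def/a$ both go to $\infty$; by \eqref{critlim} their quotient contributes $(a/(de))^k$ per term, which together with the built-in $q^k$ turns that ${}_4\phi_3$ into the ${}_3\phi_2$ with argument $qa/(de)$, while the accompanying infinite products collapse (the factors $qa/(df),qa/(ef),qa/(def),qa/f$ all tending to $0$) to the prefactor $\tfrac{(qa,qa/(de);q)_\infty}{(qa/d,qa/e;q)_\infty}$ displayed in \eqref{W761tran}.

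The crux is to show that the second ${}_4\phi_3$ term of \eqref{Baileytran} disappears in this limit. The series factor there tends to a finite limit (summable by the $q$-binomial theorem), so the vanishing must originate in the prefactor, whose only unbounded constituent is the ratio $(f;q)_\infty/(def/(qa);q)_\infty$ of two competing growing products. A short asymptotic analysis of this ratio shows that it tends to $0$ exactly when $|de|>|qa|$ --- which is precisely the condition for the limiting ${}_3\phi_2$ in \eqref{W761tran} to converge. Hence the second term drops out throughout the domain of convergence, \eqref{W761tran} holds there, and it propagates to all admissible parameters by analytic continuation, in agreement with the proviso ``whenever the series converge''. Controlling these two divergences, and justifying the interchange of the limit with the summation, is the main technical obstacle; the subsequent limits are of the same but milder type.

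With \eqref{W761tran} secured, the remaining lines follow by iterating the same limit and by rewriting the reduced series with the nonterminating transformations gathered in Section~\ref{nontermbhssec}. Letting $e\to\infty$ in \eqref{W761tran} yields the first equality of \eqref{W652tran} (the ${}_3\phi_2$ collapsing to a ${}_2\phi_2$), and applying the ${}_2\phi_2\to{}_2\phi_1$ reduction \eqref{rel2122} followed by the $b\leftrightarrow c$ permutation symmetry of the underlying ${}_6W_5^2$ gives its second equality; continuing with $d\to\infty$, then $c\to\infty$, then $b\to\infty$ descends to \eqref{W543tran}, \eqref{W434tran}, \eqref{W325tran}. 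The several alternative right-hand sides listed in each of these lines are then images of one another under the transformations \eqref{eq:32-1}, \eqref{eq:32-2}, \eqref{2phi1-3term}, \eqref{2phi3phi2dzero}, \eqref{3phi2nbtran}, once more combined with the permutation symmetry of the free very-well-poised parameters $b,c,d,\dots$ to bring each form into the shape displayed. Apart from the vanishing-term issue settled above, all that remains is the routine tracking of prefactors and arguments through each limit, again licensed inside the common region of convergence and extended by analyticity.
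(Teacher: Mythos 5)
Your overall strategy---descending from Bailey's transformation \eqref{Baileytran} by successive confluent limits---is the same as the paper's, but your treatment of the crucial first step \eqref{W761tran} is genuinely different. You send $f\to\infty$, so that the first $_4\phi_3$ of \eqref{Baileytran} deforms directly into the target ${}_3\phi_2$ with argument $\frac{qa}{de}$, and you must then kill the second term by analyzing the ratio $(f;q)_\infty/(\frac{def}{qa};q)_\infty$. That ratio equals $\vartheta(f;q)/\vartheta(\frac{def}{qa};q)$ up to factors tending to $1$, and it tends to $0$ only along sequences $f=f_0q^{-m}$ and only when $|de|>|qa|$ (with non-uniformity near the zeros of the theta function), after which analytic continuation is invoked. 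The paper instead takes $b\to\infty$ (then relabels $f\mapsto b$): there \emph{both} terms of \eqref{Baileytran} survive and converge termwise without any delicate asymptotics, yielding two balanced-type ${}_3\phi_2(\cdots;q,q)$ series which are then merged into the single ${}_3\phi_2$ of \eqref{W761tran} by the three-term transformation \eqref{3phi2nbtran}. Your route is more direct but buys that directness at the cost of the theta-quotient vanishing argument; the paper's route is entirely elementary at the price of invoking \eqref{3phi2nbtran}. Both are legitimate. For the lower levels, note also that your derivation of the second form of \eqref{W652tran} from the ${}_2\phi_2$ form needs one more step than you state: \eqref{rel2122} applied to ${}_2\phi_2(\frac{qa}{bc},d;\frac{qa}{b},\frac{qa}{c};q,\frac{qa}{d})$ produces a ${}_2\phi_1$ with argument $b$, and a further Heine-type transformation (not just the $b\leftrightarrow c$ symmetry) is required to reach the displayed ${}_2\phi_1(b,d;\frac{qa}{c};q,\frac{qa}{bd})$; the paper avoids this by obtaining that form as a direct $c\to\infty$ limit of \eqref{W761tran}.

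There is one concrete gap: the second equality in \eqref{W325tran}, rewriting $(qa;q)_\infty\,{}_0\phi_0^1(-;-;q,qa)$ as $(qa,\pm iq\sqrt{a};q)_\infty\,{}_0\phi_3^{-2}(-;-q,\pm iq\sqrt a;q,qa)$, is \emph{not} an image of the other forms under the linear transformations \eqref{eq:32-1}--\eqref{3phi2nbtran} combined with parameter permutations, since by that stage no free very-well-poised parameters remain. It is a quadratic-type identity for the Ramanujan function and requires the external result of Gessel and Stanton \cite[(7.6)]{GesselStanton83}, which is exactly what the paper cites at that point. Your toolkit as described cannot produce it.
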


\begin{proof}
For \eqref{W761tran}, start with Bailey's transformation of a nonterminating very-well-poised ${}_8W_7$ to a sum of two nonterminating balanced ${}_4\phi_3$'s
\eqref{Baileytran}
and take the limit $b\to\infty$ (or $c\to\infty$), replace $f\mapsto b$ (or $f\mapsto c$),
and then apply the three-term nonterminating transformation for a ${}_3\phi_2$
\eqref{3phi2nbtran}.
For \eqref{W652tran}, start with \eqref{W761tran},
take the limit as $e\to\infty$ (or $d\to\infty$) 
produces the ${}_2\phi_2$ representation and taking 
the limit as $c\to\infty$ (or $b\to\infty$) and then 
replacing variables accordingly so that $b,c,d$ remains, produces
the result.
For \eqref{W543tran}, starting with \eqref{W652tran}:~(i) 
taking the limit in the ${}_2\phi_2$ representation as 
$d\to\infty$ produces the ${}_1\phi_2$ representation; 
(ii) taking the limit as $b\to\infty$ (or $c\to\infty$) 
in the ${}_2\phi_2$ representation or taking the limit 
as $b\to\infty$ (or $d\to\infty$) in the ${}_2\phi_1$ 
representation produces the ${}_1\phi_1$ representation; 
(iii) taking the limit as $c\to\infty$ in the ${}_2\phi_1$ 
representation produces the ${}_2\phi_0^1$ representation.
For \eqref{W434tran}, starting with \eqref{W543tran}:~(i) 
taking the limit as $c\to\infty$ (or $b\to\infty$) in 
the ${}_1\phi_2$ representation, or taking the limit 
$b\to\infty$ in the ${}_1\phi_1$ representation 
produces the ${}_0\phi_1$ representation;
(ii) taking the limit $c\to\infty$ in the ${}_1\phi_1$ 
representation produce, or taking the limit as $
c\to\infty$ (or $b\to\infty$) in the ${}_2\phi_0^1$ 
representation 
produces 
the ${}_1\phi_0^1$ representation.
For \eqref{W325tran}, starting with \eqref{W434tran}, 
taking the limit as $b\to\infty$ in either the 
${}_0\phi_1$ or ${}_1\phi_0^1$ representations produces the first result. The second result in \eqref{W325tran} is obtained by using \cite[(7.6)]{GesselStanton83}.
This
completes the proof.
\end{proof}

\begin{rem} 
The transformations of the special ${}_3\phi_2(a,b,c;d,e;q,de/(abc))$ which appears in \eqref{W761tran}
is alluded 
to in \cite[p.~20]{vandeBultRains09}. 
Also for the nonterminating transformation of the general 
${}_2\phi_1(a,b;c;q,z)$ in \eqref{W652tran}, see \cite[p.~23]{vandeBultRains09}.
See especially \cite[Figure 2]{vandeBultRains09} 
for a summary of the entire hierarchy, which 
follows from their analysis of the $m=1$ limits 
of functions associated with vertices of various polytopes.
\end{rem}

\begin{rem}
The function appearing in \eqref{W325tran} 
\[
A_q(z):=\sum_{n=0}^\infty \frac{q^{n^2}(-z)^n}{(q;q)_n}=\qphyp001{-}{-}{q,-qz},
\] 
\cite[p.~57]{RamanujanLostNotebook} is what Ismail and collaborators refer to as the {\it Ramanujan function}. This function appears in the Plancherel--Rotach asymptotics for the Al-Salam--Chihara polynomials, see Dai, Ismail \& Wang (2020) \cite{DaiIsmailWang2020}.
The second representation of the Ramanujan function is given in Gessel \& Stanton (1983)
\cite[(7.6)]{GesselStanton83}.
\end{rem}

\section{Bilateral basic hypergeometric series}
\label{sec:bil}

For (ordinary) bilateral hypergeometric series, denoted as $_rH_s$ series,
we refer the reader to the classic textbook by Slater \cite{Slater66}.
A bilateral basic hypergeometric series (cf.~\cite[Ch.~5]{GaspRah}) is defined as 
\begin{equation}
\qpsi{r}{s}{a_1,...,a_r}
{b_1,...,b_s}
{q,z}
:=\sum_{k=-\infty}^\infty
\frac{(a_1,...,a_r;q)_k}
{(b_1,...,b_s;q)_k}
\left((-1)^kq^{\binom k2}\right)^{s-r}
z^k,
\label{bbhs}
\end{equation}
where $b_1,\ldots,b_s\notin q^{-\mathbb N_0}$.
This series converges for 
$s\ge r$ provided that $|b_1\cdots b_s|<|a_1\cdots a_rz|$, and also in the case when $s=r$, $|z|<1$. If $s>r$ and there are no vanishing numerator elements, then the bilateral series represents 
a holomorphic function on $z\in{\mathbb C}\setminus\{0\}$ (with essential singularity at $z=0$).
%entire function of its argument. 
%the parameters. 
%For unilateral and bilateral hypergeometric series, we refer to these series as {\it well-poised} if 
%the parameters satisfy the relations
%\[
%qa_1=b_1a_2=b_2a_3=\cdots=b_ra_{r}.
%\]
%It is {\it very-well-poised} if in addition, 
%$\{a_2,a_3\}=\pm q\sqrt{a_1}$. 
Define the nonterminating {\it very-well-poised} 
bilateral basic hypergeometric series
${}_{r}\Psi_r$ \cite[(2.1.11)]{GaspRah}
\begin{equation}
\label{rPsir}
\Wpsi{r}{r}{a}{a_3,\ldots,a_r}{q,z}
%={}_{r}\Psi_r(a;a_3,\ldots,a_{r};q,z)
:=\qpsi{r}{r}{\pm q\sqrt{a},a_3,\ldots,a_{r}}
{\pm \sqrt{a},\frac{qa}{a_3},\ldots,\frac{qa}{a_{r}}}{q,z},
\end{equation} 
where $\pm\sqrt{a},\frac{qa}{a_3},\ldots,\frac{qa}{a_{r}}\not\in q^{-\mathbb N_0}$. 

\medskip 
We will use the following notation %${}_{r+1}\phi_s^m$
$\nqppsi{r}{s}{m}$, 
$m\ge r-s$,
(an extension of the notation of van de Bult--Rains
\cite[p.~4]{vandeBultRains09}), 
for bilateral basic hypergeometric series when some denominator elements are equal to zero.
Let $p\in\mathbb N_0$. Then define
\begin{equation}\label{bbotzero}
{}_{r}\psi_s^{\,p}\left(\begin{array}{c}{\bf a}\\
{\bf b}\end{array};q,z\right)
:=
%\rphisx{r+1}{s+p}{{\bf a}\\ 
\qpsi{r}{s+p}{\bf a}%\\ 
{{\bf b},%\underbrace{0,\ldots,0}_{p}
\overbrace{0,\ldots,0}^{p}}{q,z},
\end{equation}
where ${\bf a}:=\{a_1,\ldots,a_r\}$, ${\bf b}:=\{b_1,\ldots,b_s\}$,
$b_1,\ldots,b_s\not\in q^{-\mathbb N_0}\cup\{0\}$,
and
$
\nlqppsi{r}{s}{0}
%{}_{r+1}\phi_s^0
%=\rphisx{r+1}{s}{}{}
={}_{r}\psi_{s}
.$
Note that the 
corresponding nonterminating bilateral basic hypergeometric series ${}_r\psi_s$ \eqref{bbhs} with numerator elements equal to zero is divergent, since the condition $|b_1\cdots b_s|<|a_1\cdots a_r z|$ is impossible to satisfy.
The nonterminating bilateral basic hypergeometric series 
$\nlqppsi{r}{s}{m}
({\bf a};{\bf b};q,z)$, ${\bf a}:=\{a_1,\ldots,a_{r}\}$,
${\bf b}:=\{b_1,\ldots,b_s\}$, is well-defined for $s-r+m\ge 0$. 
Note that we will move interchangeably between the
van de Bult--Rains notation and the alternative
notation with vanishing numerator and denominator elements
which are used on the right-hand side of  \eqref{bbotzero}.

Let $p\in\N_0$.
Further, consider the nonterminating very-well-poised 
bilateral basic hypergeometric series with vanishing denominator elements
${}_{r}\Psi_r^p$ \cite[p.~4]{vandeBultRains09}
\begin{equation}
\label{rpPsir}
\qpWpsi{r}{r}{p}{a}{a_3,\ldots,a_r}{q,z}
%{}_{r}\Psi_r^p(a;a_3,\ldots,a_{r};q,z)
:=\qppsi{r}{r}{p}{\pm q\sqrt{a},a_3,\ldots,a_{r}}
{\pm \sqrt{a},\frac{qa}{a_3},\ldots,\frac{qa}{a_{r}}}{q,z},
\end{equation} 
where $\pm\sqrt{a},\frac{qa}{a_3},\ldots,\frac{qa}{a_{r}}\not\in q^{-\mathbb N_0}$. Recognize that ${}_{r}\Psi_r^0={}_{r}\Psi_r$. Note that in the case where $p<0$, then the nonterminating bilateral basic very-well-poised hypergeometric series ${}_{r}\Psi_r^p$ is divergent since for the 
corresponding nonterminating bilateral basic hypergeometric series ${}_r\psi_s$ \eqref{bbhs}, the condition $|b_1\cdots b_s|<|a_1\cdots a_r z|$ is impossible to satisfy.
Therefore, we only consider the nonterminating case ${}_{r}\Psi_r^p$ with $p\ge 0$.
We now present some important properties of 
bilateral basic hypergeometric series.

\subsection{Bilateral basic hypergeometric summations}

First there is Ramanujan's ${}_1\psi_1$ summation 
\cite[\href{http://dlmf.nist.gov/17.8.E2}{(17.8.2)}]{NIST:DLMF}
\begin{equation}
\qpsi11{a}{b}{q,z}=\frac{(q,\frac{b}{a},az,\frac{q}{az};q)_\infty}{(b,\frac{q}{a},z,\frac{b}{az};q)_\infty},
\label{Rama1psi1}
\end{equation}
where $\big|\frac ba\big|<|z|<1$. 
%We have Ramanujan's ${}_1\psi_1$ summation
%\cite[(17.8.2)]{NIST:DLMF}
%\begin{equation}
%\qpsi11{a}{b}{q,z}=\frac{(q,\frac{b}{a},az,\frac{q}{az};q)_\infty}{(b,\frac{q}{a},z,\frac{b}{az};q)_\infty}.
%\end{equation}
We further have Bailey's bilateral ${}_6\psi_6$ summation 
\cite[\href{http://dlmf.nist.gov/17.8.E7}{(17.8.7)}]{NIST:DLMF}
\begin{equation}
\Wpsi66{a}{b,c,d,e}{q,\frac{qa^2}{bcde}}=\qpsi66{\pm q\sqrt{a},b,c,d,e}{\pm\sqrt{a},\frac{qa}{b},\frac{qa}{c},\frac{qa}{d},\frac{qa}{e}}{q,\frac{qa^2}{bcde}}=
\frac{(q,qa,\frac{q}{a},\frac{qa}{bc},\frac{qa}{bd},\frac{qa}{be},\frac{qa}{cd},\frac{qa}{ce},\frac{qa}{de};q)_\infty}{(\frac{q}{b},\frac{q}{c},\frac{q}{d},\frac{q}{e},\frac{qa}{b},\frac{qa}{c},\frac{qa}{d},\frac{qa}{e},\frac{qa^2}{bcde};q)_\infty},
\label{eq:6psi6}
\end{equation}
where $|qa^2|<|bcde|$.

\subsection{Several known bilateral basic hypergeometric transformations}

There are several important transformations of bilateral basic hypergeometric series which we will need.
The first is (cf.~\cite[\href{http://dlmf.nist.gov/17.10.E1}{(17.10.1)}]{NIST:DLMF})
\begin{equation}
\qpsi22{a,b}{c,d}{q,z}=\frac{(\frac{c}{a},\frac{d}{b},az,\frac{qc}{abz};q)_\infty}{(c,\frac{q}{b},z,\frac{cd}{abz};q)_\infty}\qpsi22{a,\frac{abz}{c}}{d,az}{q,\frac{c}{a}},
\label{qpsi22trans}
\end{equation}
valid for $\big|\frac{cd}{ab}\big|<|z|<1$ and $\big|\frac {cd}{ab}\big|<\big|\frac ca\big|<1$.
The next is given in Rosengren \cite[p.~366]{Rosengren2005}
\begin{equation}
\qpsi22{a,b}{c,d}{q,z}=
\frac{(q,b,\frac{c}{a};q)_\infty\vt(az
%,\frac{q}{az}
;q)
%_\infty
}{(\frac{q}{a},c,d,z,\frac{c}{az};q)_\infty}\qhyp21{z,\frac{d}{b}}{\frac{qaz}{c}}{q,\frac{qb}{c}}+
\frac{(q,\frac{q}{c},\frac{d}{b};q)_\infty\vt(\frac{abz}{c}
%,\frac{qc}{abz}
;q)
%_\infty
}{(d,\frac{q}{a},\frac{q}{b},\frac{az}{c},\frac{cd}{abz};q)_\infty}\qhyp21{\frac{c}{a},\frac{cd}{abz}}{\frac{qc}{az}}{q,\frac{qb}{c}},
\end{equation}
where $|qb|<|c|$, $|cd|<|abz|$, $|z|<1$.
Another important transformation for bilateral basic hypergeometric series is 
Bailey's ${}_6\Psi_6^2$ (sometimes referred as ${}_6\psi_8$) to ${}_2\psi_2$  transformation
cf.~\cite[Exercise 5.11]{GaspRah}
\begin{eqnarray}
&&\hspace{-4.5cm}\qpWpsi662{a}{b,c,d,e}{q,\frac{q^2a^3}{bcde}}
=
\frac{(qa,\frac{q}{a},\frac{qa}{bc},\frac{qa}{de};q)_\infty}{(\frac{q}{b},\frac{q}{c},\frac{qa}{d},\frac{qa}{e};q)_\infty}
\qpsi22{d,e}{\frac{qa}{b},\frac{qa}{c}}{q,\frac{qa}{de}},
\label{Bailey}
\end{eqnarray}
where $|qa|<|bc|$ and $|qa|<|de|$.
This identity was used in \cite{Schlosserbilateral}, in conjunction with the Jacobi triple product identity \eqref{tfdef}, to deduce a number of bilateral identities of the Rogers--Ramanujan type.
In this paper we would like to consider generalizations and repercussions of the important bilateral transformation \eqref{Bailey}. Note that combining \eqref{qpsi22trans} and \eqref{Bailey} gives many transformations of the ${}_6\Psi_6^2$ (or ${}_6\psi_8$).

\subsection{Transformations for the ${}_{10}\Psi_{10}$}

An important transformation for the very-well-poised
${}_{10}\Psi_{10}$ can be obtained by evaluating Slater's \cite{Slater52}   ${}_{2r}\psi_{2r}$  bilateral transformation for $r=5$ (see \cite[(5.5.2)]{GaspRah}). First define the multiset ${\mathbf{b}}:=\{b,c,d,e,f,g,h,k\}$. Then one has
\begin{eqnarray}
&&\hspace{-0.7cm}
\Wpsi{10}{10}{a}{\mathbf{b}}{q,\frac{q^3a^4}{bcdefghk}}:=
\qpsi{10}{10}{\pm q\sqrt{a},\mathbf{b}}{\pm\sqrt{a},\frac{qa}{\mathbf{b}}}{q,\frac{q^3a^4}{bcdefghk}}\nonumber\\[-0.1cm]
&&\hspace{0.3cm}=
\frac{\vt(a;q)}{(1-a)(\frac{q}{\mathbf{b}},\frac{qa}{\mathbf{b}};q)_\infty}\II{
\lambda;\mu,\nu}\frac{(1-\frac{\lambda^2}{a})\vt(\mu,\nu,\frac{\mu}{a},\frac{\nu}{a};q)(\frac{q\lambda}{\mathbf{b}},\frac{qa}{\lambda\mathbf{b}};q)_\infty}{\vt(\frac{\mu}{\lambda},\frac{\nu}{\lambda},\frac{\lambda\mu}{a},\frac{\lambda\nu}{a},\frac{\lambda^2}{a};q)}\Wpsi{10}{10}{\frac{\lambda^2}{a}}{\frac{\lambda \mathbf{b}}{a}}{q,\frac{q^3a^4}{bcdefghk}},
\label{psi101010Psi10}
\end{eqnarray}
provided $|q^3a^4|<|bcdefghk|$, where $\lambda,\mu,\nu$ are free parameters. 
By setting $(\lambda,\mu,\nu)=(g,h,k)$ in \eqref{psi101010Psi10}, one obtains the more-frequently encountered transformation for the very-well-poised ${}_{10}\Psi_{10}$, namely
(cf.~\cite[\href{http://dlmf.nist.gov/17.10.E6}{(17.10.6)}]{NIST:DLMF}, \cite[(III.40)]{GaspRah})
\begin{eqnarray}
&&\hspace{-0.5cm}
\Wpsi{10}{10}{a}{b,c,d,e,f,g,h,k}{q,\frac{q^3a^4}{bcdefghk}}=
\qpsi{10}{10}{\pm q\sqrt{a},b,c,d,e,f,g,h,k}{\pm\sqrt{a},\frac{qa}{b},
\frac{qa}{c},\frac{qa}{d},\frac{qa}{e},\frac{qa}{f},\frac{qa}{g},\frac{qa}{h},\frac{qa}{k}}{q,\frac{q^3a^4}{bcdefghk}}
\nonumber\\[-0.1cm]
&&\hspace{0.0cm}=
\frac{(q,qa,\frac{q}{a};q)_\infty}{(\frac{q}{b},\frac{q}{c},\frac{q}{d},\frac{q}{e},\frac{q}{f},\frac{qa}{b},\frac{qa}{c},\frac{qa}{d},\frac{qa}{e},\frac{qa}{f};q)_\infty}\II{g;h,k}
\frac{(h,k,\frac{h}{a},\frac{k}{a},\frac{qg}{b},\frac{qg}{c},\frac{qg}{d},\frac{qg}{e},\frac{qg}{f},\frac{qa}{bg},\frac{qa}{cg},\frac{qa}{dg},\frac{qa}{eg},\frac{qa}{fg};q)_\infty}{(\frac{q}{g},\frac{h}{g},\frac{k}{g},\frac{gh}{a},\frac{gk}{a},\frac{qa}{g},\frac{qg^2}{a};q)_\infty}\nonumber\\*[0.1cm]
&&\hspace{1.5cm}\times
\Whyp{10}{9}{\frac{g^2}{a}}{\frac{bg}{a},\frac{cg}{a},\frac{dg}{a},\frac{eg}{a},\frac{fg}{a},\frac{gh}{a},\frac{gk}{a}}{q,\frac{q^3a^4}{bcdefghk}},
\label{psi101010W9}
\end{eqnarray}
provided $|q^3a^4|<|bcdefghk|$. 
%The transformation formula for the ${}_{10}\Psi_{10}$ presented above is in fact the corrected form of \href{http://dlmf.nist.gov/17.10.E6}{(17.10.6)} in DLMF where now the arguments of the respective ${}_{10}\Psi_{10}$ series contain  the factor $q^3$ instead of $q^2$ in the numerator, and a missing denominator element is inserted in the infinite $q$-shifted factorials given by $k/g$. (These corrections appeared in DLMF version 1.2.4 \cite{NIST:DLMF}.)
The above transformation can be seen to be a generalization of \eqref{psi888W7} %Theorem~\ref{thm31} 
below, obtained by setting $k=qa/h$. %Therefore, this transformation does not provide any new information about the very-well-poised ${}_8\Psi_8$, which we will discuss in the next subsection. On the other hand 
Slater's very-well-poised ${}_{10}\Psi_{10}$ transformation formula can principally be utilized,
by taking limits of the numerator parameters $b,c,d,e,f$ to infinity, to obtain transformations for the ${}_9\Psi_9^1$, ${}_8\Psi_8^2$, ${}_7\Psi_7^3$, ${}_6\Psi_6^4$, ${}_5\Psi_5^5$ functions. However, these functions are not included in the present study.

\begin{rem}It's important to recognize that even though Slater's very-well-poised transformations \eqref{psi101010Psi10}, \eqref{psi101010W9} are four-term bilateral transformations, they are \textit{not} bilateral extensions of Bailey's four-term very-well-poised balanced ${}_{10}W_9$ transformations \cite[(III.39) and Ex.~2.30]{GaspRah} and also do \textit{not}
contain Bailey's transformation of an $_8W_7$ series in terms of two balanced $_4\phi_3$ series  \cite[(III.36)]{GaspRah}
or in terms of other $_8W_7$ series  \cite[(III.37)]{GaspRah}. \label{remSl}
%For instance if one sets $k=a$ then the left-hand side becomes a non-balanced ${}_{10}W_9$ with argument $q^2a^3/bcdefgh$ and on the right-hand side, in general the series remain bilateral unless one sets either $\lambda,\mu,\nu$ to $a$ (to truncate the series from below) or one of the upper parameters to $1$
%(to truncate the series from above). In this case the resulting transformation becomes an identity involving four unilateral
%$_{10}\phi_9$ series that are not balanced yet. If one sets $k=q^2a^4/bcdefgh$ in additionthen on both  sides, the resulting very-well-poised bilateral basic hypergeometric series with eight free parameters are balanced with argument $q$, but the series aren't unilateral.
\end{rem}

\subsection{Transformations for the ${}_8\Psi_8$ series}
In the following we turn to transformations involving ${}_8\Psi_8$ series. In addition to stating Slater's transformation
of ${}_8\Psi_8$ series in \eqref{psi888Psi8}
(which is a special case of \eqref{psi101010Psi10}), our focus is on rather recently
obtained transformations of ${}_8\Psi_8$ series (by Zhang and Zhang~\cite{ZhangZhang2007},
and by Wei and Yu~\cite{WeiYu2021}) that contain the Bailey transformations \cite[(III.36)]{GaspRah} 
and \cite[(III.37)]{GaspRah} mentioned in Remark~\ref{remSl}. These will be reproduced in Theorems~\ref{thm22} and \ref{thmwy},
together with some nice specializations, given as corollaries, that have not been explicitly stated before.

\begin{rem}
We would like to emphasize the difficulty in extending (and possibly even unifying) the results that
are featured in  Theorems~\ref{thm22} and \ref{thmwy} to the ${}_{10}\psi_{10}$ level.
In particular, Zhang and Zhang~\cite{ZhangZhang2007} obtained their result by
making use of Bailey's nonterminating ${}_{10}W_9$ transformation and applying a bilateralization method
(that necessarily
reduces the order of the basic hypergeometric series), while Wei and Yu~\cite{WeiYu2021} used Ismail's
analytic continuation argument from \cite{Ismail77} to lift a three-term transformation of
$_8\phi_7$ series to one that involves two ${}_8\Psi_8$ series and one $_8\phi_7$ series.
While Ismail's argument has proven to be powerful in many cases,
the argument can not be applied in the special setting of \textit{balanced} series
(such as in the series occurring in Bailey's four-term ${}_{10}W_9$ transformation).
While a bilateral series, summed over integers $j$, containing in its summands
the factors $\frac{(\alpha z^{-1};q)_j}{(\beta z;q)_j}z^j$,
can very well be analytic in the variable $z$ (an argument used by Askey and Ismail~\cite{AskeyIsmail79} to extend
Rogers' $_6\phi_5$ summation~\cite[(II.20)]{GaspRah} to Bailey's $_6\psi_6$ summation~\eqref{eq:6psi6}),
a series over $j$ containing the well-poised factors
$$\frac{(\alpha z,\frac{\alpha}z;q)_j}{(\beta z,\frac{\beta}z;q)_j}$$
is \textit{not} analytic in $z$.
Thus, there is unfortunately no clear direct route to obtaining transformations for ${}_{10}\psi_{10}$ series
that would contain the results given in Theorems~\ref{thm22} and \ref{thmwy}.
\end{rem}

An important transformation for the very-well-poised
${}_{8}\Psi_{8}$ can be obtained by evaluating \cite[(5.5.2)]{GaspRah} for $r=4$. Let $\lambda,\mu,a,b,c,d,e,f\in\CCast$ and define the multiset ${\mathbf{b}}:=\{b,c,d,e,f,g\}$. Then one has the following transformation of an ${}_8\Psi_8$ as a symmetric sum of two very-well-poised ${}_8\Psi_8$'s, namely 
\begin{eqnarray}
&&\hspace{-0.7cm}
\Wpsi{8}{8}{a}{\mathbf{b}}{q,\frac{q^2a^3}{bcdefg}}:=
\qpsi{8}{8}{\pm q\sqrt{a},\mathbf{b}}{\pm\sqrt{a},\frac{qa}{\mathbf{b}}}{q,\frac{q^2a^3}{bcdefg}}\nonumber\\[-0.1cm]
&&\hspace{0.3cm}=
\frac{\vt(a;q)}{(1-a)\vt(\frac{\lambda\mu}{a};q)(\frac{q}{\mathbf{b}},\frac{qa}{\mathbf{b}};q)_\infty}\II{
\lambda;\mu}\frac{(1-\frac{\lambda^2}{a})\vt(\mu,\frac{\mu}{a};q)(\frac{q\lambda}{\mathbf{b}},\frac{qa}{\lambda\mathbf{b}};q)_\infty}{\vt(\frac{\mu}{\lambda},\frac{\lambda^2}{a};q)}\Wpsi{8}{8}{\frac{\lambda^2}{a}}{\frac{\lambda \mathbf{b}}{a}}{q,\frac{q^2a^3}{bcdefg}},
\label{psi888Psi8}
\end{eqnarray}
provided $|q^2a^3|<|bcdefg|$, where $\lambda,\mu$ are free parameters.
By setting $(\lambda,\mu)=(f,g)$ in \eqref{psi888Psi8}, one obtains the more-frequently encountered transformation for the very-well-poised ${}_{8}\Psi_{8}$, namely \cite[\href{http://dlmf.nist.gov/17.10.E5}{(17.10.5)}]{NIST:DLMF}, \cite[(III.38)]{GaspRah}, 
a symmetric sum of two very-well-poised ${}_8W_7$'s, namely 
%\noindent A fundamental result in terms of a transformation of a very-well-poised ${}_8\Psi_8$ is given by
%\begin{thm}
%\label{thm31}
%Let $0<|q|<1$, $a,b,c,d,e,f,g\in\CCast$ such that $|q^2a^3|<|bcdefg|$. 
%One has the following transformation for a bilateral very-well-poised basic ${}_8\Psi_8$ series in terms of 
\begin{eqnarray}
&&\hspace{-0.5cm}
\Wpsi88{a}{\mathbf b}{q,\frac{q^2a^3}{bcdefg}}
%:=
%\qpsi88{\pm q\sqrt{a},b,c,d,e,f,g}{\pm\sqrt{a},\frac{qa}{b},
%\frac{qa}{c},\frac{qa}{d},\frac{qa}{e},\frac{qa}{f},\frac{qa}{g}}{q,\frac{q^2a^3}{bcdefg}}
%\nonumber\\
%&&\hspace{0.0cm}
=\frac{(q,qa,\frac{q}{a};q)_\infty}{(\frac{q}{b},\frac{q}{c},\frac{q}{d},\frac{q}{e},\frac{qa}{b},\frac{qa}{c},\frac{qa}{d},\frac{qa}{e},\frac{fg}{a};q)_\infty}
\nonumber\\*
&&\hspace{1.0cm}\times
\II{f;g}
\frac{(g,\frac{g}{a},\frac{qf}{b},\frac{qf}{c},\frac{qf}{d},\frac{qf}{e},\frac{qa}{bf},\frac{qa}{cf},\frac{qa}{df},\frac{qa}{ef};q)_\infty}{(\frac{q}{f},\frac{g}{f},\frac{qa}{f},\frac{qf^2}{a};q)_\infty}\Whyp87{\frac{f^2}{a}}{\frac{bf}{a},\frac{cf}{a},\frac{df}{a},\frac{ef}{a},\frac{fg}{a}}{q,\frac{q^2a^3}{bcdefg}}.
\label{psi888W7}
\end{eqnarray}
%\end{thm}
\noindent Note that the transformations \eqref{psi888Psi8}, \eqref{psi888W7} are invariant under
parameter interchange of the six variables $b,c,d,e,f,g$.
%\begin{proof}[Proof of Theorem~\ref{thm31}.]
For a direct proof of \eqref{psi888W7}, see \cite[\S5.6]{GaspRah}.
%\end{proof}

\medskip
\noindent As mentioned in Section \ref{nontermbhssec}, we would like to consider generalizations of \eqref{Bailey}. In order to accomplish this, we now examine Zhang and Zhang's transformation \cite[Theorem 6]{ZhangZhang2007} which is 
is a transformation for a very-well-poised ${}_8\Psi_8$ series that constitutes a quite notable
generalization of Bailey's transformation \eqref{Baileytran}. However,
the result of Zhang--Zhang is incorrectly stated in their paper; we provide a correction to it,
and provide it in a form such that the $_8\Psi_8$ series is manifestly symmetric
in six of its parameters, which will be the contents of Theorem~\ref{thm22}.

First we correct the Zhang--Zhang result \cite[Theorem 6]{ZhangZhang2007}. Specifically,
the first balanced ${}_4\phi_3$ in their formula must be replaced by the 
${}_4\phi_3(b,c,d,e;\frac{qa}{f},\frac{qa}{g},\frac{qa}{h};q,q)$ series, which gives
\begin{eqnarray}
&&\hspace{-0.1cm}\qpsi44{a,f,g,h}{\frac{qa}{b},\frac{qa}{c},\frac{qa}{d},\frac{qa}{e}}{q,q}=a\frac{(q,\frac{b}{a},\frac{c}{a},\frac{d}{a},\frac{e}{a},\frac{qa}{f},\frac{qa}{g},\frac{qa}{h};q)_\infty}{(\frac{q}{a},\frac{q}{f},\frac{q}{g},\frac{q}{h},b,c,d,e;q)_\infty}\qhyp43{b,c,d,e}{\frac{qa}{f},\frac{qa}{g},\frac{qa}{h}}{q,q}\nonumber\\*
&&\hspace{3.8cm}+\frac{b}{a}\frac{(q,a,f,g,h,\frac{qb}{c},\frac{qb}{d},\frac{qb}{e};q)_\infty}{(b,\frac{qa}{b},\frac{qa}{c},\frac{qa}{d},\frac{qa}{e},\frac{bf}{a},\frac{bg}{a},\frac{bh}{a};q)_\infty}\qhyp43{b,\frac{bf}{a},\frac{bg}{a},\frac{bh}{a}}{\frac{qb}{c},\frac{qb}{d},\frac{qb}{e}}{q,q}\nonumber\\*
&&\hspace{3.8cm}+\frac{(a,\frac{b}{a},\frac{qa}{\lambda c},\frac{qa}{\lambda d},\frac{qa}{\lambda e},\frac{q\lambda}{f},\frac{q\lambda}{g},\frac{q\lambda}{h};q)_\infty}{(q\lambda,\frac{q}{\lambda},c,d,e,\frac{bf}{a},\frac{bg}{a},\frac{bh}{a};q)_\infty}\qpsi88{\pm q\sqrt{\lambda},\frac{\lambda c}{a},
\frac{\lambda d}{a},\frac{\lambda e}{a},f,g,h}
{\pm\sqrt{\lambda},\frac{qa}{c},\frac{qa}{d},\frac{qa}{e},\frac{q\lambda}{f},\frac{q\lambda}{g},\frac{q\lambda}{h}}{q,b},
\label{eq:ZZc}
\end{eqnarray}
where $\lambda=\frac{qa^2}{cde}$, $h=\frac{q^2a^3}{bcdefg}$.
Now, with this correction, writing out $\lambda$ and $h$ by their defining relations, and
making the replacement
\[
\{a,b,c,d,e,f,g\}\mapsto\left\{\frac{qa^2}{bcd},\frac{q^2a^3}{bcdefg},\frac{qa}{cd},\frac{qa}{bd},\frac{qa}{bc},e,f\right\},
\]
the corrected result can be restated (now with the very-well-poised ${}_8\psi_8$ series on one side alone) as follows.

\begin{thm}{Zhang and Zhang (2007) \cite[Theorem 6]{ZhangZhang2007}.}
\label{thm22}
Let $0<|q|<1$, $a,b,c,d,e,f,g\in\CCast$ such that
$|q^2a^3|<|bcdefg|$ such that none of the denominator factors vanish. Then one has the following transformation formula for a very-well-poised ${}_8\Psi_8$, namely 
\begin{eqnarray}
&&\hspace{-1.0cm}\Wpsi88{a}{b,c,d,e,f,g}{q,\frac{q^2a^3}{bcdefg}}=\qpsi88{\pm q\sqrt{a},b,c,d,e,f,g}{\pm\sqrt{a},\frac{qa}{b},
\frac{qa}{c},\frac{qa}{d},\frac{qa}{e},\frac{qa}{f},\frac{qa}{g}}{q,\frac{q^2a^3}{bcdefg}}\nonumber\\
&&\hspace{0.0cm}=
\frac{(qa,\frac{q}{a},\frac{qa}{bc},\frac{qa}{bd},\frac{qa}{cd},\frac{qa}{ef},\frac{qa}{eg},\frac{qa}{fg};q)_\infty}
{(\frac{q}{b},\frac{q}{c},\frac{q}{d},\frac{qa}{e},\frac{qa}{f},\frac{qa}{g},\frac{qa^2}{bcd},\frac{qa}{efg};q)_\infty}
\qpsi44{e,f,g,\frac{qa^2}{bcd}}{\frac{qa}{b},\frac{qa}{c},\frac{qa}{d},\frac{efg}{a}}{q,q}\nonumber\\*
&&\hspace{0.5cm}+\frac{(q,qa,\frac{q}{a},\frac{b}{a},\frac{c}{a},\frac{d}{a},\frac{qa}{ef},\frac{qa}{eg},\frac{qa}{fg},\frac{q^2a^2}{bcde},\frac{q^2a^2}{bcdf},\frac{q^2a^2}{bcdg};q)_\infty}
{(\frac{q}{b},\frac{q}{c},\frac{q}{d},\frac{q}{e},\frac{q}{f},\frac{q}{g},\frac{qa}{e},\frac{qa}{f},\frac{qa}{g},\frac{bcd}{qa^2},\frac{q^2a^2}{bcd},\frac{q^2a^3}{bcdefg};q)_\infty}
\qhyp43{\frac{qa}{bc},\frac{qa}{bd},\frac{qa}{cd},\frac{q^2a^3}{bcdefg}}{\frac{q^2a^2}{bcde},\frac{q^2a^2}{bcdf},\frac{q^2a^2}{bcdg}}{q,q}\nonumber\\*
&&\hspace{0.5cm}+\frac{(q,qa,\frac{q}{a},e,f,g,\frac{qa}{bc},\frac{qa}{bd},\frac{qa}{cd},\frac{q^2a^2}{befg},\frac{q^2a^2}{cefg},\frac{q^2a^2}{defg};q)_\infty}{(\frac{q}{b},\frac{q}{c},\frac{q}{d},\frac{qa}{b},\frac{qa}{c},\frac{qa}{d},\frac{qa}{e},\frac{qa}{f},\frac{qa}{g},\frac{efg}{qa},\frac{q^2a}{efg},\frac{q^2a^3}{bcdefg};q)_\infty}\qhyp43{\frac{qa}{ef},
\frac{qa}{eg},\frac{qa}{fg},\frac{q^2a^3}{bcdefg}}{\frac{q^2a^2}{befg},\frac{q^2a^2}{cefg},\frac{q^2a^2}{defg}}{q,q}.
\label{8psi84psi4}
\end{eqnarray}
\end{thm}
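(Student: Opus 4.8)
The plan is to obtain Theorem~\ref{thm22} as a direct corollary of the corrected Zhang--Zhang identity \eqref{eq:ZZc}, which the authors have already stated and validated in the text preceding the theorem. The entire content of the theorem is therefore purely bookkeeping: one starts from \eqref{eq:ZZc}, substitutes the defining relations $\lambda=\frac{qa^2}{cde}$ and $h=\frac{q^2a^3}{bcdefg}$ everywhere they occur, and then performs the explicit parameter substitution
\[
\{a,b,c,d,e,f,g\}\mapsto\Bigl\{\tfrac{qa^2}{bcd},\tfrac{q^2a^3}{bcdefg},\tfrac{qa}{cd},\tfrac{qa}{bd},\tfrac{qa}{bc},e,f\Bigr\}
\]
indicated in the text. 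First I would rewrite \eqref{eq:ZZc} so that the lone $_8\psi_8$ series sits on one side of the equation; this just means moving the two finite $_4\phi_3$ terms to the opposite side and leaving the bilateral series alone, which accounts for the structural form ``$_8\Psi_8$ series on one side alone'' mentioned in the remark. The remaining work is to push the substitution through each $q$-shifted factorial in the three prefactors and through all the numerator and denominator entries of the four hypergeometric pieces.

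The key steps, in order, are as follows. Step one: verify that under the substitution the new ``$a$'' parameter of the target $_8\Psi_8$ becomes exactly $a$ and that the six symmetric slots become $b,c,d,e,f,g$, so that the left-hand side of \eqref{8psi84psi4} is indeed the very-well-poised $_8\Psi_8(a;b,c,d,e,f,g;q,\tfrac{q^2a^3}{bcdefg})$ with the stated argument; this is the one check that confirms the target identity has the claimed shape. Step two: compute the images of the three infinite-product prefactors, matching each factor in the image against the factors displayed in \eqref{8psi84psi4}; here one repeatedly uses the elementary relations among $q$-shifted factorials, and in particular the reflection-type identities \eqref{poch.id:3} and the reciprocal relation \eqref{aiden} may be needed to bring factors of the form $(x;q)_\infty$ into the normalized orientation shown. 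Step three: transcribe the four upper and three lower entries of each of the two $_4\phi_3$'s and of the $_4\phi_3$ appearing after the surviving $_8\Psi_8$ transformation, confirming termwise that they match the balanced series in the statement; balancedness (the product condition $q\cdot\text{(numerator)}=\text{(denominator)}$) should be checked as an internal consistency test at this stage.

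The main obstacle will be purely organizational rather than conceptual: the substitution is a rational change of the seven free parameters, so each of the roughly two dozen $q$-shifted factorials in \eqref{eq:ZZc} must be simplified and then matched, with no room for sign errors or for mis-tracking which of $b,c,d$ plays which role under the nonsymmetric map above. The genuine mathematical content --- namely the correctness of \eqref{eq:ZZc} itself, obtained by correcting the first $_4\phi_3$ in the original Zhang--Zhang formula --- is assumed, so the proof reduces to the assertion that the two displays are related by the stated invertible substitution. Because every step is reversible, I would present the proof as the single sentence that \eqref{8psi84psi4} is \eqref{eq:ZZc} after writing out $\lambda$ and $h$ and applying the indicated replacement, and verify the convergence condition $|q^2a^3|<|bcdefg|$ transforms consistently with the original hypothesis $|q^2a^3|<|bcdefg|$ of \eqref{eq:ZZc} under the substitution.
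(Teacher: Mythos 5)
Your proposal matches the paper's derivation exactly: Theorem~\ref{thm22} is obtained from the corrected identity \eqref{eq:ZZc} by writing out $\lambda$ and $h$ via their defining relations, applying the stated parameter substitution, and isolating the ${}_8\Psi_8$; the paper offers no further argument beyond this bookkeeping. One small correction to your accounting: \eqref{eq:ZZc} contains \emph{two} bilateral series, so in addition to the two ${}_4\phi_3$ terms you must also transfer the ${}_4\psi_4$ from the left-hand side of \eqref{eq:ZZc} to the right (it becomes the first term of \eqref{8psi84psi4} after division by the relevant prefactor).
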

%\noindent Note that the transformation in Equation \eqref{8psi84psi4}
%is invariant under parameter interchange of the six variables $b,c,d,e,f,g$.
\begin{rem}
Letting $(g,d)\mapsto(q^{-n},q^{-n}a)$ in Theorem~\ref{thm22} followed by replacing $f\mapsto d$, reproduces \cite[\href{http://dlmf.nist.gov/17.10.E3}{(17.10.3)}]{NIST:DLMF},
%, namely
%\begin{eqnarray}
%&&\hspace{-0.80cm}
%\Wpsi88{a}{b,c,d,e,q^{-n}a,q^{-n}}{q,\frac{q^{2n+2}a^2}{bcde}}=\qpsi88{\pm q\sqrt{a},b,c,d,e,q^{-n}%a,q^{-n}}{\pm\sqrt{a},\frac{qa}{b},\frac{qa}{c},\frac{qa}{d},\frac{qa}{e},q^{n+1},q^{n+1}a}{q,\frac{q^{2n+2}a^2}{bcde}}\nonumber\\
%&&\hspace{0.5cm}=
%\frac{(qa,\frac{q}{a},\frac{qa}{bc},\frac{qa}{de};q)_n}{(\frac{q}{b},\frac{q}{c},\frac{qa}{d},\frac{qa}{e};q)_n}\qpsi44{d,e,q^{-n},\frac{q^{n+1}a}{bc}}{\frac{qa}{b},\frac{qa}{c},q^{n+1},\frac{q^{-n}de}{a}}{q,q}\!,
%\end{eqnarray}
which, as a transformation involving terminating series on both sides,
is equivalent to Watson's transformation~\cite[(III.18)]{GaspRah}.
Instead, by only letting $g\mapsto q^{-n}$ (or $d\mapsto q^{-n}a$) in \eqref{eq:ZZc},
an identity equivalent to Bailey's transformation in \eqref{Baileytran} is obtained.
\end{rem}

If, in \eqref{eq:ZZc}, one substitutes  $(a,b,c,d,e,f,g)\mapsto(q^a,q^b,q^c,q^d,q^e,q^f,q^g)$
and uses the definition of the $q$-gamma function,
then one obtains the following transformation for very-well-poised bilateral ${}_7H_7$ to a ${}_4H_4$ and two nonterminating balanced ${}_4F_3$'s, all series with argument unity.
\begin{thm}
\label{thm337H7}
Let $a,b,c,d,e,f,g\in\CCast$. Then
\begin{eqnarray}
&&\Hhyp77{1\!+\!\frac{a}{2},b,c,d,e,f,g}{\frac{a}{2},1\!+\!a\!-\!b,1\!+\!a\!-\!c,1\!+\!a\!-\!d,1\!+\!a\!-\!e,1\!+\!a\!-\!f,1\!+\!a\!-\!g}{1}
\nonumber\\
&&\hspace{0.0cm}=
\frac{\Gamma(1\!-\!b,1\!-\!c,1\!-\!d,1\!+\!a\!-\!e,1\!+\!a\!-\!f,1\!+\!a\!-\!g)}{\Gamma(1\!+\!a,1\!-\!a)}\Biggl(\frac{\Gamma(1\!+\!2a\!-\!b\!-\!c\!-\!d,1\!+\!a\!-\!e\!-\!f\!-\!g)}{\Gamma(1\!+\!a\!-\!b\!-\!c,1\!+\!a\!-\!b\!-\!d,1\!+\!a\!-\!c\!-\!d,1\!+\!a\!-\!e\!-\!f)}\nonumber\\
&&\hspace{4.15cm}\times\frac{1}{\Gamma(1\!+\!a\!-\!e\!-\!g,1\!+\!a\!-\!f\!-\!g)}\Hhyp44{e,f,g,1\!+\!2a\!-\!b\!-\!c\!-\!d}{1\!+\!a\!-\!b,1\!+\!a\!-\!c,1\!+\!a\!-\!d,-a\!+\!e\!+\!f\!\!+g}{1}\nonumber\\*
&&\hspace{0.0cm}+\frac{\Gamma(1\!-\!e,1\!-\!f,1\!-\!g,\!-\!1\!-\!2a\!+\!b\!+\!c\!+\!d,2\!+\!2a\!-\!b\!-\!c\!-\!d)}{\Gamma(b\!-\!a,c\!-\!a,d\!-\!a,1\!+\!a\!-\!e\!-\!f,1\!+\!a\!-\!e\!-\!g,1\!+\!a\!-\!f\!-\!g,2\!+\!2a\!-\!b\!-\!c\!-\!d\!-\!e,2\!+\!2a\!-\!b\!-\!c\!-\!d\!-\!f)}\nonumber\\
&&\hspace{0.5cm}\times\frac{\Gamma(2\!+\!3a\!-\!b\!-\!c\!-\!d\!-\!e\!-\!f\!-\!g)}{\Gamma(2\!+\!2a\!-\!b\!-\!c\!-\!d\!-\!g)}
\hyp43{1\!+\!a\!-\!b\!-\!c,1\!+\!a\!-\!b\!-\!d,1\!+\!a\!-\!c\!-\!d,2\!+\!3a\!-\!b\!-\!c\!-\!d\!-\!e\!-\!f\!-\!g}{2\!+\!2a\!-\!b\!-\!c\!-\!d\!-\!e,2\!+\!2a\!-\!b\!-\!c\!-\!d\!-\!f,2\!+\!2a\!-\!b\!-\!c\!-\!d\!-\!g}{1}\nonumber\\*
&&\hspace{0.0cm}
+\frac{\Gamma(1\!+\!a\!-\!b,1\!+\!a\!-\!c,1\!+\!a\!-\!d,-1\!-\!a\!+\!e\!+\!f\!+\!g,2\!+\!a\!-\!e\!-\!f\!-\!g,2\!+\!3a\!-\!b\!-\!c\!-\!d\!-\!e\!-\!f\!-\!g)}{\Gamma(e,f,g,1\!+\!a\!-\!b\!-\!c,1\!+\!a\!-\!b\!-\!d,1\!+\!a\!-\!c\!-\!d,2\!+\!2a\!-\!b\!-\!e\!-\!f\!-\!g,2\!+\!2a\!-\!c\!-\!e\!-\!f\!-\!g,2\!+\!2a\!-\!d\!-\!e\!-\!f\!-\!g)}\nonumber\\*
&&\hspace{4.85cm}\times
\hyp43{1\!+\!a\-\!-\!e\!-\!f,1\!+\!a\!-\!e\!-\!g,1\!+\!a\!-\!f\!-\!g,2\!+\!3a\!-\!b\!-\!c\!-\!d\!-\!e\!-\!f\!-\!g}{2\!+\!2a\!-\!b\!-\!e\!-\!f\!-\!g,2\!+\!2a\!-\!c\!-\!e\!-\!f\!-\!g,2\!+\!2a\!-\!d\!-\!e\!-\!f\!-\!g}{1}
\Biggr),\nonumber\\*
\end{eqnarray}
provided $\Re(2+3a-b-c-d-e-f-g)>0$.
\end{thm}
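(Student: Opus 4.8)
The plan is to take the confluent limit $q\to1^{-}$ in the corrected Zhang--Zhang transformation, in the form \eqref{8psi84psi4} of Theorem~\ref{thm22} (equivalently \eqref{eq:ZZc}), after the substitution $(a,b,c,d,e,f,g)\mapsto(q^a,q^b,q^c,q^d,q^e,q^f,q^g)$. The bridge between the two regimes is the $q$-gamma function $\Gamma_q(x):=(q;q)_\infty(1-q)^{1-x}/(q^x;q)_\infty$, which satisfies $\lim_{q\to1^-}\Gamma_q(x)=\Gamma(x)$; equivalently $(q^x;q)_\infty=(q;q)_\infty(1-q)^{1-x}/\Gamma_q(x)$, and at the level of Pochhammer symbols $\lim_{q\to1^-}(q^\alpha;q)_n/(1-q)^n=(\alpha)_n=\Gamma(\alpha+n)/\Gamma(\alpha)$. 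First I would rewrite every infinite product appearing in the three prefactors of \eqref{8psi84psi4} through $\Gamma_q$, and separately analyze the three series.

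For the prefactors the key observation is that each numerator/denominator pair of infinite products is \emph{balanced}: after the substitution the numerator and denominator carry the same number of factors and the sums of their exponents agree (one checks, e.g., that both the numerator and the denominator of the first prefactor have exponent-sum $8+6a-2(b+c+d+e+f+g)$). Consequently the surplus powers of $(q;q)_\infty$ cancel and the residual power of $(1-q)$ vanishes, so each prefactor tends to a finite ratio of ordinary $\Gamma$-functions, reproducing exactly the three Gamma prefactors in the statement. For the series I would pass to the limit term by term: the balancing forces the Gaussian factors $((-1)^nq^{\binom n2})^{s-r}$ (present only off the balanced line) to disappear and the arguments, all equal to $q$ or to $q^{2+3a-b-c-d-e-f-g}$, to tend to $1$, while each ratio $(q^\alpha;q)_n/(q^\beta;q)_n$ tends to $(\alpha)_n/(\beta)_n$; thus the two balanced $\qhyp43{\cdot}{\cdot}{q,q}$'s become the two balanced $\hyp43{\cdot}{\cdot}{1}$'s and the $\qpsi44{\cdot}{\cdot}{q,q}$ becomes the $\Hhyp44{\cdot}{\cdot}{1}$. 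The very-well-poised pair on the left requires one extra step: using $(x;q)_n(-x;q)_n=(x^2;q^2)_n$ one has $(\pm q\sqrt a;q)_n/(\pm\sqrt a;q)_n=(1-aq^{2n})/(1-a)$, which under $a\mapsto q^a$ tends to $(a/2+n)/(a/2)=(1+a/2)_n/(a/2)_n$, explaining the entries $1+\tfrac a2$ over $\tfrac a2$ and confirming that $\Wpsi88{a}{\ldots}{\cdot}$ degenerates into the well-poised $\Hhyp77{\cdot}{\cdot}{1}$.

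The main obstacle is the rigorous justification of the termwise passage to the limit, i.e.\ the interchange of $\lim_{q\to1^-}$ with the (in two cases \emph{bilateral}) summations. I would handle this by dominated convergence: for $q$ in a left-neighbourhood of $1$ one produces an integrable majorant for the summands, and the decisive ingredient is the hypothesis $\Re(2+3a-b-c-d-e-f-g)>0$, which is precisely the limiting form of the convergence condition $|q^2a^3|<|bcdefg|$ of \eqref{8psi84psi4} (since the common argument becomes $q^{2+3a-b-c-d-e-f-g}$, whose modulus is $<1$ for $q<1$ exactly when this real part is positive). This both guarantees convergence of the limiting $\Hhyp77{\cdot}{\cdot}{1}$ and $\Hhyp44{\cdot}{\cdot}{1}$ and supplies the geometric decay needed to dominate the tails uniformly in $q$. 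Once the interchange is licensed, assembling the limits of the prefactors and of the series term by term yields the claimed identity.
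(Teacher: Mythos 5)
Your proposal is correct and follows essentially the same route as the paper's proof: substitute $(a,b,c,d,e,f,g)\mapsto(q^a,q^b,q^c,q^d,q^e,q^f,q^g)$ in the corrected Zhang--Zhang transformation of Theorem~\ref{thm22}, rewrite the infinite $q$-shifted factorials via $\Gamma_q$, and pass to the termwise limit $q\to1^{-}$ using $\lim_{q\to1^-}\Gamma_q(z)=\Gamma(z)$ and $\lim_{q\to1^-}(q^\alpha;q)_k/(1-q)^k=(\alpha)_k$. The paper likewise leaves the rigorous justification of the termwise limit to a majorization argument in the style of Koornwinder (which is exactly your dominated-convergence step based on $\Re(2+3a-b-c-d-e-f-g)>0$), so your additional checks on the balancing of the exponent sums and on the degeneration of the very-well-poised pair are welcome elaborations rather than a different method.
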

\begin{proof}
Starting with the Zhang--Zhang transformation in Theorem~\ref{thm22}, we substitute
\[
(a,b,c,d,e,f,g)\mapsto(q^a,q^b,q^c,q^d,q^e,q^f,q^g)
\]
and rewrite ratios of infinite $q$-shifted factorials using the $q$-gamma function, defined by
\cite[\href{http://dlmf.nist.gov/5.18.E4}{(5.18.4)}]{NIST:DLMF}
\begin{equation}
\Gamma_q(z):=\frac{(q;q)_\infty(1-q)^{1-z}}{(q^z;q)_\infty}.
\end{equation}
Then we take the limit as $q\to1^{-}$ on both sides\footnote{While it is a routine matter to
\textit{formally} take the limit $q\to1^{-}$ (and is useful to \textit{find} the desired result), a rigorous
justification requires an explanation why taking the term-wise limit in the summands of the series is allowed.
This typically requires that the series is majorized by a convergent series not depending on $q$ and follows
the arguments provided by Koornwinder in \cite[Appendix~A]{Koornwinder1990}.}, using 
$\lim_{q\to1^{-}}\Gamma_q(z)=\Gamma(z)$
and applying  \cite[(1.8.4)]{Koekoeketal}
\begin{equation}
\lim_{q\to1^{-}}\frac{(q^\alpha;q)_k}{(1-q)^k}=(\alpha)_k,
\end{equation}
where $(\alpha)_k:=(\alpha)(\alpha+1)\cdots(\alpha+k-1)$ is a rising factorial.
The expression is thus reduced from the respective bilateral and unilateral
basic hypergeometric functions to corresponding ordinary hypergeometric functions, completing the proof
(where we have omitted the technical details pertaining to rigourously justifying the term-wise $q\to1^{-}$ limit).
\end{proof}

\begin{rem}
Note that one can obtain similar transformations of bilateral hypergeometric series by taking the limit as $q\to1^{-}$ of transformations which are derived below. 
However, the details are routine and are omitted.
%However, we will leave these as  exercises to the reader.
\end{rem}

\begin{rem}As noticed in \cite[Corollary 8]{ZhangZhang2007}, setting $c=a$ in Theorem~\ref{thm22} and replacing $g\mapsto c$ produces Bailey's transformation of a nonterminating ${}_8W_7$ in terms of a sum of two balanced nonterminating ${}_4\phi_3$'s with argument $q$ \eqref{Baileytran}.
\end{rem}

\noindent The transformation in Theorem~\ref{thm22} involves series that are manifestly symmetric in $b,c,d$ and,
separately, in $e,f,g$.
Using the simple identity $(z,\omega z,\omega^2 z;q)_k=(z^3;q^3)_k$, where $\omega$ is a fixed primitive third root of unity,
we are able to derive the following cubic summation formula involving a bilateral series:

\begin{cor}\label{corbcubic} Let $0<|q|<1$ and $a,b\in\CCast$. Then
\begin{align}
&\Wpsi88{a}{b^{\frac 13},\omega b^{\frac 13},\omega^2 b^{\frac 13},
q^{\frac 13}ab^{-\frac 13},\omega q^{\frac 13}ab^{-\frac 13},
\omega^2 q^{\frac 13}ab^{-\frac 13}}{q,q}%\notag\\&
=\sum_{k=-\infty}^\infty\frac{(1-aq^{2k})}{(1-a)}\frac{(b,\frac{qa^3}b;q^3)_k}{(q^2b,\frac{q^3a^3}b;q^3)_k}q^k\notag\\
&=\frac{(qa,\frac qa;q)_\infty}{(\frac{q^3}b,q^2b;q^3)_\infty\vt(\frac b{a^2};q)}
\Bigg(\frac{(q^2,q^3,\frac{qb^2}{a^3};q^3)_\infty\,\big(\vt(\frac{b^2}{a^3},\frac{qb}{a^3};q^3)+
\frac ba\,\vt(\frac b{a^3},\frac{qa^3}{b^2};q^3)\big)}{(q,q,\frac{q^2b}{a^3},\frac{q^3a^3}b,\frac{b^2}{a^3};q^3)_\infty}\notag\\
&\qquad\qquad\qquad\qquad\qquad\quad+\frac{\big(\frac{a^2}b\,\vt(\frac{b}{a^3},\frac{qb^2}{a^3};q^3)-
\frac{b}{a^2}\,\vt(\frac{qa^3}b,\frac{a^3}{b^2};q^3)\big)}{(1-\frac{a^3}{b^2})\,(\frac{q^2b}{a^3},\frac{q^3a^3}b;q^3)_\infty}
\qhyp21{\frac{qb^2}{a^3},q^3}{\frac{q^3b^2}{a^3}}{q^3,q}\Bigg),\label{eq:corbcubic}
\end{align}
where $\omega$ is a fixed primitive third root of unity.
\end{cor}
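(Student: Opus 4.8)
The plan is to specialize Theorem~\ref{thm22} by the cubic substitution $(b,c,d)\mapsto(b^{1/3},\omega b^{1/3},\omega^2 b^{1/3})$ and $(e,f,g)\mapsto(q^{1/3}ab^{-1/3},\omega q^{1/3}ab^{-1/3},\omega^2 q^{1/3}ab^{-1/3})$, and then to collapse every resulting product of three $q$-shifted factorials by means of $(z,\omega z,\omega^2 z;q)_k=(z^3;q^3)_k$. Under this substitution $bcd=b$ and $efg=qa^3/b$, hence $bcdefg=qa^3$ and the argument $q^2a^3/(bcdefg)$ becomes $q$; moreover the very-well-poised factor $(\pm q\sqrt a;q)_k/(\pm\sqrt a;q)_k$ equals $(1-aq^{2k})/(1-a)$. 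Thus the left-hand $_8\Psi_8$ turns into the claimed bilateral series $\sum_k\frac{1-aq^{2k}}{1-a}\frac{(b,qa^3/b;q^3)_k}{(q^2b,q^3a^3/b;q^3)_k}q^k$, which establishes the first equality in \eqref{eq:corbcubic}.

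Next I would reduce the three terms on the right of \eqref{8psi84psi4}. In the $_4\Psi_4$ the numerator parameter $qa^2/(bcd)=qa^2/b$ coincides with the denominator parameter $efg/a=qa^2/b$, so these cancel and, after the cubic collapse, the $_4\Psi_4$ becomes the bilateral $\qpsi11{qa^3/b}{q^3a^3/b}{q^3,q}$, which is summed in closed form by Ramanujan's $_1\psi_1$ summation \eqref{Rama1psi1} applied at base $q^3$ with $z=q$ (note $\beta/\alpha=q^2$, so the hypotheses hold for $0<|q|<1$). In each of the two $_4\phi_3$'s the numerator parameter $q^2a^3/(bcdefg)=q$ cancels the factor $(q;q)_k$ present in the definition \eqref{2.11}; the remaining three numerator and three denominator factors then collapse cubically, leaving the unilateral base-$q^3$ series $\qhyp21{q^3a^3/b^2,q^3}{q^5a^3/b^2}{q^3,q}$ (from the $b,c,d$-term) and $\qhyp21{qb^2/a^3,q^3}{q^3b^2/a^3}{q^3,q}$ (from the $e,f,g$-term), the latter being exactly the $_2\phi_1$ that appears in \eqref{eq:corbcubic}.

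The two $_2\phi_1$'s are then linked by the three-term transformation \eqref{2phi1-3term} applied at base $q^3$ with argument $z=q$: transforming the $q^3a^3/b^2$ series produces a pure infinite product times a $_2\phi_1$ whose numerator parameter coincides with its denominator parameter, so that series reduces to $\sum_k(q;q^3)_k/(q^3;q^3)_k\,q^k$ and sums by the $q$-binomial theorem to $(q^2;q^3)_\infty/(q;q^3)_\infty$; the other branch is a second pure product times precisely the surviving $qb^2/a^3$ series. Substituting this back collects the right-hand side into (i) the coefficient multiplying the surviving $_2\phi_1$, built from the prefactor of the $e,f,g$-term together with the second branch of the transformation, and (ii) a purely product/theta part coming from the Ramanujan evaluation together with the first branch of the transformation.

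The remaining work, and the main obstacle, is the bookkeeping of the rather bulky prefactors in \eqref{8psi84psi4} after the cubic specialization and their consolidation into the compact theta expressions of \eqref{eq:corbcubic}. Concretely, I would rewrite ratios of infinite $q$- and $q^3$-shifted factorials as theta functions using \eqref{tfdef} and the quotient identity \eqref{aiden}, identify the two summands $\vt(b^2/a^3,qb/a^3;q^3)$ and $\frac ba\,\vt(b/a^3,qa^3/b^2;q^3)$ in the product/theta part and the two summands $\frac{a^2}{b}\,\vt(b/a^3,qb^2/a^3;q^3)$ and $-\frac{b}{a^2}\,\vt(qa^3/b,a^3/b^2;q^3)$ in the coefficient of the $_2\phi_1$, and finally check that all base-$q$ factors assemble into the global prefactor $\frac{(qa,q/a;q)_\infty}{(q^3/b,q^2b;q^3)_\infty\,\vt(b/a^2;q)}$. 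The theta addition formula \eqref{tfa} is the natural tool for recombining the two-term theta sums that arise, and I expect the tracking of signs and of the $b/a$ versus $a/b$ weights to be the most error-prone part of the computation.
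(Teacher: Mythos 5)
Your proposal follows essentially the same route as the paper's own (sketched) proof: the identical cubic substitution in Theorem~\ref{thm22}, collapse via $(z,\omega z,\omega^2 z;q)_k=(z^3;q^3)_k$, Ramanujan's $_1\psi_1$ summation at base $q^3$ for the $_4\Psi_4$, and the same $(q,a,b,c,z)\mapsto(q^3,\frac{q^3a^3}{b^2},q^3,\frac{q^5a^3}{b^2},q)$ instance of \eqref{2phi1-3term} applied to the first reduced $_2\phi_1$. Your additional observations (the cancellation $qa^2/(bcd)=efg/a$ that turns the $_4\Psi_4$ into a $_1\Psi_1$, and the $q$-binomial evaluation of the first branch of the three-term transformation) are correct and only make explicit what the paper leaves to "simplification."
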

\begin{proof}
We sketch the details.
In \eqref{8psi84psi4}, we make the substitutions
\begin{equation*}
(b,c,d,e,f,g)\mapsto(b^{\frac 13},\omega b^{\frac 13},\omega^2 b^{\frac 13},
q^{\frac 13}ab^{-\frac 13},\omega q^{\frac 13}ab^{-\frac 13},
\omega^2 q^{\frac 13}ab^{-\frac 13}),
\end{equation*}
where $\omega$ is a fixed primitive third root of unity,
and where for $q^{\frac 13}$ and $b^{\frac 13}$
fixed branches of the respective third roots are taken.
Under these substitutions, the argument of the $_8\Psi_8$ series becomes $q$,
the $_4\Psi_4$ reduces to a $_1\Psi_1$ series of base $q^3$,
and the two $_4\phi_3$ series reduce to Fine--type basic hypergeometric series with base $q^3$
(i.e., to series of the form $_2\phi_1(a,q^3;c;q^3,z)$ which can also be viewed
as the half of a bilateral $_1\Psi_1$ series).
The $_1\Psi_1$ series can be simplified by virtue of the $(q,a,b,z)\mapsto(q^3,\frac{qa^3}b,\frac{q^3a^3}b,q)$
case of Ramanujan's $_1\Psi_1$ summation formula \eqref{Rama1psi1}, while to the first
$_2\phi_1$ series we apply the $(q,a,b,c,z)\mapsto(q^3,\frac{q^3a^3}{b^2},q^3,\frac{q^5a^3}{b^2},q)$
case of \eqref{2phi1-3term}. Simplification yields the claimed result.
\end{proof}

\noindent By letting $b=a^3$ in Corollary~\ref{corbcubic} we immediately obtain the following simpler identity
involving unilateral series:
\begin{cor}\label{corcubic} Let $0<|q|<1$ and $a\in\CCast$. Then
\begin{align}
&\Whyp87{a}{\omega a,\omega^2 a,
q^{\frac 13},\omega q^{\frac 13},
\omega^2 q^{\frac 13}}{q,q}=
\sum_{k=0}^\infty\frac{(1-aq^{2k})}{(1-a)}\frac{(q,a^3;q^3)_k}{(q^3,q^2a^3;q^3)_k}q^k\notag\\
&=\frac{1}{(1-a)\,(q^2a^3;q^3)_\infty}\Bigg(\frac{(q^2,qa^3;q^3)_\infty}{(q;q^3)_\infty}
-\frac{a\,(q,q^3a^3;q^3)_\infty}{(q^3;q^3)_\infty}
\qhyp21{qa^3,q^3}{q^3a^3}{q^3,q}\Bigg),\label{eq:corcubic}
\end{align}
where $\omega$ is a fixed primitive third root of unity.
\end{cor}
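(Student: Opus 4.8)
The plan is to specialize Corollary~\ref{corbcubic} at $b=a^3$ and then simplify; the statement already asserts that this is all that is required, so the entire task is to verify that each side collapses as claimed. First I would treat the two series on the left of \eqref{eq:corbcubic}. Setting $b=a^3$ turns the six lower numerator parameters of the ${}_8\Psi_8$ into $a,\omega a,\omega^2 a,q^{1/3},\omega q^{1/3},\omega^2 q^{1/3}$; in particular one numerator parameter becomes the base point $a$, so by the very-well-poised pairing in \eqref{rPsir} the matching denominator parameter is $qa/a=q$. Because the bilateral series \eqref{bbhs} carries no automatic factor $(q;q)_k$, this induced $(q;q)_k$ in the denominator annihilates every term of negative index (since $1/(q;q)_{-n}=0$ for $n\ge1$), so the ${}_8\Psi_8$ truncates to a sum over $k\ge0$. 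Comparing with \eqref{rWr} and \eqref{2.11} then identifies this truncation with the very-well-poised ${}_8W_7$ on the left of \eqref{eq:corcubic}. The same mechanism disposes of the middle series: at $b=a^3$ the parameter $\tfrac{q^3a^3}{b}$ becomes $q^3$, so the induced $(q^3;q^3)_k$ kills the negative-index terms and leaves exactly the unilateral sum $\sum_{k\ge0}\frac{1-aq^{2k}}{1-a}\frac{(q,a^3;q^3)_k}{(q^3,q^2a^3;q^3)_k}q^k$.

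Next I would evaluate the right-hand side of \eqref{eq:corbcubic} at $b=a^3$. The overall prefactor is the easy part: using $\vt(a;q)=(a,\tfrac qa;q)_\infty$ from \eqref{tfdef} together with $(a;q)_\infty=(1-a)(qa;q)_\infty$, the quotient $\frac{(qa,q/a;q)_\infty}{\vt(b/a^2;q)}$ reduces at $b=a^3$ to $\frac{1}{1-a}$, leaving the prefactor $\frac{1}{(1-a)(q^3/a^3,\,q^2a^3;q^3)_\infty}$. The decisive simplification happens inside the two bracketed theta combinations: each bracket contains exactly one theta product carrying a factor $\vt(b/a^3;q^3)$, and at $b=a^3$ this equals $\vt(1;q^3)=0$ by the zero of the theta function recorded after \eqref{tfdef}, so that product drops out of each bracket. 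What survives is $\vt(a^3;q^3)\vt(q;q^3)$ in the first bracket and $-a\,\vt(q;q^3)\vt(a^{-3};q^3)$ in the second, each multiplied by its rational prefactor.

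The remaining step is pure bookkeeping of infinite products via $\vt(z;q)=(z,q/z;q)_\infty$. After cancelling the common $(q^3/a^3;q^3)_\infty$ against the prefactor and removing the repeated $(q^2,q^3;q^3)_\infty$ factors, the first bracket times the prefactor should collapse to $\frac{(q^2,qa^3;q^3)_\infty}{(1-a)(q;q^3)_\infty(q^2a^3;q^3)_\infty}$, i.e.\ the first term of \eqref{eq:corcubic}. The hard part will be the second bracket: one must match the factor $1-a^3/b^2=1-a^{-3}$ sitting in its denominator against the expansion $(a^{-3};q^3)_\infty=(1-a^{-3})(q^3/a^3;q^3)_\infty$, which simultaneously cancels that factor and eliminates the leftover $(q^3/a^3;q^3)_\infty$ from the prefactor. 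A careless cancellation here is exactly what would produce a spurious pole at $a^3=1$, so this is the step to carry out most carefully. Once done, the coefficient of the surviving $\qhyp21{qa^3,q^3}{q^3a^3}{q^3,q}$ reduces to $-\frac{a(q,q^3a^3;q^3)_\infty}{(1-a)(q^3;q^3)_\infty(q^2a^3;q^3)_\infty}$, and adding the two contributions yields \eqref{eq:corcubic}.
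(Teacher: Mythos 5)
Your proposal is correct and follows exactly the route the paper takes: the paper's entire justification is the one-line remark that setting $b=a^3$ in Corollary~\ref{corbcubic} immediately yields the result, and your detailed verification (truncation of the bilateral series via the induced $(q;q)_k$ and $(q^3;q^3)_k$ factors, vanishing of the $\vt(b/a^3;q^3)$ terms since $\vt(1;q^3)=0$, and the cancellation of $1-a^{-3}$ against $(a^{-3};q^3)_\infty$) checks out in every step. No gaps.
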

\begin{rem}
One can obtain a closed summation from Corollary~\ref{corbcubic} by specializing the parameters
such that the ${}_2\phi_1$ reduces to its first term. This is achieved by letting $qb^2=a^3$. In this case,
several of the terms in \eqref{eq:corbcubic} vanish and on the right-hand side one is left with only one term
(and in addition, several of the factors cancel).
However, the identity thus obtained is nothing new but just the
$(a,b,c,d,e,q)\mapsto(a,q^{-\frac 12}a^{\frac 32},\omega q^{-\frac 12}a^{\frac 32},
\omega^2 q^{-\frac 12}a^{\frac 32},q^{\frac 12}a^{\frac 12},q)$ special case of Bailey's very-well-poised
$_6\psi_6$ summation \eqref{eq:6psi6}.
\end{rem}
\begin{rem}
Since a cubic specialization of Theorem~\ref{thm22} yielded a new result, one might be tempted to try
a quartic specialization that makes use of the simple identity
$(z, -z, \mathrm iz,-\mathrm iz;q)_k=(z^4;q^4)_k$, where $\mathrm i$ is the imaginary unit.
The purpose of such a specialization may be to reduce the $_4\psi_4$ in \eqref{8psi84psi4} to a summable
$_1\psi_1$ (by Ramanujan's formula in \eqref{Rama1psi1}) with base $q^4$. This works by replacing
$(a,b,c,d,e,f,g,q)$ by $(a,b,\mathrm ib,-\mathrm ib,qa^2b^{-3},\mathrm iqa^2b^{-3},-\mathrm iqa^2b^{-3},q)$
but at the same time reduces the two $_4\phi_3$ series in \eqref{8psi84psi4} to summable $_1\phi_0$ series.
As a result, a summation for a bilateral series is obtained into a sum of three infinite $q$-products.
However, closer inspection of the thus obtained identity reveals that the three
terms of infinite $q$-products can be reduced, by two applications of the theta function addition formula \eqref{tfa},
to a single term, and the bilateral series that is left is just a specialized $_4\psi_4$ series of base $q^4$.
It turns out that the identity obtained in the described manner is actually just a special case of the
well-known bilateral analogue of the $q$-Dixon sum in \cite[(II.32)]{GaspRah}.
\end{rem}
\noindent A principal interesting special case of \eqref{8psi84psi4} corresponds to the case of setting %either 
%$e$, $f$ or 
$g$
to $a$. In this case the ${}_8\Psi_8$ becomes an ${}_8W_7$ and we produce the following result.
\begin{cor}
\label{cor38}Let $0<|q|<1$, $a,b,c,d,e,f\in\CCast$ such that $|q^2a^2|<|bcdef|$. Then
\begin{eqnarray}
&&\hspace{-1cm}\Whyp87{a}{b,c,d,e,f}{q,\frac{q^2a^2}{bcdef}}=
\frac{(qa,\frac{q}{a},\frac{q}{e},\frac{q}{f},\frac{qa}{bc},\frac{qa}{bd},\frac{qa}{cd},\frac{qa}{ef};q)_\infty}{(q,\frac{q}{b},\frac{q}{c},\frac{q}{d},\frac{qa}{e},\frac{qa}{f},\frac{qa^2}{bcd},\frac{q}{ef};q)_\infty}
\qpsi44{a,e,f,\frac{qa^2}{bcd}}{\frac{qa}{b},\frac{qa}{c},\frac{qa}{d},ef}{q,q}\nonumber\\*
&&\hspace{0.5cm}+\frac{(qa,\frac{b}{a},\frac{c}{a},\frac{d}{a},\frac{qa}{ef},\frac{q^2a}{bcd},\frac{q^2a^2}{bcde},\frac{q^2a^2}{bcdf};q)_\infty}{(\frac{q}{b},\frac{q}{c},\frac{q}{d},\frac{qa}{e},\frac{qa}{f},\frac{bcd}{qa^2},\frac{q^2a^2}{bcd},\frac{q^2a^2}{bcdef};q)_\infty}\qhyp43{\frac{qa}{bc},\frac{qa}{bd},\frac{qa}{cd},\frac{q^2a^2}{bcdef}}{\frac{q^2a}{bcd},\frac{q^2a^2}{bcde},\frac{q^2a^2}{bcdf}}{q,q}\nonumber\\*
&&\hspace{0.5cm}+\frac{(a,qa,\frac{q}{a},e,f,\frac{qa}{bc},\frac{qa}{bd},\frac{qa}{cd},\frac{q^2a}{bef},\frac{q^2a}{cef},\frac{q^2a}{def};q)_\infty}{(\frac{q}{b},\frac{q}{c},\frac{q}{d},\frac{qa}{b},\frac{qa}{c},\frac{qa}{d},\frac{qa}{e},\frac{qa}{f},\frac{ef}{q},\frac{q^2}{ef},\frac{q^2a^2}{bcdef};q)_\infty}\qhyp43{\frac{q}{e},\frac{q}{f},\frac{qa}{ef},\frac{q^2a^2}{bcdef}}{\frac{q^2a}{bef},\frac{q^2a}{cef},\frac{q^2a}{def}}{q,q}.
\end{eqnarray}
\end{cor}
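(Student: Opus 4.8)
The plan is to derive Corollary~\ref{cor38} as the specialization $g=a$ of the Zhang--Zhang transformation \eqref{8psi84psi4} in Theorem~\ref{thm22}. The first, and conceptually decisive, step is the reduction of the left-hand side. Putting $g=a$ turns the very-well-poised denominator parameter $\frac{qa}{g}$ into $q$, so the bilateral summand in \eqref{bbhs} acquires the factor $(q;q)_k^{-1}$. Because $(q;q)_{-n}=\infty$ for every $n\in\N$ (equivalently $(q;q)_k^{-1}=0$ for $k\le -1$, as follows from $(a;q)_{-n}=q^{\binom{n+1}2}(-a)^{-n}(\frac qa;q)_n^{-1}$ with $a=q$, since $(1;q)_n=0$), every term of negative index drops out and the bilateral sum collapses to a unilateral one over $k\ge 0$. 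As $g=a$ simultaneously inserts the numerator factor $(a;q)_k$, the surviving series is termwise identical to the defining series \eqref{2.11} of ${}_8W_7\big(a;b,c,d,e,f;q,\frac{q^2a^2}{bcdef}\big)$; the argument $\frac{q^2a^3}{bcdefg}$ becomes $\frac{q^2a^2}{bcdef}$, and the hypothesis $|q^2a^3|<|bcdefg|$ becomes $|q^2a^2|<|bcdef|$, exactly the assumption of the corollary.

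Next I would substitute $g=a$ into each of the three terms on the right of \eqref{8psi84psi4} and simplify. The reductions are uniform: $\frac{qa}{eg}\mapsto\frac qe$, $\frac{qa}{fg}\mapsto\frac qf$, $\frac{qa}{g}\mapsto q$, $\frac{efg}{a}\mapsto ef$, while each $_4\phi_3$ denominator element containing $g$ collapses (for instance $\frac{q^2a^2}{bcdg}\mapsto\frac{q^2a}{bcd}$ in the second term and $\frac{q^2a^2}{befg}\mapsto\frac{q^2a}{bef}$ in the third). Under these the first-term $_4\psi_4$ becomes the $_4\psi_4\big(a,e,f,\tfrac{qa^2}{bcd};\tfrac{qa}b,\tfrac{qa}c,\tfrac{qa}d,ef;q,q\big)$ of the corollary, and the two balanced $_4\phi_3$'s become the ones stated. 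What remains is to match the infinite $q$-products: one cancels the common factors appearing in both numerator and denominator --- $(q;q)_\infty$ in the second and third terms, and $(\tfrac qa;q)_\infty$ together with the pair $(\tfrac qe,\tfrac qf;q)_\infty$ in the second --- and notes that the numerator $(g;q)_\infty$ of the third term becomes the $(a;q)_\infty$ visible in its prefactor. This is routine bookkeeping which I would record without displaying every step.

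The main obstacle is not the computation but the justification that the value $g=a$ may be inserted directly into \eqref{8psi84psi4}. The cleanest formulation is to take the truncated series as the definition of the ${}_8\Psi_8$ at $g=a$ (legitimate since $\frac{qa}{g}\to q\notin q^{-\N_0}$ remains an admissible denominator parameter and the negative-index summands vanish) and to verify that none of the three prefactors on the right develops a genuine pole or $0/0$ indeterminacy there --- which holds, since all apparent singular factors cancel as described above. Alternatively one may treat $g\to a$ as a limit and invoke a dominated-convergence argument in the spirit of \cite[Appendix~A]{Koornwinder1990} to interchange limit and summation. Either way this is the only point requiring care; everything else follows by direct specialization of Theorem~\ref{thm22}.
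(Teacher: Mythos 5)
Your proposal is correct and follows essentially the same route as the paper: the paper's proof sets $e=a$ in \eqref{8psi84psi4} and then relabels $g\mapsto e$, which by the manifest symmetry of Theorem~\ref{thm22} in $e,f,g$ is identical to your direct substitution $g=a$. Your additional care in explaining why the denominator parameter $q$ truncates the bilateral series to the unilateral ${}_8W_7$ (via $(q;q)_{-n}^{-1}=0$) is a point the paper leaves implicit, but it introduces no divergence from the paper's argument.
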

\begin{proof}
Start with \eqref{8psi84psi4} and setting $e=a$ converts the ${}_8\Psi_8$ to an ${}_8W_7$. Then replace $g\mapsto e$ and simplification completes the proof.
\end{proof}

\noindent Next we recall an interesting transformation formula first derived by Wei and Yu \cite[Theorem 3]{WeiYu2021} for a very-well-poised ${}_8\Psi_8$. This transformation formula provides a generalization of the two and three-term transformations for an ${}_8W_7$ \cite[(III.23), (III.37)]{GaspRah}.
Note that in the following transformation, while the left-hand side is manifestly symmetric in the six variables $b,c,d,e,f,g$, the right-hand side satisfies the same symmetry (which otherwise is not obvious).

\begin{thm}{Wei and Yu (2019) \cite[Theorem 3]{WeiYu2021}.}
\label{thmwy}
Let $0<|q|<1$, $a,b,c,d,e,f,g\in\CCast$ such that
$|q^2a^3|<|bcdefg|$, $|qa|<|fg|$, and none of the denominator factors vanish. Then one has the following transformation formula for a very-well-poised ${}_8\Psi_8$, namely 
\begin{eqnarray}
&&\hspace{-0.6cm}\Wpsi88{a}{b,c,d,e,f,g}{q,\frac{q^2a^3}{bcdefg}}=\qpsi88{\pm q\sqrt{a},b,c,d,e,f,g}{\pm\sqrt{a},\frac{qa}{b},
\frac{qa}{c},\frac{qa}{d},\frac{qa}{e},\frac{qa}{f},\frac{qa}{g}}{q,\frac{q^2a^3}{bcdefg}}\nonumber\\
&&\hspace{-0.3cm}=
\frac{(qa,\frac{q}{a},\frac{b}{a},\frac{qa}{cd},\frac{qa}{ce},\frac{qa}{de},\frac{qa}{fg},\frac{bcd}{a},\frac{bce}{a},\frac{bde}{a},\frac{q^2a^2}{bcdef},\frac{q^2a^2}{bcdeg};q)_\infty}
{(\frac{q}{f},\frac{q}{g},\frac{qa}{c},\frac{qa}{d},\frac{qa}{e},\frac{bc}{a},\frac{bd}{a},\frac{be}{a},\frac{qa}{cde},\frac{bcde}{a},\frac{q^2a}{bcde},\frac{q^2a^3}{bcdefg};q)_\infty}
\Wpsi88{\frac{bcde}{qa}}{b,c,d,e,\frac{bcdef}{qa^2},\frac{bcdeg}{qa^2}}{q,\frac{qa}{fg}}\nonumber\\*
&&\hspace{-0.2cm}+\frac{(q,qa,\frac{q}{a},c,d,e,\frac{qb}{c},\frac{qb}{d},\frac{qb}{e},\frac{qb}{f},\frac{qb}{g},\frac{qa}{bf},\frac{qa}{bg};q)_\infty\vt(\frac{bcde}{qa^2};q)}
{(\frac{q}{b},\frac{q}{f},\frac{q}{g},\frac{qa}{b},\frac{qa}{c},\frac{qa}{d},\frac{qa}{e},\frac{qa}{f},\frac{qa}{g},\frac{bc}{a},\frac{bd}{a},\frac{be}{a},\frac{qb^2}{a};q)_\infty\vt(\frac{cde}{qa};q)}
\Whyp87{\frac{b^2}{a}}{\frac{bc}{a},\frac{bd}{a},\frac{be}{a},\frac{bf}{a},\frac{bg}{a}}{q,\frac{q^2a^3}{bcdefg}}
.
\label{8psi88W7}
\end{eqnarray}
\end{thm}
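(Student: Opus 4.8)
The plan is to follow Wei and Yu~\cite{WeiYu2021} and obtain \eqref{8psi88W7} as the bilateral closure, under Ismail's analytic-continuation argument~\cite{Ismail77,AskeyIsmail79}, of a purely unilateral three-term relation. The natural seed is Bailey's three-term transformation connecting three very-well-poised ${}_8\phi_7$ (equivalently ${}_8W_7$) series, \cite[(III.37)]{GaspRah}, which already subsumes the two-term relation \cite[(III.23)]{GaspRah}; the statement explicitly advertises \eqref{8psi88W7} as a common generalization of both. First I would align the parameters of \cite[(III.37)]{GaspRah} with those of \eqref{8psi88W7} so that the unilateral ${}_8W_7$ with base $b^2/a$ and argument $q^2a^3/(bcdefg)$ on the right of \eqref{8psi88W7} is literally the term that will be left unlifted, and so that the theta quotient $\vartheta(bcde/(qa^2);q)/\vartheta(cde/(qa);q)$ emerges as one of the structural coefficients.

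The decisive step is the choice of continuation variable. As the remark preceding the theorem stresses, one must not continue in a variable that enters through the non-analytic well-poised pairing $\tfrac{(\alpha z,\alpha/z;q)_j}{(\beta z,\beta/z;q)_j}$, which is exactly the obstruction blocking the ${}_{10}\psi_{10}$ analogue. I would continue in $g$, in the reciprocal variable $z=g^{-1}$: in every bilateral series the $g$-dependence then collapses to the admissible combination $\tfrac{(\alpha z^{-1};q)_k}{(\beta z;q)_k}z^k$, namely $\tfrac{(g;q)_k}{(qa/g;q)_k}g^{-k}$ on the left and $\tfrac{(\gamma g;q)_k}{(qa/g;q)_k}g^{-k}$ with $\gamma=bcde/(qa^2)$ in the ${}_8\Psi_8$ on the right, while the theta quotient is independent of $g$. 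On the region where all three series converge (so that $|g|$ is large, i.e.\ $z$ near $0$), both sides are meromorphic in $z$. Setting $qa/g=q^{n+1}$, i.e.\ $g=aq^{-n}$ with $n\in\No$, makes the well-poised denominator partner blow up for $k\le-(n+1)$, so both ${}_8\Psi_8$ series truncate from below to unilateral ${}_8W_7$ series and the asserted identity degenerates precisely to the seed relation \cite[(III.37)]{GaspRah}. Since these points accumulate at $z=0$, an interior point of the common domain of holomorphy, the identity theorem forces \eqref{8psi88W7} throughout.

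The step I expect to be the genuine obstacle is the analytic justification of this scheme: exhibiting a fixed punctured neighbourhood of $z=0$ on which the two bilateral sums converge locally uniformly and are dominated by a single $z$-independent convergent series, so that term-by-term evaluation at $g=aq^{-n}$ and the passage to the limit are both legitimate. This is the subtlety already flagged in the footnote to Theorem~\ref{thm337H7}, and I would handle it by a Koornwinder-type majorization~\cite[Appendix~A]{Koornwinder1990}, checking in particular that under $|q^2a^3|<|bcdefg|$ and $|qa|<|fg|$ the retained unilateral ${}_8W_7$ stays convergent and that no coefficient develops an uncontrolled pole as $z\to0$. A careful accounting of which infinite-product factors gain or lose zeros as $g\to aq^{-n}$ is what turns the collapse into exactly the coefficients of \cite[(III.37)]{GaspRah}.

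Finally I would establish the symmetry asserted in the statement. Although the right-hand side singles out $b$ and the pair $f,g$, it must be invariant under all permutations of $b,c,d,e,f,g$, as the left-hand side manifestly is. Rather than re-deriving the identity under a transposition, I would verify invariance on generators: $b\leftrightarrow c$ is immediate from the $q$-products, whereas $e\leftrightarrow f$, which moves a parameter into and out of the argument $qa/(fg)$, reduces to the theta-function addition formula \eqref{tfa} relating the two superficially different $\vartheta$-quotients. As a consistency check one confirms, as Wei and Yu observed, that a trivializing specialization of the unilateral ${}_8W_7$ recovers Bailey's ${}_6\psi_6$ summation \eqref{eq:6psi6}. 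A more self-contained but computational alternative to the whole argument is to split the left-hand ${}_8\Psi_8$ into its $k\ge0$ and $k<0$ halves via the reflection \eqref{poch.id:3}, apply the unilateral three-term ${}_8W_7$ relation to each half, and reassemble; in that route the theta quotient arises from recombining the two reflected prefactors.
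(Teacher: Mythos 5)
The paper does not prove Theorem~\ref{thmwy} itself --- it quotes the result from Wei and Yu and, in the remark preceding it, describes their method as lifting a three-term transformation of $_8\phi_7$ series to the bilateral level via Ismail's analytic continuation argument, which is precisely the strategy you outline (seed \cite[(III.37)]{GaspRah}, continuation in $z=g^{-1}$ through the admissible factors $\frac{(\alpha z^{-1};q)_k}{(\beta z;q)_k}z^k$, truncation at $g=aq^{-n}$, identity theorem at the accumulation point $z=0$, plus the Koornwinder-style majorization). Your proposal is therefore a correct reconstruction of essentially the same approach as the one the paper attributes to the original source.
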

%\begin{proof}
%See the proof of  \cite[Theorem 3]{WeiYu2021}.
%\end{proof}

%\begin{rem}
%One may obtain an alternative transformation for an ${}_8\Psi_8$ by making the replacement
%\[
%(a,f,g)\mapsto\left(\frac{bcde}{qa},\frac{qa^2f}{bcde},\frac{qa^2g}{bcde}\right),
%\]
%in the Wei--Yu transformation \eqref{8psi88W7} 
%and solving for the regular ${}_8\Psi_8$.
%However, this transformation is seen to be equivalent to \eqref{8psi88W7} after using the
%two-term transformation \cite[(III.23)]{GaspRah} and using properties of modified theta functions.
%\end{rem}

\begin{rem}
\label{rem310a}
Wei and Yu noticed  \cite[p.~3]{WeiYu2021}, if one sets $e=qa/d$ in Theorem~\ref{thmwy}, the left-hand side becomes a ${}_6\Psi_6$, the first term on the right-hand side vanishes, and the ${}_8W_7$ becomes a nonterminating ${}_6W_5$ which can then be summed using \cite[\href{http://dlmf.nist.gov/17.7.E15}{(17.7.15)}]{NIST:DLMF}, and the identity reduces to Bailey's ${}_6\Psi_6$ summation \cite[\href{http://dlmf.nist.gov/17.8.E7}{(17.8.7)}]{NIST:DLMF}.
\end{rem}

\begin{rem}
The bilateral basic hypergeometric transformation \eqref{8psi88W7} is seen to be a generalization of the two-term transformation  \cite[(III.23)]{GaspRah}
by setting $b=a$ and then utilizing \cite[Exercise 2.16]{GaspRah}. It is also a generalization of the three-term transformation \cite[(III.37)]{GaspRah}, obtained by setting one of $c,d,e,f,g$ to $a$.
\end{rem}

\noindent By setting $g=a/b$ in Theorem~\ref{thmwy}, then one obtains the following nonterminating summation formula involving two ${}_8\Psi_8$.
\begin{cor}
\label{cor311a}
Let $0<|q|<1$, $a,b,c,d,e,f\in\CCast$ such that
$|q^2a^2|<|cdef|$, $|qb|<|f|$, and none of the denominator factors vanish. Then one has the following summation formula for an ${}_8\Psi_8$, namely
\begin{eqnarray}
&&\hspace{-0.6cm}\Wpsi88{a}{b,c,d,e,f,\frac{a}{b}}{q,\frac{q^2a^2}{cdef}}\nonumber\\
&&\hspace{-0.3cm}=
\frac{(
qa,\frac{q}{a},\frac{b}{a},\frac{qb}{f},\frac{qa}{cd},\frac{qa}{ce},\frac{qa}{de},\frac{bcd}{a},\frac{bce}{a},
\frac{bde}{a},\frac{q^2a}{cde},\frac{q^2a^2}{bcdef}
;q)_\infty}
{(\frac{q}{f},\frac{qb}{a},\frac{qa}{c},\frac{qa}{d},\frac{qa}{e},\frac{bc}{a},\frac{bd}{a},\frac{be}{a},\frac{qa}{cde},\frac{bcde}{a},\frac{q^2a}{bcde},\frac{q^2a^2}{cdef};q)_\infty
}
\Wpsi88{\frac{bcde}{qa}}{b,c,d,e,\frac{bcdef}{qa^2},\frac{cde}{qa}}{q,\frac{qb}{f}}\nonumber\\
&&\hspace{-0.2cm}+\frac{(q,q,qa,\frac{q}{a},c,d,e,\frac{qb}{c},\frac{qb}{d},\frac{qb}{e},\frac{qb}{f},\frac{qa}{bf};q)_\infty\vt(\frac{bcde}{qa^2};q)}
{(qb,\frac{q}{b},\frac{q}{f},\frac{qb}{a},\frac{qa}{b},\frac{qa}{c},\frac{qa}{d},\frac{qa}{e},\frac{qa}{f},\frac{bc}{a},\frac{bd}{a},\frac{be}{a};q)_\infty\vt(\frac{cde}{qa};q)}.
\label{8psi88W7sum1}
\end{eqnarray}
\end{cor}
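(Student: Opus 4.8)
The plan is to specialize the Wei--Yu transformation of Theorem~\ref{thmwy}, equation~\eqref{8psi88W7}, by setting $g=a/b$. The first step is to track how this substitution propagates through the argument and the parameter lists. On the left-hand side the argument $\frac{q^2a^3}{bcdefg}$ collapses to $\frac{q^2a^2}{cdef}$, so the ${}_8\Psi_8$ takes exactly the form on the left of~\eqref{8psi88W7sum1}. In the first term on the right, the sixth numerator parameter $\frac{bcdeg}{qa^2}$ of the ${}_8\Psi_8$ becomes $\frac{cde}{qa}$ and its argument $\frac{qa}{fg}$ becomes $\frac{qb}{f}$, matching the surviving ${}_8\Psi_8$ of the corollary.

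The crucial observation concerns the ${}_8W_7$ in the second term of~\eqref{8psi88W7}, whose numerator parameters (beyond $\frac{b^2}{a}$ and $\pm q\sqrt{b^2/a}$) are $\frac{bc}{a},\frac{bd}{a},\frac{be}{a},\frac{bf}{a},\frac{bg}{a}$. Under $g=a/b$ the last of these becomes $\frac{bg}{a}=1$. Writing this very-well-poised series as its underlying ${}_8\phi_7$ via~\eqref{rWr}, the factor $(1;q)_k$ vanishes for every $k\ge1$, so the whole series collapses to its $k=0$ term, namely $1$. Hence the second term reduces to its prefactor alone, producing the closed (series-free) contribution in~\eqref{8psi88W7sum1}.

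It then remains to simplify the two prefactors. In the coefficient of the first ${}_8\Psi_8$ one substitutes $\frac{qa}{fg}\mapsto\frac{qb}{f}$, $\frac{q^2a^2}{bcdeg}\mapsto\frac{q^2a}{cde}$, $\frac{q}{g}\mapsto\frac{qb}{a}$ and $\frac{q^2a^3}{bcdefg}\mapsto\frac{q^2a^2}{cdef}$; no cancellation occurs, and the result is the first coefficient of~\eqref{8psi88W7sum1}. In the coefficient of the (now trivial) ${}_8W_7$ one uses $\frac{qb}{g}=\frac{qb^2}{a}$, $\frac{qa}{bg}=q$, $\frac{q}{g}=\frac{qb}{a}$ and $\frac{qa}{g}=qb$; here the factor $\big(\frac{qb^2}{a};q\big)_\infty$ arises in both numerator (from $\frac{qb}{g}$) and denominator and cancels, after which the surviving products match the second coefficient of~\eqref{8psi88W7sum1}. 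Finally, the hypotheses transform as needed: $|q^2a^3|<|bcdefg|$ and $|qa|<|fg|$ become $|q^2a^2|<|cdef|$ and $|qb|<|f|$.

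The main point requiring care---rather than a genuine obstacle---is the reduction of the ${}_8W_7$ to $1$. One must recognize that setting $g=a/b$ forces the numerator parameter $\frac{bg}{a}$ to equal $1$, so that $(1;q)_k$ annihilates every summand with $k\ge1$ and only the constant $k=0$ term survives; this is a genuine termination after a single term, not merely a reduction to a summable lower series. The denominator entry $\frac{qb}{g}$ of that ${}_8W_7$, equal to $\frac{qb^2}{a}$ after the substitution, is harmless: it is excluded from $q^{-\N_0}$ by the non-vanishing hypothesis and in any case affects only the already-annihilated terms with $k\ge1$.
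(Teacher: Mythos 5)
Your proposal is correct and follows exactly the paper's route: the corollary is introduced by the sentence ``By setting $g=a/b$ in Theorem~\ref{thmwy}\ldots'', and your substitution, the collapse of the $_8W_7$ to $1$ via the numerator parameter $\frac{bg}{a}=1$, and the bookkeeping of the prefactors (including the cancellation of $(\frac{qb^2}{a};q)_\infty$ and the extra factor $(q;q)_\infty$ from $\frac{qa}{bg}=q$) are precisely what the paper intends. Your write-up simply makes explicit the details the paper leaves to the reader.
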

\begin{rem}By taking $f=a$ in \eqref{8psi88W7sum1} we immediately obtain
the summation formula
\begin{eqnarray}
&&\hspace{0.5cm}\Whyp87
{a}{b,c,d,e,\frac{a}{b}}{q,\frac{q^2a}{cde}}
%\nonumber\\
%&&\hspace{-2.0cm}
=
\frac{(
qa,\frac{b}{a},\frac{qa}{cd},\frac{qa}{ce},\frac{qa}{de},\frac{bcd}{a},\frac{bce}{a},
\frac{bde}{a};q)_\infty}
{(\frac{qa}{c},\frac{qa}{d},\frac{qa}{e},\frac{bc}{a},\frac{bd}{a},\frac{be}{a},\frac{qa}{cde},\frac{bcde}{a};q)_\infty
}
\Whyp87{\frac{bcde}{qa}}{b,c,d,e,\frac{cde}{qa}}{q,\frac{qb}{a}}\nonumber\\
&&\hspace{1.5cm}+\frac{(q,qa,c,d,e,\frac{qb}{c},\frac{qb}{d},\frac{qb}{e};q)_\infty\vt(\frac{bcde}{qa^2};q)}
{(qb,\frac{qa}{b},\frac{qa}{c},\frac{qa}{d},\frac{qa}{e},\frac{bc}{a},\frac{bd}{a},\frac{be}{a};q)_\infty\vt(\frac{cde}{qa};q)},
\label{8W7sumcor}
\end{eqnarray}
valid for $0<|q|<1$, $a,b,c,d,e\in\CCast$ such that
$|q^2a|<|cde|$, $|qb|<|a|$ and none of the denominator factors vanish. 
This summation can also be obtained from specializing \cite[(2.11.1)]{GaspRah}
by taking $bc=a$ (which reduces the third ${}_8W_7$ series to $1$),
application of the transformation of nonterminating $_8W_7$ series in \cite[(2.10.1)]{GaspRah} to the second
${}_8W_7$ series, and relabeling.
\end{rem}

\noindent One can also obtain a nonterminating summation formula involving two ${}_8\Psi_8$ by starting with
Theorem~\ref{thmwy} and utilizing the nonterminating very-well-poised ${}_8W_7$ summation 
\cite[(II.16)]{GaspRah}
\begin{eqnarray}
&&\hspace{-2.0cm}\Whyp87{\sqrt{\frac{ab}{q}}c}{a,b,\pm c,\frac{\sqrt{qab}}{c}}{q,-\frac{\sqrt{q}c}{\sqrt{ab}}}=\frac{(\sqrt{qab}c,
\frac{\sqrt{q}c}{\sqrt{ab}};q)_\infty(qa,qb,\frac{qc^2}{a},\frac{qc^2}{b};q^2)_\infty}{(\sqrt{\frac{qa}{b}}c,\sqrt{\frac{qb}{a}}c;q)_\infty(q,qab,qc^2,\frac{qc^2}{ab};q^2)_\infty}.
\label{GRII.16}
\end{eqnarray}
\begin{cor}
\label{cor311}
Let $0<|q|<1$, $a,b,c,d\in\CCast$ such that
$|qa|<|cd|$, $|q^{1/2}b|<|\sqrt{cd}|$ and none of the denominator factors vanish. Then one has the following summation formula for the sum of two ${}_8\Psi_8$'s, namely
\begin{eqnarray}
&&\hspace{-0.5cm}\Wpsi88{a}{b,c,d,\pm\frac{\sqrt{q}a}{\sqrt{cd}},\frac{cd}{b}}{q,-\frac{qa}{cd}}\nonumber\\
&&\hspace{0.0cm}=
\frac{(
qa,\frac{q}{a},-\frac{q}{b},\frac{b}{a},-\sqrt{\frac{qc}{d}},-\sqrt{\frac{qd}{c}},\frac{qa}{cd},\frac{\sqrt{q}b}{\sqrt{cd}},\frac{bcd}{a},-\sqrt{\frac{qc}{d}}b,-\sqrt{\frac{q2d}{c}}b,-\frac{q^{3/2}a}{(cd)^{3/2}}
;q)_\infty}
{(-\sqrt{qcd},-\sqrt{qcd}b,\frac{qa}{c},\frac{qa}{d},\frac{bc}{a},\frac{bd}{a},\frac{qb}{cd},\frac{\sqrt{qcd}}{a},-\frac{\sqrt{q}}{\sqrt{cd}},-\frac{\sqrt{q}b}{\sqrt{cd}},-\frac{q^{3/2}}{\sqrt{cd}b},-\frac{qa}{cd};q)_\infty
}\nonumber\\
&&\hspace{0.8cm}
\times \Wpsi88{-\frac{\sqrt{cd}b}{\sqrt{q}}
%\sqrt{\frac{cd}{q}}b
}{\pm b,c,d,-\frac{\sqrt{q}a}{\sqrt{cd}},-\frac{(cd)^{3/2}}{\sqrt{q}a}}{q,\frac{\sqrt{q}b}{\sqrt{cd}}}\nonumber\\
&&\hspace{1.2cm}+\frac{(q,qa,\frac{q}{a},c,d,\frac{qa}{cd},\frac{qa}{cd},-\frac{\sqrt{q}a}{\sqrt{cd}},\frac{\sqrt{qcd}}{b},\frac{qb^2}{cd};q)_\infty\vt(-\sqrt{\frac{cd}{q}}\frac{b}{a};q)(\frac{qbc}{a},\frac{qbd}{a},\frac{q^2ab}{c^2d},\frac{q^2ab}{cd^2};q^2)_\infty}
{(\frac{q}{b},\frac{qa}{b},\frac{qa}{c},\frac{qa}{d},\frac{bc}{a},\frac{bd}{a},\frac{\sqrt{qcd}}{a},\frac{qb}{cd},-\frac{\sqrt{q}b}{\sqrt{cd}},\frac{qab}{cd};q)_\infty\vt(-\sqrt{\frac{cd}{q}};q)(q,qcd,\frac{q^2b^2}{cd},\frac{q^2a^2}{c^2d^2};q^2)_\infty}.
\label{8psi88W7sum}
\end{eqnarray}
\end{cor}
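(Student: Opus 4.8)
The plan is to specialize the Wei--Yu transformation \eqref{8psi88W7} of Theorem~\ref{thmwy} so that the very-well-poised ${}_8W_7$ appearing in its second term collapses to a closed-form product via the nonterminating ${}_8W_7$ summation \eqref{GRII.16}, leaving a single ${}_8\Psi_8$ (the first term) together with infinite products. Concretely, in \eqref{8psi88W7} I would substitute
\[
(e,f,g)\mapsto\Big(-\tfrac{\sqrt q\,a}{\sqrt{cd}},\,\tfrac{\sqrt q\,a}{\sqrt{cd}},\,\tfrac{cd}{b}\Big)
\]
for a fixed branch of $\sqrt{cd}$. A short computation gives $bcdefg=-qa^2cd$ and $fg=\frac{\sqrt q\,a\sqrt{cd}}{b}$, so the argument $\frac{q^2a^3}{bcdefg}$ of the left-hand ${}_8\Psi_8$ becomes $-\frac{qa}{cd}$ and its six parameters become $b,c,d,\pm\frac{\sqrt q\,a}{\sqrt{cd}},\frac{cd}{b}$, matching the left-hand side of \eqref{8psi88W7sum}. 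The two hypotheses $|q^2a^3|<|bcdefg|$ and $|qa|<|fg|$ of Theorem~\ref{thmwy} translate under this substitution into exactly $|qa|<|cd|$ and $|q^{1/2}b|<|\sqrt{cd}|$, the stated convergence conditions.

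The key step is that the ${}_8W_7$ in the second term, namely $\Whyp87{\frac{b^2}{a}}{\frac{bc}{a},\frac{bd}{a},\frac{be}{a},\frac{bf}{a},\frac{bg}{a}}{q,\frac{q^2a^3}{bcdefg}}$, is then precisely of the form summed by \eqref{GRII.16}. Indeed $\frac{be}{a}$ and $\frac{bf}{a}$ form the pair $\pm\frac{b\sqrt q}{\sqrt{cd}}$, and with the identifications $\alpha=\frac{bc}{a}$, $\beta=\frac{bd}{a}$, $\gamma=\frac{b\sqrt q}{\sqrt{cd}}$ one checks that the very-well-poised parameter $\sqrt{\alpha\beta/q}\,\gamma=\frac{b^2}{a}$, the remaining parameter $\frac{\sqrt{q\alpha\beta}}{\gamma}=\frac{bg}{a}=\frac{cd}{a}$, and the argument $-\frac{\sqrt q\,\gamma}{\sqrt{\alpha\beta}}=-\frac{qa}{cd}$ all agree with the substituted \eqref{8psi88W7}. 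Applying \eqref{GRII.16} therefore replaces this ${}_8W_7$ by the displayed quotient of infinite $q$-shifted factorials of bases $q$ and $q^2$.

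For the first term I would simply evaluate the parameters of $\Wpsi88{\frac{bcde}{qa}}{b,c,d,e,\frac{bcdef}{qa^2},\frac{bcdeg}{qa^2}}{q,\frac{qa}{fg}}$ under the same substitution: the very-well-poised parameter becomes $-\frac{\sqrt{cd}\,b}{\sqrt q}$, the six parameters become $\pm b,c,d,-\frac{\sqrt q\,a}{\sqrt{cd}},-\frac{(cd)^{3/2}}{\sqrt q\,a}$ (using $\frac{bcdef}{qa^2}=-b$ and $\frac{bcdeg}{qa^2}=-\frac{(cd)^{3/2}}{\sqrt q\,a}$), and the argument becomes $\frac{\sqrt q\,b}{\sqrt{cd}}$, exactly the ${}_8\Psi_8$ recorded in \eqref{8psi88W7sum}. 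The specific assignment $e=-\frac{\sqrt q\,a}{\sqrt{cd}}$, $f=\frac{\sqrt q\,a}{\sqrt{cd}}$ (rather than the reverse) is what produces the displayed signs, while the second term is insensitive to this choice.

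The remaining work is bookkeeping. I would collect the Wei--Yu prefactor of the first term to recover the first product in \eqref{8psi88W7sum}, and multiply the Wei--Yu prefactor of the second term by the \eqref{GRII.16} evaluation, then simplify the combined infinite products (including the two theta factors $\vt(\cdot;q)$ and the base-$q^2$ products arising from \eqref{GRII.16}) to the second product in \eqref{8psi88W7sum}. I expect this last simplification to be the main obstacle: it is entirely routine but lengthy, and one must track the chosen branch of $\sqrt{cd}$ consistently through all the half-integer powers of $q$ and of $cd$ so that no spurious signs are introduced.
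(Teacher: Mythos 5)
Your proposal is correct and follows essentially the same route as the paper: the paper's proof also specializes Theorem~\ref{thmwy} via $(e,f,g)\mapsto(-\frac{\sqrt{q}a}{\sqrt{cd}},\frac{\sqrt{q}a}{\sqrt{cd}},\frac{cd}{b})$ (written there in the equivalent but more opaque form $(g,f,e)\mapsto(-\frac{\sqrt{qcd}\,a}{be},-e,-\frac{\sqrt{q}a}{\sqrt{cd}})$), sums the resulting very-well-poised ${}_8W_7$ by \eqref{GRII.16}, and simplifies. Your verification of the parameter matching, the resulting ${}_8\Psi_8$ in the first term, and the translated convergence conditions all agree with the paper.
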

\begin{proof}
Start with Theorem~\ref{thmwy} and replace
\[
(g,f,e)\mapsto\left(-
\frac{\sqrt{qcd}a}{be}
,-e,-
\frac{\sqrt{q}a}{\sqrt{cd}}\right).
\]
Then the resulting very-well-poised ${}_8W_7$ can be summed using \cite[(II.16)]{GaspRah}. Simplifying the final result completes the proof. 
\end{proof}

\noindent One can also obtain a nonterminating summation formula involving two ${}_8\Psi_8$ by starting with Theorem~\ref{thmwy} 
and utilizing the nonterminating very-well-poised ${}_8W_7$ summation 
\cite[(II.18)]{GaspRah}
\begin{eqnarray}
&&\hspace{-4.7cm}\Whyp87{a}
{-a,b,\frac{q}{b},c,
\frac{q}{c}}{q,-a}
=\frac{(a,qa;q)_\infty(abc,\frac{qab}{c},\frac{qac}{b},\frac{q^2a}{bc};q^2)_\infty}{(ab,ac,\frac{qa}{b},\frac{qa}{c};q)_\infty}.
\label{GRII.18}
\end{eqnarray}
\begin{cor}
\label{cor312}
Let $0<|q|<1$, $a,b,c,d\in\CCast$ such that
$|b^2|<|a|$, and none of the denominator factors vanish. Then one has the following summation formula for the sum of two ${}_8\Psi_8$'s related by the substitution $(a,d)\mapsto(-a,-d)$, namely
\begin{eqnarray}
&&\hspace{-0.6cm}\Wpsi88{a}{\pm b,c,d,\frac{qa^2}{b^2c},\frac{qa^2}{b^2d}}{q,-\frac{b^2}{a}}\nonumber\\
&&\hspace{-0.3cm}=
\frac{(
qa,\frac{q}{a},\frac{b}{a},-\frac{q}{d},\frac{b^2}{a},\frac{qa}{b},-\frac{qa}{c},-\frac{bc}{a},-\frac{b^2c}{a},-\frac{qa}{bc},
-\frac{b^2d}{a^2}
;q)_\infty}
{(-qa,-\frac{q}{a},\frac{q}{d},-\frac{b}{a},-\frac{qa}{b},\frac{qa}{c},\frac{bc}{a},-\frac{b^2}{a},-\frac{b^2}{a},\frac{qa}{bc},\frac{b^2c}{a},\frac{b^2d}{a^2};q)_\infty
}
\Wpsi88{-a}{\pm b,c,-d,\frac{qa^2}{b^2c},-\frac{qa^2}{b^2d}}{q,\frac{b^2}{a}}\nonumber\\
&&\hspace{-0.2cm}+\frac{(qa,-b,c,\frac{q}{a},\frac{b^2}{a},\frac{bd}{a},\frac{qa}{bd},\frac{qa^2}{b^2c};q)_\infty(q^2,\frac{q^2a}{cd},\frac{b^4cd}{a^3},\frac{qb^2c}{ad},\frac{qb^2d}{ac};q^2)_\infty\vt(-1;q)}
{(\frac{q}{b},\frac{q}{d},-\frac{b^2}{a},\pm \frac{qa}{b},\frac{qa}{c},\frac{qa}{d},\frac{bc}{a},\frac{b^2c}{a},\frac{b^2d}{a},\frac{b^2d}{a},\frac{b^2d}{a^2},\frac{qa}{bc};q)_\infty\vt(-\frac{a}{b};q)}.
\label{8psi88W7sum}
\end{eqnarray}
\end{cor}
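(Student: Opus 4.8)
The plan is to obtain the identity of Corollary~\ref{cor312} as a specialization of the Wei--Yu transformation \eqref{8psi88W7}, chosen so that the very-well-poised ${}_8W_7$ on its right-hand side becomes summable by the ${}_8W_7$ summation \eqref{GRII.18}. Concretely, in Theorem~\ref{thmwy} I would make the substitution
\[
(b,c,d,e,f,g)\mapsto\Big(b,\,-b,\,c,\,\tfrac{qa^2}{b^2c},\,d,\,\tfrac{qa^2}{b^2d}\Big),
\]
where on the right $c,d$ now denote the two free parameters of the corollary. With this choice the six upper parameters of the left-hand ${}_8\Psi_8$ become $\pm b,c,\tfrac{qa^2}{b^2c},d,\tfrac{qa^2}{b^2d}$, the product of these six parameters equals $-q^2a^4/b^2$, and hence the argument $q^2a^3/(bcdefg)$ becomes $-b^2/a$; this is exactly the series on the left of the claimed formula, and the two convergence hypotheses of Theorem~\ref{thmwy}, namely $|q^2a^3|<|bcdefg|$ and $|qa|<|fg|$, both collapse to the single condition $|b^2|<|a|$.

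The key point is that under this substitution the five non-base parameters $\tfrac{bc}{a},\tfrac{bd}{a},\tfrac{be}{a},\tfrac{bf}{a},\tfrac{bg}{a}$ of the ${}_8W_7$ become
\[
-\tfrac{b^2}{a},\ \tfrac{bc}{a},\ \tfrac{qa}{bc},\ \tfrac{bd}{a},\ \tfrac{qa}{bd},
\]
which is precisely the pattern $-A,B,\tfrac qB,C,\tfrac qC$ with $A=\tfrac{b^2}{a}$, $B=\tfrac{bc}{a}$, $C=\tfrac{bd}{a}$, while the argument $q^2a^3/(bcdefg)=-b^2/a$ equals $-A$. The ${}_8W_7$ is therefore summed by \eqref{GRII.18} under $(a,b,c)\mapsto(\tfrac{b^2}{a},\tfrac{bc}{a},\tfrac{bd}{a})$, producing a closed product with base-$q$ factors $(\tfrac{b^2}{a},\tfrac{qb^2}{a};q)_\infty$ and base-$q^2$ factors $(\tfrac{b^4cd}{a^3},\tfrac{qb^2c}{ad},\tfrac{qb^2d}{ac},\tfrac{q^2a}{cd};q^2)_\infty$. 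Simultaneously, the first ${}_8\Psi_8$ on the right of \eqref{8psi88W7} acquires base $\tfrac{bcde}{qa}=-a$, its last two upper parameters become $\tfrac{bcdef}{qa^2}=-d$ and $\tfrac{bcdeg}{qa^2}=-\tfrac{qa^2}{b^2d}$, and its argument $\tfrac{qa}{fg}$ equals $\tfrac{b^2}{a}$; collecting these shows it is exactly the $(a,d)\mapsto(-a,-d)$ image of the left-hand series, as asserted. Finally, the two theta factors $\vt(\tfrac{bcde}{qa^2};q)$ and $\vt(\tfrac{cde}{qa};q)$ in the Wei--Yu coefficient evaluate to $\vt(-1;q)$ and $\vt(-\tfrac ab;q)$, matching the theta factors in the statement.

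The only genuinely laborious step, and the one I expect to be the main obstacle, is the product bookkeeping: one must multiply the second-term coefficient of the specialized \eqref{8psi88W7} by the product furnished by \eqref{GRII.18}, cancel the numerous common base-$q$ shifted factorials, and arrange the surviving factors into the displayed form. This is routine manipulation using the elementary $q$-shifted factorial and theta identities recorded in the Preliminaries (in particular \eqref{aiden}), but it carries essentially all of the computational weight; no new idea beyond careful tracking of the specialization is needed. A minor point to verify along the way is that the chosen parameter assignment keeps all denominator factors nonzero and stays within the domains of validity of both Theorem~\ref{thmwy} and \eqref{GRII.18}, which, as noted above, reduces to the single stated condition $|b^2|<|a|$.
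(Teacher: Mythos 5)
Your proposal is correct and follows essentially the same route as the paper: the paper specializes Theorem~\ref{thmwy} via $(d,e,g)\mapsto(\frac{qa^2}{b^2c},-b,\frac{qa^2}{b^2f})$ followed by $f\mapsto d$, which assigns the same multisets to the symmetric parameter groups $\{c,d,e\}$ and $\{f,g\}$ as your substitution does, and then sums the resulting ${}_8W_7$ by \eqref{GRII.18} exactly as you describe. The remaining product bookkeeping you flag is likewise all the paper offers ("simplifying the final result completes the proof").
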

\begin{proof}
Start with Theorem~\ref{thmwy} and replace
\[
(d,e,g)\mapsto\left(\frac{qa^2}{b^2c},-b,\frac{qa^2}{b^2f}\right).
\]
Then the resulting very-well-poised ${}_8W_7$ can be summed using \cite[(II.18)]{GaspRah}. Finally replacing $f\mapsto d$ and simplifying the final result completes the proof. 
\end{proof}

\begin{rem}
By taking $b=a$ in Corollaries \ref{cor311} and \ref{cor312}, respectively, it is not difficult to see that these are bilateral extensions of the nonterminating very-well-poised ${}_8W_7$ summations  \cite[(II.16) and (II.18)]{GaspRah} \eqref{GRII.16} and \eqref{GRII.18}, respectively.
\end{rem}

\noindent By equating the Zhang--Zhang and Wei--Yu transformations we can obtain a new transformation for a very-well-poised ${}_8\Psi_8$ series in terms of a ${}_4\psi_4$, an ${}_8W_7$ and two balanced ${}_4\phi_3$ series.
\begin{cor}
Let $0<|q|<1$, $a,b,c,d,e,f,g\in\CCast$ such that
$|q^2a^3|<|bcdefg|$, $|qa|<|fg|$, and such that none of the denominator factors vanish. Then one has the following transformation formula for a very-well-poised ${}_8\Psi_8$, namely 
\begin{eqnarray}
&&\hspace{-0.1cm}\Wpsi88{a}{b,c,d,e,f,g}{q,\frac{q^2a^3}{bcdefg}}\nonumber\\
&&\hspace{0.0cm}=\frac{(qa,\frac{q}{a},\frac{b}{a},\frac{ce}{a},\frac{de}{a},\frac{qa}{cd},\frac{qa}{ce},\frac{qa}{de},\frac{qa}{ef},\frac{qa}{eg},\frac{qa}{fg},\frac{bce}{a},\frac{bde}{a},\frac{q^2a^2}{bcdef},\frac{q^2a^2}{bcdeg};q)_\infty}{(\frac{q}{b},\frac{q}{c},\frac{q}{d},\frac{q}{f},\frac{q}{g},\frac{qa}{c},\frac{qa}{d},\frac{qa}{e},\frac{qa}{f},\frac{qa}{g},\frac{bc}{a},\frac{bd}{a},\frac{qa}{cde},\frac{bcde^2}{qa^2},\frac{q^2a^3}{bcde^2fg};q)_\infty}
\qpsi44{e,\frac{bcde^2}{qa^2},\frac{bcdef}{qa^2},\frac{bcdeg}{qa^2}}{\frac{bce}{a},\frac{bde}{a},\frac{cde}{a},\frac{bcde^2fg}{qa^3}}{q,q}\nonumber\\*
&&\hspace{0.0cm}-\frac{(q,c,d,e,qa,\frac{q}{a},\frac{b}{a},\frac{qb}{c},\frac{qb}{d},\frac{qb}{e},\frac{qa}{bf},\frac{qa}{bg},\frac{qa}{fg},\frac{q^2a^2}{cdef},\frac{q^2a^2}{cdeg};q)_\infty\vt(\frac{qa^2}{bcde};q)}{(\frac{q}{b},\frac{q}{f},\frac{q}{g},\frac{qa}{c},\frac{qa}{d},\frac{qa}{e},\frac{qa}{f},\frac{qa}{g},\frac{bc}{a},\frac{bd}{a},\frac{be}{a},\frac{q^2ab}{cde},\frac{q^2a^3}{bcdefg};q)_\infty\vt(\frac{a}{b},\frac{cde}{a};q)}
\Whyp87{\frac{qab}{cde}}{\frac{qa}{cd},\frac{qa}{ce},\frac{qa}{de},\frac{bf}{a},\frac{bg}{a}}{q,\frac{qa}{fg}}\nonumber\\*
&&\hspace{0.0cm}+
\frac{(q,qa,\frac{q}{a},\frac{b}{a},\frac{qe}{f},\frac{qe}{g},\frac{qa}{cd},\frac{qa}{ce},\frac{qa}{de},\frac{qa}{ef},\frac{qa}{eg},\frac{bcde}{a^2},\frac{q^2a^3}{bcdefg};q)_\infty\vt(\frac{bce}{a},\frac{bde}{a};q)}{(\frac{q}{b},\frac{q}{c},\frac{q}{d},\frac{q}{e},\frac{q}{f},\frac{q}{g},\frac{qa}{c},\frac{qa}{d},\frac{qa}{e},\frac{qa}{f},\frac{qa}{g},\frac{bc}{a},\frac{bd}{a},\frac{be}{a},\frac{q^2a^3}{bcdefg};q)_\infty\vt(\frac{qa^2}{bcde^2};q)}\qhyp43{\frac{be}{a},\frac{ce}{a},\frac{de}{a},\frac{qa}{fg}}{\frac{qe}{f},\frac{qe}{g},\frac{bcde}{a^2}}{q,q}\nonumber\\*
&&\hspace{0.0cm}+
\frac{(q,e,qa,\frac{q}{a},\frac{b}{a},\frac{ce}{a},\frac{de}{a},\frac{qa}{cd},\frac{qa}{ce},\frac{qa}{de},\frac{q^2a^2}{befg},\frac{q^2a^2}{cefg},\frac{q^2a^2}{defg};q)_\infty\vt(\frac{bcdef}{qa^2},\frac{bcdeg}{qa^2};q)}{(\frac{q}{b},\frac{q}{c},\frac{q}{d},\frac{q}{f},\frac{q}{g},\frac{qa}{c},\frac{qa}{d},\frac{qa}{e},\frac{qa}{f},\frac{qa}{g},\frac{bc}{a},\frac{bd}{a},\frac{q^2a^3}{bcdefg};q)_\infty\vt(\frac{cde}{a},\frac{bcde^2fg}{q^2a^3};q)}\qhyp43{\frac{qa}{ef},\frac{qa}{eg},\frac{qa}{fg},\frac{q^2a^3}{bcdefg}}{\frac{q^2a^2}{befg},\frac{q^2a^2}{cefg},\frac{q^2a^2}{defg}}{q,q}.
\label{bigtrans88}
\end{eqnarray}
\label{thm312}
\end{cor}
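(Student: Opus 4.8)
The plan is to use the fact that Theorems~\ref{thm22} and \ref{thmwy} give two different right-hand sides for one and the same very-well-poised series $\Wpsi88{a}{b,c,d,e,f,g}{q,\frac{q^2a^3}{bcdefg}}$. Equating those two right-hand sides and solving the resulting linear relation for the \emph{secondary} $_8\Psi_8$ produced by Wei--Yu, followed by a single substitution of variables, will deliver \eqref{bigtrans88}.

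Concretely, I would set the right-hand side of \eqref{8psi84psi4} equal to the right-hand side of \eqref{8psi88W7}. The Wei--Yu side contains the series $\Wpsi88{\frac{bcde}{qa}}{b,c,d,e,\frac{bcdef}{qa^2},\frac{bcdeg}{qa^2}}{q,\frac{qa}{fg}}$ multiplied by its coefficient (call it $B_1$) together with its $_8W_7$ term (coefficient $B_2$, carrying the theta factors $\vt(\frac{bcde}{qa^2};q)$ and $\vt(\frac{cde}{qa};q)$). Moving the $_8W_7$ term across and dividing by $B_1$ then writes that secondary $_8\Psi_8$ as $\frac{1}{B_1}$ times the three Zhang--Zhang terms of \eqref{8psi84psi4} minus $\frac{B_2}{B_1}$ times the $_8W_7$ of base $\frac{b^2}{a}$. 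The minus sign that decorates the $_8W_7$ in \eqref{bigtrans88} is already visible at this point.

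Next comes the relabeling. The involutive substitution
\[
(a,f,g)\longmapsto\Big(\tfrac{bcde}{qa},\,\tfrac{bcdef}{qa^2},\,\tfrac{bcdeg}{qa^2}\Big),\qquad b,c,d,e\ \text{fixed},
\]
restores the generic $\Wpsi88{a}{b,c,d,e,f,g}{q,\frac{q^2a^3}{bcdefg}}$ on the left, and interchanges the two convergence conditions $|q^2a^3|<|bcdefg|$ and $|qa|<|fg|$, so that both appear in the hypotheses. A routine check of the arguments shows that under this substitution the Zhang--Zhang $_4\psi_4$ turns into $\qpsi44{e,\frac{bcde^2}{qa^2},\frac{bcdef}{qa^2},\frac{bcdeg}{qa^2}}{\frac{bce}{a},\frac{bde}{a},\frac{cde}{a},\frac{bcde^2fg}{qa^3}}{q,q}$, the two balanced $_4\phi_3$'s turn into the third and fourth series of \eqref{bigtrans88}, and the $_8W_7$ of base $\frac{b^2}{a}$ turns into $\Whyp87{\frac{qab}{cde}}{\frac{qa}{cd},\frac{qa}{ce},\frac{qa}{de},\frac{bf}{a},\frac{bg}{a}}{q,\frac{qa}{fg}}$ --- exactly the four series displayed on the right of \eqref{bigtrans88}. (Equivalently, one may keep the original $_8\Psi_8$ on the left and instead pass from the base-$\frac{b^2}{a}$ to the base-$\frac{qab}{cde}$ form of the $_8W_7$ by the two-term transformation \cite[(III.23)]{GaspRah}.)

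The substance of the proof, and its main obstacle, is the simplification of the four prefactors. After the substitution each coefficient is a large quotient of infinite $q$-shifted factorials; the $_4\psi_4$ coefficient reduces to a pure ratio of $q$-shifted factorials with no theta factor, the $_8W_7$ coefficient inherits the relabeled Wei--Yu theta factors, and the two $_4\phi_3$ coefficients acquire theta factors only after pairing surviving $q$-shifted factorials. I would cancel the many common $(\,\cdot\,;q)_\infty$ factors and then repeatedly apply the definition $\vt(z;q)=(z,\tfrac qz;q)_\infty$ to recognize pairs $\{z,\tfrac qz\}$, together with the reflection $\vt(z;q)=-z\,\vt(z^{-1};q)$ (immediate from \eqref{tfdef} and \eqref{aiden}), both to produce the exact theta arguments $\vt(\frac{qa^2}{bcde};q)$, $\vt(\frac ab,\frac{cde}a;q)$, $\vt(\frac{bce}a,\frac{bde}a;q)$, $\vt(\frac{qa^2}{bcde^2};q)$, $\vt(\frac{bcdef}{qa^2},\frac{bcdeg}{qa^2};q)$, $\vt(\frac{cde}a,\frac{bcde^2fg}{q^2a^3};q)$ appearing in \eqref{bigtrans88} and to pin down the remaining signs. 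This reduction is long but completely mechanical, and no new idea is needed beyond careful bookkeeping.
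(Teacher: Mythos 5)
Your proposal is correct and follows essentially the same route as the paper: equate the right-hand sides of the Zhang--Zhang transformation \eqref{8psi84psi4} and the Wei--Yu transformation \eqref{8psi88W7}, solve for the one remaining ${}_8\Psi_8$ (the series of base $\frac{bcde}{qa}$ on the Wei--Yu side), and relabel the variables. Your substitution $(a,f,g)\mapsto\big(\frac{bcde}{qa},\frac{bcdef}{qa^2},\frac{bcdeg}{qa^2}\big)$ is the correct literal form of the replacement (the paper's $\frac{qa^2f}{bcde}$ only agrees if its $a$ is read as the already-replaced one), and it indeed carries the ${}_4\psi_4$, the ${}_8W_7$ and the two balanced ${}_4\phi_3$'s onto exactly the four series displayed in \eqref{bigtrans88}.
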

\begin{proof}
First equate \eqref{8psi84psi4} and \eqref{8psi88W7}, then making the replacements
\[
(a,f,g)\mapsto\left(\frac{bcde}{qa},\frac{qa^2f}{bcde},\frac{qa^2g}{bcde}\right),
\]
and solving for the ${}_8\Psi_8$ completes the proof.
\end{proof}

\begin{rem}
The $b\to a$ limit of Corollary~\ref{thm312} is equivalent to
Gasper and Rahman \cite[(III.23)]{GaspRah}.     
\end{rem}

\section{Very-well-poised bilateral transformations as limiting cases}
\label{sec:lim}
In this section we consider transformations of ${}_{8-p}\Psi_{8-p}^p$ series, for $1\le p\le 6$, obtained as limiting cases
of transformations for the ${}_8\Psi_8$ series presented above. First we consider the case $p=1$, namely transformations
for the ${}_7\Psi_7^1$ series.

\subsection{Transformations for the ${}_7\Psi_7^1$ series}
\noindent Starting with 
\eqref{psi888W7}
%Theorem~\ref{thm31} 
leads to a nice corollary as a transformation of a very-well-poised ${}_7\Psi_7^1$.
Note that in the following transformation, while the left-hand side is manifestly symmetric in the five variables $b,c,d,e,f$, the right-hand side satisfies the same symmetry (which otherwise is not obvious).
\begin{cor}
\label{cor42}
Let $0<|q|<1$, $a,b,c,d,e,f\in\CCast$, $|q|<\max\{|de/a|,|df/a|,|cd/a|\}<1$. 
%|qa|<|de|$, $|qa|<|df|$. 
Then one has the following transformation for a very-well-poised ${}_7\Psi_7^1$ series
%in terms of two nonterminating ${}_3\phi_2$'s, namely 
\begin{eqnarray}
&&\hspace{-1.2cm}
\qpWpsi771{a}{b,c,d,e,f}{q,\frac{q^2a^3}{bcdef}}
%=\qpsi78{\pm q\sqrt{a},b,c,d,e,f}{\pm\sqrt{a},\frac{qa}{b},
%\frac{qa}{c},\frac{qa}{d},\frac{qa}{e},\frac{qa}{f},0}{q,\frac{q^2a^3}{bcdef}}
\nonumber\\[-0.0cm]
&&\hspace{0cm}
=
\frac{(q,qa,\frac{q}{a},e,f,\frac{e}{a},\frac{f}{a},\frac{qa}{de},\frac{qa}{df};q)_\infty}{(\frac{q}{b},\frac{q}{c},\frac{q}{d},\frac{qa}{b},\frac{qa}{c},\frac{qa}{d},\frac{ef}{a};q)_\infty} 
\II{e;f}
\frac{(\frac{qe}{b},\frac{qe}{c},\frac{qa}{be},\frac{qa}{ce};q)_\infty}{
(
%e,\frac{q}{e},
\frac{e}{a},\frac{f}{e},\frac{qa}{e},\frac{qe}{f};q)_\infty\vt(e;q)}\qhyp32{\frac{qa}{bc},\frac{de}{a},\frac{ef}{a}}{\frac{qe}{b},\frac{qe}{c}}{q,\frac{qa}{df}}\label{psi783phi2}
\\
&&\hspace{-0.0cm}
=
\frac{(qa,\frac{q}{a},\frac{qa}{be},\frac{qa}{bf},\frac{qa}{cd},\frac{qa}{ef};q)_\infty}
{(\frac{q}{c},\frac{q}{d},\frac{qa}{b},\frac{qa}{e},\frac{qa}{f},\frac{qa}{bef};q)_\infty}
\qpsi33{b,e,f}{\frac{qa}{c},\frac{qa}{d},\frac{bef}{a}}{q,q}\nonumber\\*
&&\hspace{1cm}+\frac{(q,qa,\frac{q}{a},b,e,f,\frac{qa}{cd},\frac{q^2a^2}{bcef},\frac{q^2a^2}{bdef};q)_\infty}{(\frac{q}{c},\frac{q}{d},\frac{qa}{b},\frac{qa}{c},\frac{qa}{d},\frac{qa}{e},\frac{qa}{f};q)_\infty\vt(\frac{bef}{qa}
%,\frac{q^2a}{bef}
;q)
%_\infty
}\qhyp32{\frac{qa}{be},\frac{qa}{bf},\frac{qa}{ef}}{\frac{q^2a^2}{bcef},\frac{q^2a^2}{bdef}}{q,q}
\label{7psi83psi3}\\*
&&\hspace{0.0cm}
%\qpWpsi771{a}{b,c,d,e,f}{q,\frac{q^2a^3}{bcdef}}
=\frac{(qa,\frac{q}{a},\frac{b}{a},\frac{qa}{cd},\frac{qa}{ce},\frac{qa}{de},\frac{bcd}{a},\frac{bce}{a},\frac{bde}{a},\frac{q^2a^2}{bcdef};q)_\infty}{(\frac{q}{f},\frac{qa}{c},\frac{qa}{d},\frac{qa}{e},\frac{bc}{a},\frac{bd}{a},\frac{be}{a},\frac{qa}{cde},\frac{bcde}{a},\frac{q^2a}{bcde};q)_\infty}\qpWpsi771{\frac{bcde}{qa}}{b,c,d,e,\frac{bcdef}{qa^2}}{q,\frac{bcde}{af}}\nonumber\\*
&&\hspace{1cm}+\frac{(q,qa,\frac{q}{a},c,d,e,\frac{qb}{c},\frac{qb}{d},\frac{qa}{bf},\frac{qa}{ef},\frac{bcde}{qa^2},\frac{q^2a^2}{bcde};q)_\infty}
{(\frac{q}{b},\frac{q}{f},\frac{qa}{b},\frac{qa}{c},\frac{qa}{d},\frac{qa}{e},\frac{qa}{f},\frac{bc}{a},\frac{bd}{a},\frac{be}{a},\frac{cde}{qa},\frac{q^2a}{cde};q)_\infty}\qhyp32{\frac{be}{a},\frac{bf}{a},\frac{qa}{cd}}{\frac{qb}{c},\frac{qb}{d}}{q,\frac{qa}{ef}}\label{7psi73phi2}\\
&&\hspace{0.0cm}=\frac{\frac{a}{cd}(q,qa,\frac{q}{a},b,d,\frac{b}{a},\frac{c}{a},\frac{qc}{b},\frac{qc}{d},\frac{qa}{bd},\frac{qa}{be},\frac{qa}{cd},\frac{qa}{ce},\frac{qa}{cf},\frac{qa}{ef};q)_\infty}{(qd,\frac{1}{d},\frac{q}{c},\frac{q}{e},\frac{q}{f},\frac{a}{c},\frac{b}{c},\frac{qa}{d},\frac{qa}{e},\frac{qa}{f},\frac{qc}{a},\frac{qc}{b},\frac{bc}{a},\frac{qa}{bd},\frac{qa}{be};q)_\infty}\qhyp32{\frac{ce}{a},\frac{cf}{a},\frac{qa}{bd}}{\frac{qc}{b},\frac{qc}{d}}{q,\frac{qa}{ef}}\nonumber\\*
&&\hspace{1cm}-\frac{\frac{a}{cd}(q,qa,\frac{q}{a},c,d,\frac{c}{a},\frac{b}{a},\frac{qb}{c},\frac{qb}{d},\frac{qa}{bd},\frac{qa}{be},\frac{qa}{bf},\frac{qa}{cd},\frac{qa}{ce},\frac{qa}{cd};q)_\infty}{(qd,\frac{1}{d},\frac{q}{b},\frac{q}{e},\frac{q}{f},\frac{a}{b},\frac{b}{c},\frac{qa}{d},\frac{qa}{e},\frac{qa}{f},\frac{qb}{a},\frac{qc}{b},\frac{bc}{a},\frac{qa}{cd},\frac{qa}{ce};q)_\infty}\qhyp32{\frac{be}{a},\frac{bf}{a},\frac{qa}{cd}}{\frac{qb}{c},\frac{qb}{d}}{q,\frac{qa}{ef}}.
\label{7psi12phi32}
\end{eqnarray}
%Note that this transformation is invariant under parameter interchange of the five variables $b,c,d,e,f$.
\end{cor}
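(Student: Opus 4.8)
The plan is to obtain all four right-hand sides as the single limit $g\to\infty$ of the four ${}_8\Psi_8$ transformations established above, exploiting that this limit sends the very-well-poised ${}_8\Psi_8$ to the ${}_7\Psi_7^1$ on the left. Indeed, in the summand of $\Wpsi88{a}{b,c,d,e,f,g}{q,\frac{q^2a^3}{bcdefg}}$ the only $g$-dependence sits in the factor $\frac{(g;q)_k}{(qa/g;q)_k}\big(\frac{q^2a^3}{bcdefg}\big)^k$; by \eqref{critlim} this tends to $(-1)^kq^{\binom k2}\big(\frac{q^2a^3}{bcdef}\big)^k$, which is exactly the summand of $\qpWpsi771{a}{b,c,d,e,f}{q,\frac{q^2a^3}{bcdef}}$ (the superscript $1$ accounting for the surviving $(-1)^kq^{\binom k2}$). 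The same bookkeeping, applied to each displayed ${}_8\Psi_8$ and ${}_8W_7$ on the various right-hand sides, drives the whole argument.

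First I would prove \eqref{psi783phi2} by taking $g\to\infty$ in Slater's \eqref{psi888W7}: the idem $\II{f;g}$ is first rewritten (using the full $b,c,d,e,f,g$-symmetry of \eqref{psi888W7}) as an idem over the two variables that are to survive, and after the limit the inner ${}_8W_7$ degenerates to a ${}_7W_6^1$, which \eqref{W761tran} turns into the stated ${}_3\phi_2$ with argument $\frac{qa}{df}$. Next I would prove \eqref{7psi73phi2} the same way from the Wei--Yu transformation \eqref{8psi88W7}: a short very-well-poised computation shows that the inner ${}_8\Psi_8$ of \eqref{8psi88W7} limits to exactly the ${}_7\Psi_7^1$ displayed in \eqref{7psi73phi2} (its argument collapsing from $\frac{qa}{fg}$ to $\frac{bcde}{af}$), while the very-well-poised ${}_8W_7$ again degenerates to a ${}_7W_6^1$ and is rewritten by \eqref{W761tran} as the accompanying ${}_3\phi_2$. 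In both cases the remaining work is to track the infinite-product prefactors through $g\to\infty$ using \eqref{critlim} and \eqref{aiden}, noting that every factor of the form $(\text{const}/g;q)_\infty$ tends to the trivial $(0;q)_\infty=1$.

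For \eqref{7psi83psi3} I would take $g\to\infty$ in the Zhang--Zhang transformation \eqref{8psi84psi4}. Here the bilateral ${}_4\psi_4$ limits to a ${}_3\psi_3$ and, after a bilateral reversal $k\mapsto-k$, is brought to the form $\qpsi33{b,e,f}{\frac{qa}{c},\frac{qa}{d},\frac{bef}{a}}{q,q}$; one of the two balanced ${}_4\phi_3$ terms survives as the stated ${}_3\phi_2$, while the third term is killed because its prefactor contains the ratio $\frac{(g;q)_\infty}{(\frac{efg}{qa};q)_\infty}$, which tends to $0$. Finally, \eqref{7psi12phi32} I would obtain either as the $g\to\infty$ limit of the combined four-term transformation \eqref{bigtrans88} of Corollary~\ref{thm312}, or, more cheaply, by invoking the manifest symmetry of the left-hand side in $b,c,d,e,f$ to re-expand \eqref{psi783phi2} with its idem taken over the pair $(b,c)$ and then applying the three-term transformation \eqref{3phi2nbtran} together with the theta addition formula \eqref{tfa} to assemble the signed difference of the two ${}_3\phi_2$'s carrying the overall factor $\frac{a}{cd}$.

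The main obstacle is the prefactor bookkeeping rather than any deep new idea: one must verify, term by term, which infinite products combine to finite nonzero limits, which collapse to $(0;q)_\infty=1$, and---most delicately---which \emph{entire} terms vanish because a prefactor ratio $\frac{(g;q)_\infty}{(\gamma g;q)_\infty}$ tends to $0$, and must then correctly match the two superficially different ${}_3\psi_3$'s produced by the Slater- and Zhang--Zhang-type routes (they differ only by a bilateral reversal). A secondary, routine point---exactly as flagged in the footnote to Theorem~\ref{thm337H7}---is the rigorous justification of the term-wise $g\to\infty$ limits via a Koornwinder-type dominated-convergence argument; I would relegate this to a remark.
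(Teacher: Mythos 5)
Your treatment of \eqref{psi783phi2} and \eqref{7psi73phi2} is essentially the paper's: a single confluent limit of \eqref{psi888W7} (the paper sends $e\to\infty$ so that the idem pair survives automatically, which is equivalent to your $g\to\infty$ after re-expanding the idem) and the $g\to\infty$ limit of \eqref{8psi88W7}, each followed by \eqref{W761tran}. The genuine gap is in \eqref{7psi83psi3}. Taking $g\to\infty$ in the Zhang--Zhang transformation \eqref{8psi84psi4} does kill the third term, but the ${}_4\psi_4$ there carries the $g$-dependent ratio $(g;q)_k/(\frac{efg}{a};q)_k$, which tends to $(\frac{a}{ef})^k$; so the surviving bilateral series is a ${}_3\psi_3$ with argument $\frac{qa}{ef}$, namely $\qpsi33{e,f,\frac{qa^2}{bcd}}{\frac{qa}{b},\frac{qa}{c},\frac{qa}{d}}{q,\frac{qa}{ef}}$. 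That is exactly the separate identity the paper records as Corollary~\ref{cor4x}, \emph{not} \eqref{7psi83psi3}. Your proposed repair by a bilateral reversal $k\mapsto-k$ does not close the gap: reversing that series yields $\qpsi33{\frac ba,\frac ca,\frac da}{\frac qe,\frac qf,\frac{bcd}{a^2}}{q,q}$, which is not of the form $\qpsi33{b,e,f}{\frac{qa}{c},\frac{qa}{d},\frac{bef}{a}}{q,q}$ under any relabeling of $b,\dots,f$ (all parameters would have to be rescaled by $a$, which changes the series). The paper's route is different and essential here: set $b\mapsto q^{-n}a$ in \eqref{eq:ZZc}/\eqref{8psi84psi4}, which annihilates the \emph{second} term through the factor $(\frac ba;q)_\infty=(q^{-n};q)_\infty=0$, turns the argument-$q$ ${}_4\psi_4$ directly into the argument-$q$ ${}_3\psi_3$ of \eqref{7psi83psi3} as $n\to\infty$, and leaves the correct ${}_3\phi_2$.

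A secondary problem of the same kind affects your first suggestion for \eqref{7psi12phi32}: the $g\to\infty$ limit of \eqref{bigtrans88} is what the paper uses to prove Corollary~\ref{cor45}, a three-term identity involving a ${}_3\psi_3$, and it does not reduce to the two-term difference of ${}_3\phi_2$'s in \eqref{7psi12phi32}. Your fallback (exploiting the $b,c,d,e,f$-symmetry and combining prefactors with the theta addition formula \eqref{tfa}) is close in spirit to what the paper actually does, which is to substitute \eqref{psi783phi2} into \eqref{7psi73phi2}, merge the two copies of the same ${}_3\phi_2$ via \eqref{tfa}, and transform the remaining ${}_3\phi_2$ with \eqref{eq:32-1}; but as written your proposal leaves the decisive steps for both \eqref{7psi83psi3} and \eqref{7psi12phi32} unestablished.
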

\begin{proof}
Start with \eqref{psi888W7} and take the limit as $e\to\infty$. This converts the very-well-poised ${}_8\Psi_8$ to the very-well-poised  ${}_7\Psi_7^1$  given on the left-hand side of \eqref{psi783phi2}. Taking the limit of both ${}_8W_7$'s on the right-hand side of  \eqref{psi888W7} 
as $e\to\infty$ converts both ${}_8W_7$'s to ${}_7W_7$'s with one vanishing denominator element.
These can be converted to ${}_3\phi_2$'s using 
the transformation \eqref{W761tran}, establishing \eqref{psi783phi2}.
Starting with \eqref{8psi84psi4} instead and letting $b\mapsto q^{-n}a$ (or $c\mapsto q^{-n}a$),
the coefficient of the second series on the right-hand side vanishes.
After taking the limit $n\to\infty$, the identity becomes \eqref{7psi83psi3} up to a simple substitution of variables.
To get \eqref{7psi73phi2} start with \eqref{8psi88W7}, let $g\to\infty$ and apply \eqref{W761tran}.
Finally, we sketch how to get \eqref{7psi12phi32}: Apply \eqref{psi783phi2} to \eqref{7psi73phi2} to obtain a transformation 
of a  very-well-poised  ${}_7\Psi_7^1$ series into a sum of three  ${}_3\phi_2$ series of which two ${}_3\phi_2$ series
are the same but have different prefactors. The two different prefactors of the same ${}_3\phi_2$ series can be
combined by virtue of the addition formula for the modified theta function in \eqref{tfa}.
The third ${}_3\phi_2$ series can finally be transformed using \eqref{eq:32-1}, completing the
sketch of proof of \eqref{7psi12phi32} (while leaving the full details to the reader).
%and then applying this transformation twice and then replacing $g\mapsto e$ produces \eqref{psi783phi2}. For \eqref{7psi83psi3}, start with Theorem~\ref{thm22} and take the limit as $|b|\to\infty$. Then the ${}_8\Psi_8$ on the left-hand side of \eqref{8psi84psi4} becomes a very-well-poised ${}_7\Psi_7^1$. The ${}_4\psi_4$ on the right-hand side of \eqref{8psi84psi4} becomes the ${}_3\psi_3$ on the right-hand side of \eqref{7psi83psi3}. The limit as $|b|\to\infty$ of the first balanced ${}_4\phi_3$ vanishes and the limit of the second balanced ${}_4\phi_3$ becomes the ${}_3\phi_2$ on the 
%right-hand side of \eqref{7psi83psi3}.
%Replacing $g\mapsto b$ in the derived expression completes the proof of \eqref{7psi83psi3}.
%In order to derive 
%In order to derive \eqref{7psi83psi3}, start with \eqref{8psi88W7} and take the limit as $g\to\infty$
\end{proof}
\begin{cor}
\label{cor4x}
Let $0<|q|<1$, $a,b,c,d,e,f\in\CCast$, $|q|<\max\{|de/a|,|df/a|,|cd/a|\}<1$. 
%|qa|<|de|$, $|qa|<|df|$. 
Then one has the following transformation for a very-well-poised ${}_7\Psi_7^1$ 
%in terms of two nonterminating ${}_3\phi_2$'s, namely 
\begin{eqnarray}
&&\hspace{-1.2cm}
\qpWpsi771{a}{b,c,d,e,f}{q,\frac{q^2a^3}{bcdef}}=\frac{(qa,\frac{q}{a},\frac{qa}{bc},\frac{qa}{bd},\frac{qa}{cd},\frac{qa}{ef};q)_\infty}{(\frac{q}{b},\frac{q}{c},\frac{q}{d},\frac{qa}{e},\frac{qa}{f},\frac{qa^2}{bcd};q)_\infty}\qpsi33{e,f,\frac{qa^2}{bcd}}{\frac{qa}{b},\frac{qa}{c},\frac{qa}{d}}{q,\frac{qa}{ef}}\nonumber\\*
&&\hspace{0.0cm}+\frac{(q,qa,\frac{q}{a},\frac{b}{a},\frac{c}{a},\frac{d}{a},\frac{qa}{ef},\frac{q^2a^2}{bcde},\frac{q^2a^2}{bcdf};q)_\infty}{(\frac{q}{b},\frac{q}{c},\frac{q}{d},\frac{q}{e},\frac{q}{f},\frac{qa}{e},\frac{qa}{f};q)_\infty\vt(\frac{bcd}{qa^2};q)}\qhyp32{\frac{qa}{bc},\frac{qa}{bd},\frac{qa}{cd}}{\frac{q^2a^2}{bcde},\frac{q^2a^2}{bcdf}}{q,q}.
\label{cor42eq}
\end{eqnarray}
\end{cor}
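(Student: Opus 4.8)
The plan is to obtain \eqref{cor42eq} as a limiting case of the Zhang--Zhang transformation \eqref{8psi84psi4}, sending the parameter $g$ to infinity along the sequence $g=q^{-m}$, $m\in\N_0$. This is the exact analogue of the device already used to derive \eqref{7psi83psi3}, where $b\mapsto q^{-n}a$ followed by $n\to\infty$ annihilated one of the three terms on the right-hand side of \eqref{8psi84psi4}; the only differences are that here it is the \emph{third} term that is removed and that, since $g$ occurs undivided by $a$, no final relabelling of variables is needed.

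First I would set $g=q^{-m}$ in \eqref{8psi84psi4}. The coefficient of the third term carries the infinite product $(g;q)_\infty$, and since $(q^{-m};q)_\infty=0$ for every $m\in\N_0$, that term vanishes identically; for generic $a,b,c,d,e,f$ no denominator factor of the third coefficient vanishes at the same time, so no $0/0$ indeterminacy arises. The coefficients of the first and second terms contain $g$ only through factors such as $(\tfrac{qa}{eg},\tfrac{qa}{fg},\tfrac{q^2a^2}{bcdg};q)_\infty$, which stay finite and nonzero, so those two terms survive. For each $m$ one is thus left with a two-term identity.

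Next I would let $m\to\infty$, i.e.\ $g=q^{-m}\to\infty$, passing to the limit termwise by means of \eqref{critlim}. On the left, the only $g$-dependence of the ${}_8\Psi_8$ summand is the factor $(g;q)_k\,g^{-k}/(\tfrac{qa}{g};q)_k$, which by \eqref{critlim} tends to $(-1)^kq^{\binom k2}$; this extra factor converts the ${}_8\Psi_8$ into the ${}_7\Psi_7^1$ of argument $\tfrac{q^2a^3}{bcdef}$ on the left of \eqref{cor42eq}. In the first term the numerator parameter $g$ is paired with the denominator parameter $\tfrac{efg}{a}$, and \eqref{critlim} sends their ratio to $(\tfrac{a}{ef})^k$, so the ${}_4\psi_4$ becomes the ${}_3\psi_3$ of argument $\tfrac{qa}{ef}$, while the surviving $g$-free products reproduce the first coefficient of \eqref{cor42eq}. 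In the second term both the numerator entry $\tfrac{q^2a^3}{bcdefg}$ and the denominator entry $\tfrac{q^2a^2}{bcdg}$ tend to $0$, collapsing the ${}_4\phi_3$ to the stated ${}_3\phi_2$; in its coefficient the factors $(\tfrac{bcd}{qa^2},\tfrac{q^2a^2}{bcd};q)_\infty$ recombine, via the product form \eqref{tfdef}, into $\vt(\tfrac{bcd}{qa^2};q)$, matching the second coefficient of \eqref{cor42eq}.

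The main obstacle is not any single clever step but the rigorous justification of interchanging the $m\to\infty$ limit with the bilateral and unilateral summations. As for Theorem~\ref{thm337H7}, this requires a convergent majorant independent of $m$ so that dominated convergence applies, in the spirit of Koornwinder's appendix; I would relegate these estimates to a remark. The remaining work---tracking which infinite products survive each limit and confirming the recombination into the theta quotient---is routine bookkeeping that I would leave to the reader.
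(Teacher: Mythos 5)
Your proof is correct and follows the same route as the paper: both start from the Zhang--Zhang transformation \eqref{8psi84psi4} and take the limit $g\to\infty$, under which the third term disappears and the remaining two terms degenerate termwise (via \eqref{critlim}) to the ${}_3\psi_3$ and ${}_3\phi_2$ of \eqref{cor42eq}, with $(\frac{bcd}{qa^2},\frac{q^2a^2}{bcd};q)_\infty$ recombining into the theta factor. Your device of discretizing $g=q^{-m}$ so that $(g;q)_\infty=0$ kills the third term identically is a slightly cleaner way of seeing its vanishing than the continuous limit, but the argument is otherwise the one the paper gives.
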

\begin{proof}
This result is obtained by starting with \eqref{8psi84psi4} and taking the limit as $g\to\infty$. The final term vanishes and this completes the proof.
\end{proof}
By starting with Theorem~\ref{thm312} and taking the limit as $g\to \infty$ one obtains a new transformation
for a ${}_7\Psi_7^1$ in terms of a ${}_3\psi_3$ and two ${}_3\phi_2$'s.
\begin{cor}
Let $0<|q|<1$, $a,b,c,d,e,f\in\CCast$, $|qa|<|ef|$. 
%|qa|<|de|$, $|qa|<|df|$. 
Then one has the following transformation for a very-well-poised ${}_7\Psi_7^1$ series:
\begin{eqnarray}
&&\hspace{-0.1cm}
\qpWpsi771{a}{b,c,d,e,f}{q,\frac{q^2a^3}{bcdef}}
%\nonumber\\
%&&\hspace{0.0cm}
=\frac{(qa,\frac{q}{a},\frac{b}{a},\frac{ce}{a},\frac{de}{a},\frac{qa}{cd},\frac{qa}{ce},\frac{qa}{de},\frac{qa}{ef},\frac{bce}{a},\frac{bde}{a},\frac{q^2a^2}{bcdef};q)_\infty}{(\frac{q}{b},\frac{q}{c},\frac{q}{d},\frac{q}{f},\frac{qa}{c},\frac{qa}{d},\frac{qa}{e},\frac{qa}{f},\frac{bc}{a},\frac{bd}{a},\frac{qa}{cde},\frac{bcde^2}{qa^2};q)_\infty}\qpsi33{e,\frac{bcde^2}{qa^2},\frac{bcdef}{qa^2}}{\frac{bce}{a},\frac{bde}{a},\frac{cde}{a}}{q,\frac{qa}{ef}}\nonumber\\*
&&\hspace{2.0cm}-\frac{(q,c,d,e,qa,\frac{q}{a},\frac{b}{a},\frac{qb}{c},\frac{qb}{d},\frac{qa}{bf},\frac{qa}{ef};q)_\infty\vt(\frac{qa^2}{bcde};q)}{(\frac{q}{b},\frac{q}{f},\frac{qa}{c},\frac{qa}{d},\frac{qa}{e},\frac{qa}{f},\frac{bc}{a},\frac{bd}{a},\frac{be}{a};q)_\infty\vt(\frac{a}{b},\frac{cde}{a};q)}\qhyp32{\frac{be}{a},\frac{bf}{a},\frac{qa}{cd}}{\frac{qb}{c},\frac{qb}{d}}{q,\frac{qa}{ef}}\nonumber\\*
&&\hspace{2.2cm}+\frac{(q,qa,\frac{q}{a},\frac{b}{a},\frac{qe}{f},\frac{qa}{cd},\frac{qa}{ce},\frac{qa}{de},\frac{qa}{ef},\frac{bcde}{a^2};q)_\infty\vt(\frac{bce}{a},\frac{bde}{a};q)}{(\frac{q}{b},\frac{q}{c},\frac{q}{d},\frac{q}{e},\frac{q}{f},\frac{qa}{c},\frac{qa}{d},\frac{qa}{e},\frac{qa}{f},\frac{bc}{a},\frac{bd}{a},\frac{be}{a};q)_\infty\vt(\frac{qa^2}{bcde^2};q)}
\qhyp32{\frac{be}{a},\frac{ce}{a},\frac{de}{a}}{\frac{qe}{f},\frac{bcde}{a^2}}{q,q}.
\label{7Psi713psi3trans}
\end{eqnarray}
\label{cor45}
\end{cor}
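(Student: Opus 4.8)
The plan is to start from the four-term transformation \eqref{bigtrans88} of Corollary~\ref{thm312} and pass to the limit $g\to\infty$, following each of its four right-hand terms separately. On the left-hand side the well-poised pair $g,\tfrac{qa}{g}$, together with the factor $g^{-k}$ coming from the argument $\tfrac{q^2a^3}{bcdefg}$, contributes by \eqref{critlim} a factor $(-1)^kq^{\binom k2}$ to the $k$-th summand; this is precisely the extra power that upgrades a ${}_7\Psi_7$ to a ${}_7\Psi_7^1$, and the residual argument is $\tfrac{q^2a^3}{bcdef}$, so the left-hand side becomes the very-well-poised ${}_7\Psi_7^1$ of the statement. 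The expectation is that three of the four right-hand terms survive (yielding the $_3\psi_3$ and two $_3\phi_2$ series), while the fourth drops out.

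For the first term, the numerator parameter $\tfrac{bcdeg}{qa^2}$ and the denominator parameter $\tfrac{bcde^2fg}{qa^3}$ of the $_4\psi_4$ tend to infinity with fixed ratio, so by \eqref{critlim} their quotient contributes $(\tfrac{a}{ef})^k$ and the $_4\psi_4$ collapses to the $_3\psi_3$ of argument $\tfrac{qa}{ef}$, every $g$-dependent factor of its prefactor tending to $1$. For the third term, the numerator parameter $\tfrac{qa}{fg}$ and the denominator parameter $\tfrac{qe}{g}$ of the first $_4\phi_3$ both tend to $0$; the corresponding factors $(0;q)_k$ cancel and the series reduces to the $_3\phi_2$ of argument $q$, again with prefactor factors tending to $1$. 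For the second term, the parameter $\tfrac{bg}{a}$ of the $_8W_7$ tends to infinity while its argument $\tfrac{qa}{fg}$ tends to $0$; by \eqref{critlim} this produces a $_7W_6^1$ with base $\tfrac{qab}{cde}$, remaining parameters $\tfrac{qa}{cd},\tfrac{qa}{ce},\tfrac{qa}{de},\tfrac{bf}{a}$, and argument $\tfrac{qb}{f}$. I would then apply \eqref{W761tran} with $(a,b,c,d,e)\mapsto(\tfrac{qab}{cde},\tfrac{qa}{ce},\tfrac{qa}{de},\tfrac{qa}{cd},\tfrac{bf}{a})$: one checks that the required very-well-poised argument $\tfrac{q^2a^2}{bcde}$ of \eqref{W761tran} evaluates under this substitution to exactly $\tfrac{qb}{f}$, so the hypothesis is met, and the transformation turns the $_7W_6^1$ into precisely the $_3\phi_2$ of argument $\tfrac{qa}{ef}$ in the corollary, its new prefactor combining with the surviving ($g$-free) one.

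The decisive point is the fourth term. Its $_4\phi_3$ has three numerator and three denominator parameters tending to $0$, so it reduces to the convergent $_1\phi_0(\tfrac{qa}{ef};-;q,q)$, which is bounded and summable by the $q$-binomial theorem; all infinite products in its prefactor tend to $1$, and the sole surviving $g$-dependence is the theta quotient $\vt(\tfrac{bcdeg}{qa^2};q)\big/\vt(\tfrac{bcde^2fg}{q^2a^3};q)$. Writing $\vt(z;q)=(z,\tfrac qz;q)_\infty$ and using $(\tfrac qz;q)_\infty\to1$ as $z\to\infty$, this is asymptotic to $(\tfrac{bcdeg}{qa^2};q)_\infty\big/(\tfrac{bcde^2fg}{q^2a^3};q)_\infty$, a ratio of the form $(z;q)_\infty/(\lambda z;q)_\infty$ with $z\to\infty$ and $\lambda=\tfrac{ef}{qa}$. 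A short growth estimate for $(z;q)_\infty$ (splitting the product at the index where $|zq^n|\approx1$) shows that in modulus this quotient behaves like $|z|^{\log_q|\lambda|}$, which tends to $0$ exactly when $|\lambda|>1$, i.e.\ when $|qa|<|ef|$ --- the hypothesis of the corollary. Hence the fourth term vanishes, leaving the three claimed terms.

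The main obstacle is this last step: the vanishing of the fourth term hinges on the precise decay rate of $(z;q)_\infty/(\lambda z;q)_\infty$ as $z\to\infty$, and it is exactly this estimate that both forces and explains the restriction $|qa|<|ef|$. A secondary, routine matter is the rigorous justification of the term-by-term passage to the limit inside the bilateral sums, which can be dispatched by the dominated-convergence argument of Koornwinder already invoked in the proof of Theorem~\ref{thm337H7}; the remaining simplifications of the prefactors are straightforward bookkeeping with $q$-shifted factorials, which I would leave to the reader.
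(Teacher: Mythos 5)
Your proposal is correct and follows essentially the same route as the paper: take $g\to\infty$ in Corollary~\ref{thm312}, whereby the $_4\psi_4$ becomes the $_3\psi_3$, the $_8W_7$ becomes a $_7W_6^1$ that is converted to a $_3\phi_2$ via \eqref{W761tran}, the third term's $_4\phi_3$ reduces to the $_3\phi_2$ with argument $q$, and the last term vanishes. Your analysis of the theta-quotient decay explaining why the last term vanishes precisely when $|qa|<|ef|$ is in fact more detailed than the paper's one-line assertion of that vanishing.
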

\begin{proof}
Start with Theorem~\ref{thm312} and take the limit as $g\to \infty$. In the limit the last term on the right-hand side of \eqref{bigtrans88} vanishes. The ${}_4\psi_4$ becomes the ${}_3\psi_3$ and the ${}_8W_7$ becomes a ${}_7W_6^1$ which can be written as a ${}_3\phi_2$ using \eqref{W761tran}. This completes the proof.
\end{proof}

\subsection{Transformations for the ${}_6\Psi_6^2$ series}

Note that in the following transformation, while the left-hand side is manifestly symmetric in the four variables $b,c,d,e$, the right-hand side satisfies the same symmetry (which otherwise is not obvious).

\begin{cor}
\label{cor52}
Let $0<|q|<1$, $b,c,d,e\in\CCast$. Then
\begin{eqnarray}
&&\hspace{-0.0cm}
\qpWpsi{6}{6}{2}{a}{b,c,d,e}{q,\frac{q^2a^3}{bcde}}
%=\qpsi68{\pm q\sqrt{a},b,c,d,e}{\pm\sqrt{a},\frac{qa}{b},
%\frac{qa}{c},\frac{qa}{d},\frac{qa}{e},0,0}{q,\frac{q^2a^3}{bcde}}
=\frac{(q,qa,\frac{q}{a},d,e,\frac{qa}{cd},\frac{qa}{ce};q)_\infty}{(\frac{q}{b},\frac{q}{c},\frac{qa}{b},\frac{qa}{c},\frac{de}{a};q)_\infty}\II{d;e}
\frac{(\frac{e}{a},\frac{qd}{b},\frac{qa}{bd};q)_\infty}{(d,\frac{q}{d},\frac{e}{d},\frac{qa}{d},\frac{qd}{e};q)_\infty}\qhyp21{\frac{de}{a},\frac{cd}{a}}{\frac{qd}{b}}{q,\frac{qa}{ce}}\label{6psi62ph1}\\
&&\hspace{0.6cm}=\frac{(qa,\frac{q}{a},\frac{qa}{bd},\frac{qa}{be},\frac{qa}{de};q)_\infty}{(\frac{q}{c},\frac{qa}{b},\frac{qa}{d},\frac{qa}{e},\frac{qa}{bde};q)_\infty}
\qppsi321{b,d,e}{\frac{qa}{c},\frac{bde}{a}}{q,q}
\nonumber\\*&&\hspace{1.6cm}
+
\frac{(q,qa,\frac{q}{a},b,d,e,\frac{q^2a^2}{bcde};q)_\infty}{(\frac{q}{c},\frac{qa}{b},\frac{qa}{c},\frac{qa}{d},\frac{qa}{e};q)_\infty\vt(\frac{bde}{qa}
%,\frac{q^2a}{bde}
;q)
%_\infty
}\qphyp311{\frac{qa}{bd},\frac{qa}{be},\frac{qa}{de}}{\frac{q^2a^2}{bcde}}{q,q}\label{6psi63ph1}\\
&&\hspace{0.6cm}
=\frac{(qa,\frac{q}{a},\frac{qa}{bc},\frac{qa}{de};q)_\infty}{(\frac{q}{d},\frac{q}{e},\frac{qa}{b},\frac{qa}{c};q)_\infty}\qpsi22{b,c}{\frac{qa}{d},\frac{qa}{e}}{q,\frac{qa}{bc}}
\label{psi682phi2}\\
&&\hspace{0.6cm}
%\qpWpsi{6}{6}{2}{a}{b,c,d,e}{q,\frac{q^2a^3}{bcde}}
=\frac{(qa,\frac{q}{a},\frac{qa}{cd},\frac{qa}{ce},\frac{qa}{de},\frac{b}{a},\frac{bcd}{a},\frac{bce}{a},\frac{bde}{a};q)_\infty}{(\frac{q}{f},\frac{qa}{c},\frac{qa}{d},\frac{qa}{e},\frac{bc}{a},\frac{bd}{a},\frac{be}{a},\frac{qa}{cde},\frac{bcde}{a},\frac{q^2a}{bcde};q)_\infty}\qpWpsi{6}{6}{2}{\frac{bcde}{qa}}{b,c,d,e}{q,\frac{b^2c^2d^2e^2}{qa^3}}\nonumber\\*
&&\hspace{1.6cm}+
\frac{(q,qa,\frac{q}{a},c,d,e,\frac{qb}{c};q)_\infty\vt(\frac{bcde}{qa^2};q)}{(\frac{q}{b},\frac{qa}{b},\frac{qa}{c},\frac{qa}{d},\frac{qa}{e},\frac{bd}{a},\frac{be}{a};q)_\infty\vt(\frac{cde}{qa};q)}\qhyp21{\frac{qa}{cd},\frac{qa}{ce}}{\frac{qb}{c}}{q,\frac{bc}{a}}\label{6psi62}\\
&&\hspace{0.6cm}
=\frac{(qa,\frac{q}{a},\frac{b}{a},\frac{de}{a},\frac{qa}{cd},\frac{qa}{ce},\frac{qa}{de},\frac{bcd}{a},\frac{bce}{a};q)_\infty}{(\frac{q}{d},\frac{q}{e},\frac{q}{f},\frac{qa}{c},\frac{qa}{d},\frac{qa}{e},\frac{bd}{a},\frac{be}{a};q)_\infty\vt(\frac{cde}{a};q)}\qpsi{2}{2}{b,c}{\frac{bcd}{a},\frac{bce}{a}}{q,\frac{de}{a}}\nonumber\\*
&&\hspace{1.6cm}+
\frac{(q,qa,\frac{q}{a},c,d,e,\frac{qb}{c};q)_\infty\vt(\frac{bcde}{qa^2};q)}{(\frac{q}{b},\frac{qa}{b},\frac{qa}{c},\frac{qa}{d},\frac{qa}{e},\frac{bd}{a},\frac{be}{a};q)_\infty\vt(\frac{cde}{qa};q)}\qhyp21{\frac{qa}{cd},\frac{qa}{ce}}{\frac{qb}{c}}{q,\frac{bc}{a}},\label{6psi62s}
\end{eqnarray}
provided the series converge.
%Note that this transformation is invariant under parameter interchange of the four variables $b,c,d,e$.
\end{cor}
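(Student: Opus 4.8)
The plan is to obtain all five representations as limits of the very-well-poised ${}_7\Psi_7^1$ transformations collected in Corollary~\ref{cor42}, using the single ${}_2\psi_2$ form \eqref{psi682phi2} as the anchor. The underlying mechanism is that sending any one of the (symmetric) numerator parameters $b,c,d,e,f$ of ${}_7\Psi_7^1(a;b,c,d,e,f;q,\frac{q^2a^3}{bcdef})$ to infinity lowers it to ${}_6\Psi_6^2(a;b,c,d,e;q,\frac{q^2a^3}{bcde})$: writing out the summand and invoking \eqref{critlim}, the factor $(f;q)_k$ together with $(\frac{q^2a^3}{bcdef})^k$ combines in the limit to $(-1)^kq^{\binom{k}{2}}(\frac{q^2a^3}{bcde})^k$, which supplies precisely the extra Gaussian factor and the new argument distinguishing the ${}_6\Psi_6^2$ summand from that of the ${}_7\Psi_7^1$. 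First I would record that \eqref{psi682phi2} is nothing but Bailey's ${}_6\Psi_6^2$-to-${}_2\psi_2$ transformation \eqref{Bailey} after the interchange $\{b,c\}\leftrightarrow\{d,e\}$, which is permissible since the left-hand side is symmetric in $b,c,d,e$; this needs no separate argument and serves as a lemma for the remaining cases.

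For \eqref{6psi63ph1} I would start from the ${}_3\psi_3$-plus-${}_3\phi_2$ identity \eqref{7psi83psi3} and let $d\to\infty$. Because $\frac{qa}{d}\to0$, the factor $(\frac{qa}{d};q)_k\to1$ turns the ${}_3\psi_3$ into a ${}_3\psi_2^1$, and likewise $(\frac{q^2a^2}{bdef};q)_k\to1$ turns the ${}_3\phi_2$ into a ${}_3\phi_1^1$; after the relabelling $(e,f)\mapsto(d,e)$ both series and, as one checks directly, both prefactors reproduce \eqref{6psi63ph1} verbatim. The form \eqref{6psi62} arises the same way from \eqref{7psi73phi2} on letting $f\to\infty$: the inner ${}_7\Psi_7^1$ collapses to the ${}_6\Psi_6^2(\frac{bcde}{qa};b,c,d,e;q,\frac{b^2c^2d^2e^2}{qa^3})$ of the first term, its very-well-poised argument being $\frac{q^2(bcde/qa)^3}{bcde}=\frac{b^2c^2d^2e^2}{qa^3}$, while by \eqref{critlim} the ${}_3\phi_2$ reduces to a ${}_2\phi_2$ whose second lower parameter is balanced against its remaining parameters and argument; \eqref{rel2122} then yields a ${}_2\phi_1$, and one Heine-type ($q$-Euler) transformation \cite[(III.1)--(III.3)]{GaspRah} carries it to the ${}_2\phi_1(\frac{qa}{cd},\frac{qa}{ce};\frac{qb}{c};q,\frac{bc}{a})$ of \eqref{6psi62}. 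Finally \eqref{6psi62s} follows from \eqref{6psi62} by applying the already-established \eqref{psi682phi2} to the inner ${}_6\Psi_6^2$: with $A=\frac{bcde}{qa}$ it becomes the stated ${}_2\psi_2(b,c;\frac{bcd}{a},\frac{bce}{a};q,\frac{de}{a})$, the ${}_2\phi_1$ term being carried over unchanged.

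The idem form \eqref{6psi62ph1} is treated in the same spirit but is the most delicate, and is where I expect the real work to lie. Starting from \eqref{psi783phi2}, whose idem runs over the pair $e;f$, I would send the base parameter $d\to\infty$ (deliberately \emph{not} a member of the idem pair, so that the two-term idem is preserved), reducing each ${}_3\phi_2$ to a ${}_2\phi_2$ and then, via \eqref{rel2122} followed by the same $q$-Euler transformation, to the ${}_2\phi_1(\frac{de}{a},\frac{cd}{a};\frac{qd}{b};q,\frac{qa}{ce})$ of \eqref{6psi62ph1} after the relabelling $(e,f)\mapsto(d,e)$. The genuine obstacle in all of these, rather than the limits themselves, is the reconciliation of prefactors: the extraneous factors (such as $(\frac{qa}{de};q)_\infty$ and $(\frac{bd}{a};q)_\infty$) thrown off by \eqref{rel2122} and the $q$-Euler step must be absorbed into the stated infinite products, and in the idem case the factor $\vt(d;q)=(d,\frac{q}{d};q)_\infty$ sitting inside the idem of \eqref{6psi62ph1} together with the remaining $q$-shifted-factorial quotients must be matched by repeated use of \eqref{aiden}. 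A secondary point, exactly as in the proof of Theorem~\ref{thm337H7}, is the rigorous justification of the term-wise limits in the bilateral series, which I would dispatch by dominated convergence as in \cite[Appendix~A]{Koornwinder1990} and relegate to a footnote.
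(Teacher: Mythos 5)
Your proposal is correct and follows essentially the same route as the paper: \eqref{psi682phi2} is Bailey's transformation \eqref{Bailey}, \eqref{6psi62ph1} comes from \eqref{psi783phi2} by a confluent limit plus \eqref{rel2122}, \eqref{6psi63ph1} from \eqref{7psi83psi3}, \eqref{6psi62} as a direct limit of \eqref{7psi73phi2}, and \eqref{6psi62s} by feeding \eqref{psi682phi2} back into \eqref{6psi62}. The only deviations are immaterial: which of the symmetric parameters you send to infinity, and the extra Heine-type step after \eqref{rel2122}, which can in fact be avoided by assigning the roles of the two upper parameters in \eqref{rel2122} appropriately.
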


\begin{proof}[Proof of Corollary~\ref{cor52}]
Start with \eqref{psi783phi2} and take the limit as $d\to\infty$, then replacing $f\mapsto d$ converts the ${}_3\phi_2$'s to ${}_2\phi_2$'s. Then one can use \eqref{rel2122} twice to write the ${}_2\phi_2$'s in terms of ${}_2\phi_1$'s, hereby establishing \eqref{6psi62ph1}.
By taking the limit as $c\to\infty$ of \eqref{7psi83psi3} followed by $f\mapsto c$, we obtain a bilateral extension of \eqref{2phi3phi2dzero} (after taking $c=a$), establishing \eqref{6psi63ph1}.
One can use \eqref{Bailey} to write the left-hand side as a ${}_2\psi_2$, establishing \eqref{psi682phi2}.
Equation \eqref{6psi62} can be obtained by a direct limit from \eqref{7psi73phi2}, while Equation \eqref{6psi62s}
is obtained by combining \eqref{6psi62} with \eqref{psi682phi2}.
This completes the proof.
\end{proof}

By starting with Corollary~\ref{cor45} and taking the limit as $f\to \infty$ one obtains a new transformation for a ${}_6\Psi_6^2$ in terms of a ${}_2\psi_3$, one ${}_2\phi_1$ and a ${}_3\phi_1^1$.
\begin{cor}
Let $0<|q|<1$, $a,b,c,d,e,f\in\CCast$, $|q|<\max\{|de/a|,|df/a|,|cd/a|\}<1$. 
%|qa|<|de|$, $|qa|<|df|$. 
Then one has the following transformation for a very-well-poised ${}_6\Psi_6^2$ 
\begin{eqnarray}
&&\hspace{-0.1cm}
\qpWpsi662{a}{b,c,d,e}{q,\frac{q^2a^3}{bcde}}
%\nonumber\\
%&&\hspace{0.0cm}
=\frac{(qa,\frac{q}{a},\frac{b}{a},\frac{ce}{a},\frac{de}{a},\frac{qa}{cd},\frac{qa}{ce},\frac{qa}{de},\frac{bce}{a},\frac{bde}{a};q)_\infty}{(\frac{q}{b},\frac{q}{c},\frac{q}{d},\frac{qa}{c},\frac{qa}{d},\frac{qa}{e},\frac{bc}{a},\frac{bd}{a},\frac{qa}{cde},\frac{bcde^2}{qa^2};q)_\infty}\qpsi23{e,\frac{bcde^2}{qa^2}}{\frac{bce}{a},\frac{bde}{a},\frac{cde}{a}}{q,\frac{bcd}{a}}\nonumber\\*
&&\hspace{2.0cm}-\frac{(q,c,d,e,qa,\frac{q}{a},\frac{b}{a},\frac{qb}{c},\frac{qa}{de};q)_\infty\vt(\frac{qa^2}{bcde};q)}{(\frac{q}{b},\frac{qa}{c},\frac{qa}{d},\frac{qa}{e},\frac{bc}{a},\frac{bd}{a},\frac{be}{a};q)_\infty\vt(\frac{a}{b},\frac{cde}{a};q)}\qhyp21{\frac{bd}{a},\frac{be}{a}}{\frac{qb}{c}}{q,\frac{qa}{de}}\nonumber\\*
&&\hspace{2.2cm}+\frac{(q,qa,\frac{q}{a},\frac{b}{a},\frac{qa}{cd},\frac{qa}{ce},\frac{qa}{de},\frac{bcde}{a^2};q)_\infty\vt(\frac{bce}{a},\frac{bde}{a};q)}{(\frac{q}{b},\frac{q}{c},\frac{q}{d},\frac{q}{e},\frac{qa}{c},\frac{qa}{d},\frac{qa}{e},\frac{bc}{a},\frac{bd}{a},\frac{be}{a};q)_\infty\vt(\frac{qa^2}{bcde^2};q)}
\qphyp311{\frac{be}{a},\frac{ce}{a},\frac{de}{a}}{\frac{bcde}{a^2}}{q,q}.
\label{cor44eq}
\end{eqnarray}
\end{cor}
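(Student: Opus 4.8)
The plan is to start from the ${}_7\Psi_7^1$ transformation \eqref{7Psi713psi3trans} of Corollary~\ref{cor45} and to let $f\to\infty$. Every individual limit is controlled by the asymptotic relation \eqref{critlim}: a parameter $f$ occurring linearly in a numerator $q$-shifted factorial combines with the accompanying power $f^{-k}$ supplied by the argument to yield a finite limit carrying an extra factor $q^{\binom k2}(-x)^k$, while any factor $(\cdot/f;q)_\infty$ tends to $1$ and any $(\cdot/f;q)_k$ tends to $(0;q)_k=1$. Thus a numerator parameter sent to infinity is replaced by such an extra factor (raising the index of the resulting $\psi$- or $\phi$-series by one), whereas a denominator parameter sent to $0$ is absorbed into a van de Bult--Rains superscript; this accounts for the parameter counts appearing in \eqref{cor44eq}.

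On the left, the ${}_7\Psi_7^1$ loses the numerator entry $f$ together with its well-poised partner $qa/f\to0$; the factor $(f;q)_k$ taken against the argument $\frac{q^2a^3}{bcdef}=f^{-1}\cdot\frac{q^2a^3}{bcde}$ produces $q^{\binom k2}(-1)^k\bigl(\frac{q^2a^3}{bcde}\bigr)^k$, turning it into the very-well-poised ${}_6\Psi_6^2$ with argument $\frac{q^2a^3}{bcde}$ on the left of \eqref{cor44eq}. In the first term the numerator entry $\frac{bcdef}{qa^2}\to\infty$ of the ${}_3\psi_3$ cancels its $f^{-k}$ against the argument $\frac{qa}{ef}$, collapsing the series to the ${}_2\psi_3$ with argument $\frac{bcd}{a}$, and all $f$-dependent factors of its prefactor tend to $1$. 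In the third term the denominator entry $\frac{qe}{f}\to0$ of the ${}_3\phi_2$ becomes a vanishing denominator element, so the series becomes the ${}_3\phi_1^1$ of \eqref{cor44eq} (argument still $q$), again with the $f$-factors of its prefactor disappearing.

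The one step that is not a plain limit, and which I expect to be the main obstacle, is the second term. Letting $f\to\infty$ there sends its ${}_3\phi_2$ to $\qhyp22{\frac{be}{a},\frac{qa}{cd}}{\frac{qb}{c},\frac{qb}{d}}{q,\frac{qb}{e}}$, which is not yet of the required shape. The key observation is that this ${}_2\phi_2$ is of the special balanced type $\qhyp22{\alpha,\beta}{\gamma,\frac{\alpha\beta z}{\gamma}}{q,z}$ to which \eqref{rel2122} applies: with $(\alpha,\beta,\gamma,z)=\bigl(\frac{be}{a},\frac{qa}{cd},\frac{qb}{c},\frac{qb}{e}\bigr)$ one checks that $\frac{\alpha\beta z}{\gamma}=\frac{qb}{d}$, so \eqref{rel2122} rewrites it as $\frac{(\frac{qa}{de};q)_\infty}{(\frac{qb}{d};q)_\infty}\,\qhyp21{\frac{be}{a},\frac{bd}{a}}{\frac{qb}{c}}{q,\frac{qa}{de}}$. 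The new prefactor $\frac{(\frac{qa}{de};q)_\infty}{(\frac{qb}{d};q)_\infty}$ then cancels the surviving $(\frac{qb}{d};q)_\infty$ in the limiting second coefficient, producing exactly the ${}_2\phi_1$ and its prefactor in \eqref{cor44eq}.

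Finally, as with the other limits in this section, interchanging the $f\to\infty$ limit with the bilateral and unilateral summations must be justified; since $q$ is held fixed this follows from dominated convergence once one exhibits an $f$-independent convergent majorant for the summands in the stated parameter range, and I would relegate these routine analytic details to a footnote. Assembling the three limiting terms then yields \eqref{cor44eq}.
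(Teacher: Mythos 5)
Your proposal is correct and follows essentially the same route as the paper: the paper's proof also starts from Corollary~\ref{cor45}, lets $f\to\infty$ so that the ${}_3\psi_3$ becomes the ${}_2\psi_3$ and the last ${}_3\phi_2$ becomes the ${}_3\phi_1^1$, and rewrites the intermediate ${}_2\phi_2$ as the ${}_2\phi_1$ via \eqref{rel2122}. Your write-up merely supplies more of the bookkeeping (the \eqref{critlim} mechanics and the explicit parameter identification in \eqref{rel2122}) that the paper leaves implicit.
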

\begin{proof}
Start with Corollary~\ref{cor45} and take the limit as $f\to \infty$. The ${}_3\psi_3$ becomes the ${}_2\psi_3$ and the first ${}_3\phi_2$ becomes a ${}_2\phi_2$ which can be written as a ${}_2\phi_1$ using \eqref{rel2122}. The last ${}_3\phi_2$ becomes a ${}_3\phi_1^1$ which completes the proof.
\end{proof}

\subsection{Transformations for the ${}_5\Psi_5^3$ series}

Note that in the following transformation, while the left-hand side is manifestly symmetric in the three variables $b,c,d$, the right-hand side satisfies the same symmetry (which otherwise is not obvious).

\begin{cor}
\label{cor54}
Let $0<|q|<1$, $a,b,c,d\in\CCast$. Then
\begin{eqnarray}
&&\hspace{-0.5cm}
\qpWpsi553{a}{b,c,d}{q,\frac{q^2a^3}{bcd}}\
%=\qpsi58{\pm q\sqrt{a},b,c,d}
%{\pm\sqrt{a},\frac{qa}{b},\frac{qa}{c},\frac{qa}{d},0,0,0}{q,\frac{q^2a^3}{bcd}}
%\nonumber\\
%&&\hspace{0.3cm}
=\frac{(qa,\frac{q}{a},\frac{qa}{bc};q)_\infty}{(\frac{q}{b},\frac{q}{c},\frac{qa}{d};q)_\infty}\qpsi12{d}{\frac{qa}{b},\frac{qa}{c}}{q,\frac{qa}{d}}\nonumber\\
&&\hspace{0.3cm}=\frac{(qa,\frac{q}{a},\frac{qa}{bd};q)_\infty}{(\frac{q}{c},\frac{qa}{b},\frac{qa}{d};q)_\infty}\qppsi211{b,d}{\frac{qa}{c}}{q,\frac{qa}{bd}}\nonumber\\
&&\hspace{0.3cm}=\frac{(qa,\frac{q}{a},\frac{qa}{bc},\frac{qa}{bd},\frac{qa}{cd};q)_\infty}{(\frac{qa}{b},\frac{qa}{c},\frac{qa}{d},\frac{qa}{bcd};q)_\infty}\qppsi312{b,c,d}{\frac{bcd}{a}}{q,q}+\frac{(q,qa,\frac{q}{a},b,c,d;q)_\infty}{(\frac{qa}{b},\frac{qa}{c},\frac{qa}{d},\frac{bcd}{qa},\frac{q^2a}{bcd};q)_\infty}\qphyp302{\frac{qa}{bc},\frac{qa}{bd},\frac{qa}{cd}}{-}{q,q}.
\label{eqcor54}
\end{eqnarray}
%Note that this transformation is invariant under parameter interchange of the three variables $b,c,d$.
\end{cor}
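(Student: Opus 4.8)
The plan is to derive Corollary~\ref{cor54} entirely as a confluence of Corollary~\ref{cor52}, obtained by sending a single parameter of the very-well-poised ${}_6\Psi_6^2$ to infinity. The first thing to check is that the left-hand side degenerates correctly: in $\qpWpsi662{a}{b,c,d,e}{q,\frac{q^2a^3}{bcde}}$ the only $e$-dependence of the $k$-th summand is the ratio $\frac{(e;q)_k}{(\frac{qa}{e};q)_k}$ together with the factor $\bigl(\frac{q^2a^3}{bcde}\bigr)^k$. By \eqref{critlim} the product of these tends to $(-1)^kq^{\binom k2}\bigl(\frac{q^2a^3}{bcd}\bigr)^k$ as $e\to\infty$, while $\frac{qa}{e}\to 0$ supplies one further vanishing denominator element. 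Hence the ${}_6\Psi_6^2$ collapses precisely to $\qpWpsi553{a}{b,c,d}{q,\frac{q^2a^3}{bcd}}$, the left-hand side of \eqref{eqcor54}; the extra factor $(-1)^kq^{\binom k2}$ is exactly what raises the $s-r$ exponent by one, accounting for the third vanishing denominator of the ${}_5\Psi_5^3$.

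Each of the three right-hand representations is then produced by taking the same class of limit in a suitable representation of Corollary~\ref{cor52}. For the ${}_2\psi_1^1$ form (the second equality in \eqref{eqcor54}) I would start from the ${}_2\psi_2$ representation \eqref{psi682phi2} and let $e\to\infty$: here $e$ sits only in the denominator entry $\frac{qa}{e}$ of the ${}_2\psi_2$ and in the prefactor entries $(\frac{qa}{de};q)_\infty,(\frac{q}{e};q)_\infty\to 1$, so the ${}_2\psi_2$ becomes a $\qppsi211{b,c}{\frac{qa}{d}}{q,\frac{qa}{bc}}$, and relabelling via the $b,c,d$-symmetry of the ${}_5\Psi_5^3$ (swapping $c\leftrightarrow d$) reproduces the stated second equality. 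For the ${}_1\psi_2$ form (the first equality) I would instead let a numerator parameter of the same ${}_2\psi_2$ go to infinity, say $b\to\infty$: then $(b;q)_k\bigl(\frac{qa}{bc}\bigr)^k\to(-1)^kq^{\binom k2}\bigl(\frac{qa}{c}\bigr)^k$ by \eqref{critlim}, so the ${}_2\psi_2$ degenerates to a $\qpsi12{c}{\frac{qa}{d},\frac{qa}{e}}{q,\frac{qa}{c}}$, and relabelling the surviving free parameters to $b,c,d$ gives the first equality. (Equivalently, the first and second forms are related directly by reversing the direction of bilateral summation $k\mapsto-k$.)

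For the remaining ${}_3\psi_2^1+{}_3\phi_0^2$ representation I would take the limit $c\to\infty$ in the ${}_3\psi_2^1+{}_3\phi_1^1$ representation \eqref{6psi63ph1}, which is the one singling out $c$. Under $c\to\infty$ the denominator entry $\frac{qa}{c}$ of the $\qppsi321{b,d,e}{\frac{qa}{c},\frac{bde}{a}}{q,q}$ tends to $0$, turning it into a $\qppsi312{b,d,e}{\frac{bde}{a}}{q,q}$, while the denominator entry $\frac{q^2a^2}{bcde}$ of the accompanying ${}_3\phi_1^1$ likewise tends to $0$, turning it into a $\qphyp302{\frac{qa}{bd},\frac{qa}{be},\frac{qa}{de}}{-}{q,q}$. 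The prefactors simplify since $(\frac{q}{c};q)_\infty,(\frac{qa}{c};q)_\infty,(\frac{q^2a^2}{bcde};q)_\infty\to 1$; the only non-obvious step is that the factor $\vt(\frac{bde}{qa};q)^{-1}$ in the second prefactor must be written out as $(\frac{bde}{qa},\frac{q^2a}{bde};q)_\infty^{-1}$ via \eqref{tfdef}, after which the relabelling $(b,d,e)\mapsto(b,c,d)$ matches the denominators $(\frac{bcd}{qa},\frac{q^2a}{bcd};q)_\infty$ appearing in \eqref{eqcor54} exactly.

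The calculations themselves are routine once the degeneration rules are fixed; the genuine obstacle is the same one flagged for Theorem~\ref{thm337H7}, namely justifying that the term-wise limits may be taken inside the bilateral sums. Because each ${}_5\Psi_5^3$ here has $s-r+m=3>0$ and is therefore entire in its argument, a dominated-convergence/majorization argument in the spirit of Koornwinder applies, and I would cite it rather than reproduce it. The secondary bookkeeping hazard is merely keeping the relabellings consistent with the $b,c,d$-symmetry so that all three right-hand sides carry the correct prefactors; I would fix one relabelling per representation at the outset and carry it through.
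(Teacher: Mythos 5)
Your proposal is correct and follows essentially the same route as the paper: Corollary~\ref{cor54} is obtained by confluent limits of Corollary~\ref{cor52}, with the $e\to\infty$ and $b\to\infty$ degenerations of the ${}_2\psi_2$ representation \eqref{psi682phi2} yielding the ${}_2\psi_1^1$ and ${}_1\psi_2$ forms (the paper additionally notes these two are related by reversing the bilateral summation, which you also observe), and the $c\to\infty$ limit of \eqref{6psi63ph1} yielding the ${}_3\psi_1^2$ plus ${}_3\phi_0^2$ form. Your explicit tracking of the $(-1)^kq^{\binom k2}$ bookkeeping and the remark on justifying term-wise limits are sound and, if anything, more careful than the paper's own (rather telegraphic) proof.
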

\begin{proof}
Start with Corollary~\ref{cor52}. Taking the limit as $e\to\infty$ of the left-hand side produces the left-hand side of \eqref{eqcor54}. Taking the limit of the ${}_2\psi_1^2$ representation as $b\to\infty$ produces the ${}_1\psi_2$ representation. One can obtain the ${}_2\psi_1^1$ representation from the ${}_1\psi_2$ representation by reversing the order of summation. Taking the limit of the sum of the ${}_3\psi_1^2$ and ${}_3\phi_2$ representation as $\{e,c\}\to\infty$ produces the ${}_2\psi_1^1$ and ${}_3\psi_1^2$ plus ${}_3\phi_0^2$ representations respectively. This completes the proof. 
\end{proof}

\subsection{Transformations for the ${}_4\Psi_4^4$ series}
Note that in the following transformation, while the left-hand side is manifestly symmetric in the two variables $b,c$, the right-hand side satisfies the same symmetry (which otherwise is not obvious).

\begin{cor}
\label{cor55}
Let
$0<|q|<1$, $a,b,c\in\CCast$. Then
\begin{eqnarray}
&&\hspace{-1.0cm}
\qpWpsi444{a}{b,c}{q,\frac{q^2a^3}{bc}}%=\qpsi48{\pm q\sqrt{a},b,c}{\pm\sqrt{a},\frac{qa}{b},\frac{qa}{c},0,0,0,0}{q,\frac{q^2a^3}{bc}}
%\nonumber\\
%&&\hspace{-1.4cm}
=
\frac{(qa,\frac{q}{a};q)_\infty}{(\frac{q}{c},\frac{qa}{b};q)_\infty}
\qppsi111{b}{\frac{qa}{c}}{q,\frac{qa}{b}}
%\nonumber\\
%&&\hspace{-1.4cm}
=\frac{(qa,\frac{q}{a},\frac{qa}{bc};q)_\infty}{(\frac{q}{b},\frac{q}{c};q)_\infty}\qpsi02{-}{\frac{qa}{b},\frac{qa}{c}}{q,qa}\nonumber\\
&&\hspace{2.6cm}
=\frac{(qa,\frac{q}{a},\frac{qa}{bc};q)_\infty}{(\frac{qa}{b},\frac{qa}{c};q)_\infty}\qppsi202{b,c}{-}{q,\frac{qa}{bc}}.
\label{cor54eq}
\end{eqnarray}
%Note that this transformation is invariant under parameter interchange of the two variables $b,c$.
\end{cor}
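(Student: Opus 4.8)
The plan is to derive all three representations in \eqref{cor54eq} at once, as the $d\to\infty$ limits of the three representations of the $_5\Psi_5^3$ series in Corollary~\ref{cor54}. First I would verify the left-hand side: in the summand of $\qpWpsi553{a}{b,c,d}{q,\frac{q^2a^3}{bcd}}$ the only dependence on $d$ is the numerator factor $(d;q)_k$ together with the factor $d^{-k}$ carried by the argument $\frac{q^2a^3}{bcd}$, and \eqref{critlim} gives $(d;q)_k\,d^{-k}\to(-1)^kq^{\binom{k}{2}}$. This raises the defect power $\big((-1)^kq^{\binom{k}{2}}\big)^{s-r}$ from $3$ to $4$ while leaving the very-well-poised factor $\frac{1-aq^{2k}}{1-a}$ and the remaining factorials untouched, so the left-hand side passes in the limit to $\qpWpsi444{a}{b,c}{q,\frac{q^2a^3}{bc}}$.

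For the first two right-hand representations the limit is routine and term-by-term. In the $\qpsi12{d}{\frac{qa}{b},\frac{qa}{c}}{q,\frac{qa}{d}}$ representation the lone $d$-dependent numerator factor $(d;q)_k$ combines with $(qa/d)^k$ from the argument to yield $(-1)^kq^{\binom{k}{2}}(qa)^k$, turning the series into $\qpsi02{-}{\frac{qa}{b},\frac{qa}{c}}{q,qa}$, while the only $d$-dependent prefactor $(\frac{qa}{d};q)_\infty\to1$; this gives the second representation of \eqref{cor54eq}. Identically, letting $d\to\infty$ in $\qppsi211{b,d}{\frac{qa}{c}}{q,\frac{qa}{bd}}$ produces $\qppsi111{b}{\frac{qa}{c}}{q,\frac{qa}{b}}$ (here $(d;q)_k(qa/bd)^k\to(-1)^kq^{\binom{k}{2}}(qa/b)^k$), the prefactor factors $(\frac{qa}{bd},\frac{qa}{d};q)_\infty$ both tending to $1$; this is the first representation of \eqref{cor54eq}.

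The third representation is the delicate one and is where I expect the main obstacle. In the $_3\psi_1^2+{}_3\phi_0^2$ line of \eqref{eqcor54}, the $_3\psi_1^2$ term passes to $\qppsi202{b,c}{-}{q,\frac{qa}{bc}}$ via $\frac{(d;q)_k}{(bcd/a;q)_k}\to(a/bc)^k$ (again from \eqref{critlim}), its prefactor tending to $\frac{(qa,\frac{q}{a},\frac{qa}{bc};q)_\infty}{(\frac{qa}{b},\frac{qa}{c};q)_\infty}$ since every $d$-dependent product there tends to $1$. The $_3\phi_0^2$ term must instead be shown to drop out: once its two vanishing numerator parameters $\frac{qa}{bd},\frac{qa}{cd}$ disappear, its series sums (by the $q$-binomial theorem) to the finite value $\frac{(q^2a/bc;q)_\infty}{(q;q)_\infty}$, so the vanishing has to come from its prefactor, which up to $d$-independent factors is $h(d):=\frac{(d;q)_\infty}{(bcd/(qa);q)_\infty}$. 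The functional equation $\frac{h(qd)}{h(d)}=\frac{1-\frac{bc}{qa}d}{1-d}$, whose right side tends to $\frac{bc}{qa}$, shows that under $d\mapsto d/q$ the value of $h$ is asymptotically scaled by $\frac{qa}{bc}$, so $h(d)\to0$ as $d\to\infty$ exactly when $|qa|<|bc|$, which is precisely the convergence region of the $\qppsi202{b,c}{-}{q,\frac{qa}{bc}}$ series. Hence in that region the $_3\phi_0^2$ term disappears, yielding the third representation, and the general identity follows by analytic continuation. The hard part will be making the three term-by-term limits and the decay of $h(d)$ rigorous; as elsewhere in the paper this is handled by majorizing the summands uniformly in $d$.
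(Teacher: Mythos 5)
Your proposal is correct and starts from the same place as the paper (taking $d\to\infty$ limits of Corollary~\ref{cor54}), but two of your three limits differ from the ones the authors take, and one of them is substantively harder. The paper obtains all three right-hand representations of \eqref{cor54eq} from only the \emph{first two} representations in \eqref{eqcor54}: the ${}_0\psi_2$ form by $d\to\infty$ in the ${}_1\psi_2$ representation (exactly as you do); the ${}_1\psi_1^1$ form by $c\to\infty$ in that same ${}_1\psi_2$ representation followed by the relabeling $d\mapsto c$; and, crucially, the ${}_2\psi_0^2$ form by $c\to\infty$ in the ${}_2\psi_1^1$ representation followed by the same relabeling. In those latter two limits the parameter sent to infinity occurs only in positions whose limits are trivial (a denominator parameter $qa/c\to 0$ and infinite products tending to $1$), so no term ever has to be shown to vanish. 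Your route to the ${}_2\psi_0^2$ form instead goes through the ${}_3\psi_1^2+{}_3\phi_0^2$ line and therefore forces you to prove that the ${}_3\phi_0^2$ term disappears; your analysis of $h(d)=(d;q)_\infty/(\frac{bcd}{qa};q)_\infty$ via its $q$-difference equation is correct, and the condition $|qa|<|bc|$ you land on is indeed precisely the convergence condition of the target ${}_2\psi_0^2$ series, but this is exactly the kind of asymptotic estimate on a ratio of theta-like products that the symmetric choice of limit avoids entirely. Your derivation of the ${}_1\psi_1^1$ form ($d\to\infty$ in the ${}_2\psi_1^1$ representation) is as routine as the paper's and has the minor advantage of requiring no relabeling. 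One small inaccuracy on the left-hand side: the summand of the ${}_5\Psi_5^3$ also depends on $d$ through the denominator factor $(\frac{qa}{d};q)_k$, not only through $(d;q)_k\,d^{-k}$; since $(0;q)_k=1$ for all $k\in\Z$ this factor tends to $1$ and nothing changes, but it should be mentioned.
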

\begin{proof}
Start with Corollary~\ref{cor54}. Taking the limit as $d\to\infty$ produces the left-hand side of \eqref{cor54eq}. Taking the limit as $c\to\infty$ in the ${}_1\psi_2$ representation and replacing $d\mapsto c$ produces the ${}_1\psi_2$ representation. Taking the limit as $d\to\infty$ in the ${}_1\psi_2$ representation produces the ${}_0\psi_2$ representation. Taking the limit as $c\to\infty$ in the ${}_2\psi_2$ representation and replacing $d\mapsto c$ produces the ${}_2\psi_2$ representation.
\end{proof}

\subsection{Transformations for the ${}_3\Psi_3^5$ series}

\begin{cor}
\label{cor56}
Let $0<|q|<1$, $a,b\in\CCast$. Then
\begin{eqnarray}
&&\hspace{-2.5cm}
\qpWpsi335{a}{b}{q,\frac{q^2a^3}{b}}
%=\qpsi38{\pm q\sqrt{a},b}{\pm\sqrt{a},\frac{qa}{b},0,0,0,0,0}{q,\frac{q^2a^3}{b}}
%\nonumber\\
%&&\hspace{0.78cm}
=\frac{(qa,\frac{q}{a};q)_\infty}{(\frac{qa}{b};q)_\infty}
\qppsi102{b}{-}{q,\frac{qa}{b}}=\frac{(qa,\frac{q}{a};q)_\infty}{(\frac{q}{b};q)_\infty}\qppsi011{-}{\frac{qa}{b}}{q,qa}.
\label{cor55eq}
\end{eqnarray}
\end{cor}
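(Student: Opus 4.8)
The plan is to derive Corollary~\ref{cor56} as the $c\to\infty$ limit of Corollary~\ref{cor55}, precisely mirroring the way Corollary~\ref{cor55} was itself obtained from Corollary~\ref{cor54}. Concretely, I would start from the chain of three equal expressions in \eqref{cor54eq} and let $c\to\infty$ throughout, matching the left-hand side and the two surviving right-hand representations term by term against \eqref{cor55eq}.

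First I would handle the left-hand side. In the very-well-poised $\qpWpsi444{a}{b,c}{q,\frac{q^2a^3}{bc}}$ the parameter $c$ occurs simultaneously as a numerator entry (contributing $(c;q)_k$), as the denominator entry $qa/c$ (contributing $1/(qa/c;q)_k$), and inside the argument through $(q^2a^3/(bc))^k$. By the critical limit \eqref{critlim} one has $(c;q)_k\sim c^k(-1)^kq^{\binom k2}$ as $c\to\infty$, while $(qa/c;q)_k\to 1$. Hence the explicit factor $c^k$ cancels against the $c^{-k}$ coming from the argument, the argument collapses to $q^2a^3/b$, and each summand acquires exactly one additional factor $(-1)^kq^{\binom k2}$. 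Simultaneously the numerator $c$ disappears (lowering the order from $4$ to $3$) and the denominator entry $qa/c$ becomes an extra vanishing denominator (raising the superscript from $4$ to $5$); this is consistent with the extra $(-1)^kq^{\binom k2}$, since the exponent $s-r$ of that factor increases by one. The outcome is precisely $\qpWpsi335{a}{b}{q,\frac{q^2a^3}{b}}$, the left-hand side of \eqref{cor55eq}.

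Next I would push the same limit through the two right-hand representations. Applying $c\to\infty$ to the ${}_1\psi_1^1$-representation in \eqref{cor54eq}, the prefactor factor $(q/c;q)_\infty\to 1$, the denominator parameter $qa/c\to 0$ becomes a vanishing denominator, and the argument $qa/b$ is untouched; this yields the ${}_1\psi_0^2$ expression, i.e.\ the first equality of \eqref{cor55eq}. Applying $c\to\infty$ to the ${}_0\psi_2$-representation, both prefactor factors $(qa/(bc);q)_\infty$ and $(q/c;q)_\infty$ tend to $1$, the parameter $qa/c\to 0$ again becomes a vanishing denominator, and the argument $qa$ is untouched; this yields the ${}_0\psi_1^1$ expression, i.e.\ the second equality of \eqref{cor55eq}. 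As a built-in consistency check, the same ${}_1\psi_0^2$ form also drops out of the ${}_2\psi_0^2$-representation, where instead of gaining a vanishing denominator one simply loses the numerator $c$.

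The main point requiring care is the left-hand-side bookkeeping just described—the simultaneous cancellation of $c^k$ against the argument together with the resulting one-step increment of both the vanishing-denominator count and the power of $q^{\binom k2}$; once this is tracked correctly the rest is mechanical manipulation of infinite products. The only genuine technical obstacle is the same one flagged in the footnote to Theorem~\ref{thm337H7}: the term-wise passage to the limit must be justified by exhibiting a $c$-independent convergent majorant for the summands and invoking dominated convergence. This is routine in the present range, since the limiting $\qpWpsi335{a}{b}{q,\frac{q^2a^3}{b}}$ converges, and I would relegate it to a remark rather than carry out the estimate in full.
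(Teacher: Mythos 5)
Your proof is correct and takes essentially the same route as the paper's: both obtain Corollary~\ref{cor56} by letting $c\to\infty$ throughout Corollary~\ref{cor55}, with the same bookkeeping on the left-hand side (cancellation of $c^k$ against the argument, one extra vanishing denominator, one extra factor $(-1)^kq^{\binom k2}$). The only immaterial difference is that you derive the ${}_0\psi_1^1$ form by sending $c\to\infty$ directly in the ${}_0\psi_2$ representation, whereas the paper uses the $b\leftrightarrow c$ symmetry and sends $b\to\infty$ in the ${}_1\psi_1^1$ representation; your consistency check via the ${}_2\psi_0^2$ representation and your remark on justifying the term-wise limit are both sound.
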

\begin{proof}
Start with Corollary~\ref{cor55}. Taking the limit as $c\to\infty$ produces the left-hand side of \eqref{cor55eq}. Taking the limit as $c\to\infty$ of the ${}_1\psi_2$ representation produces the ${}_1\psi_2$ representation.  Taking the limit as $b\to\infty$ of the ${}_1\psi_2$ representation produces the ${}_0\psi_2$ representation.
\end{proof}

\subsection{Summation for the ${}_2\Psi_2^6$ series}

The following summation is a restatement of Watson's quintuple product identity \cite[\href{http://dlmf.nist.gov/17.8.E3}{(17.8.3)}]{NIST:DLMF} (see also \cite[Exercise 5.6]{GaspRah}).
\begin{cor}
\label{cor57}
Let $0<|q|<1$, $a\in\CCast$, $a\ne 1$. Then
\begin{eqnarray}
&&\hspace{-2.5cm}
(1-a)\qpWpsi226{a}{-}{q,q^2a^3}
%=\qpsi28{\pm q\sqrt{a}}{\pm\sqrt{a},0,0,0,0,0,0}{q,q^2a^3}
=
%(qa,\tfrac{q}{a};q)_\infty
\vartheta(a;q)
\qppsi002{-}{-}{q,qa}=
%(qa,\frac{q}{a};q)_\infty
(q^2;q^2)_\infty\vt(a;q^2)\vt(q^2a^2;q^4).
\end{eqnarray}
\end{cor}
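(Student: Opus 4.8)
The plan is to derive the first equality as the $b\to\infty$ limit of Corollary~\ref{cor56}, in line with the limiting scheme used throughout this section, and then to identify the second equality with Watson's quintuple product identity.

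First I would take $b\to\infty$ in the representation $\qpWpsi335{a}{b}{q,\frac{q^2a^3}{b}}=\frac{(qa,\frac{q}{a};q)_\infty}{(\frac{q}{b};q)_\infty}\qppsi011{-}{\frac{qa}{b}}{q,qa}$ of Corollary~\ref{cor56}. On the left-hand side the numerator factor $(b;q)_k$ paired with the argument $(q^2a^3/b)^k$ tends, by the critical limit \eqref{critlim}, to $q^{\binom k2}(-q^2a^3)^k$, while the denominator factor $(\frac{qa}{b};q)_k\to1$; together with the very-well-poised factor $\frac{1-aq^{2k}}{1-a}$ the summand becomes that of $\qpWpsi226{a}{-}{q,q^2a^3}$, so the left-hand side tends to the ${}_2\Psi_2^6$. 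On the right-hand side $(\frac{q}{b};q)_\infty\to1$ and $(\frac{qa}{b};q)_k\to1$, whence the prefactor tends to $(qa,\frac{q}{a};q)_\infty$ and the series tends to $\qppsi002{-}{-}{q,qa}$. This produces $\qpWpsi226{a}{-}{q,q^2a^3}=(qa,\frac{q}{a};q)_\infty\,\qppsi002{-}{-}{q,qa}$; multiplying by $1-a$ and using $(a;q)_\infty=(1-a)(qa;q)_\infty$ together with $\vt(a;q)=(a,\frac{q}{a};q)_\infty$ yields the first equality.

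For the second equality I would first evaluate the surviving bilateral series. Since $\qppsi002{-}{-}{q,qa}=\sum_{k=-\infty}^\infty q^{2\binom k2}(qa)^k=\sum_{k=-\infty}^\infty q^{k^2}a^k$, Jacobi's triple product identity \eqref{tfdef} with base $q^2$ rewrites it as $(q^2;q^2)_\infty\,\vt(-qa;q^2)$. It then remains to establish the theta identity $\vt(a;q)\,\vt(-qa;q^2)=\vt(a;q^2)\,\vt(q^2a^2;q^4)$, which I would prove by splitting each infinite product $(x;q)_\infty=(x;q^2)_\infty(qx;q^2)_\infty$ into even and odd parts and then pairing $(qx;q^2)_\infty(-qx;q^2)_\infty=(q^2x^2;q^4)_\infty$; both sides collapse to $(a,\frac{q^2}{a};q^2)_\infty(q^2a^2,\frac{q^2}{a^2};q^4)_\infty$, and multiplying through by $(q^2;q^2)_\infty$ gives the right-hand side. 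Equivalently, one can bypass the middle member altogether: expanding $(1-a)\qpWpsi226{a}{-}{q,q^2a^3}=\sum_{k=-\infty}^\infty\big(q^{3k^2-k}a^{3k}-q^{3k^2+k}a^{3k+1}\big)$ exhibits the left-hand side as precisely the sum side of Watson's quintuple product identity \cite[\href{http://dlmf.nist.gov/17.8.E3}{(17.8.3)}]{NIST:DLMF}, \cite[Exercise~5.6]{GaspRah}, whose product side is the claimed $(q^2;q^2)_\infty\vt(a;q^2)\vt(q^2a^2;q^4)$.

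I expect the only genuinely delicate point to be the rigorous justification of the term-by-term passage to the limit $b\to\infty$, exactly the issue flagged in the footnote to Theorem~\ref{thm337H7}: one must dominate the summands by a $b$-independent convergent series before interchanging limit and summation. Here this is routine, since the limiting ${}_2\Psi_2^6$ carries the factor $q^{3k^2-k}$, giving super-geometric decay of the terms in both tail directions and hence a uniform majorant for all large $b$. Everything else is elementary manipulation of infinite $q$-products.
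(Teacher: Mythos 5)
Your proposal is correct and follows essentially the same route as the paper: multiply Corollary~\ref{cor56} by $(1-a)$, let $b\to\infty$ to turn the ${}_3\Psi_3^5$ into the ${}_2\Psi_2^6$ and the ${}_0\psi_1^1$ into the ${}_0\psi_0^2$, and evaluate the latter by the triple product identity \eqref{tfdef}. Your extra steps (the even/odd product splitting giving $\vt(a;q)\vt(-qa;q^2)=\vt(a;q^2)\vt(q^2a^2;q^4)$, and the alternative direct identification with the quintuple product) merely make explicit what the paper leaves implicit, since it already notes that the corollary is a restatement of Watson's quintuple product identity.
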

\begin{proof}
Start with Corollary~\ref{cor56}, where each series is multiplied by $(1-a)$. Taking the limit as $b\to\infty$ of the ${}_1\psi_2$ or ${}_0\psi_2$ representations produces the ${}_0\psi_2$ representation which is evaluated using the triple-product identity
\eqref{tfdef}.
\end{proof}

\section{Implied bilateral basic transformations and summations}
\label{sec:imp}

In this section we list some implications of the above transformations for
very-well-poised bilateral basic hypergeometric series in terms of
bilateral basic hypergeometric series that in general are not very-well-poised.

\subsection{Transformations for the balanced ${}_4\psi_4$ series}

\noindent Using Theorem~\ref{thm312}, one may obtain a transformation of a balanced ${}_4\psi_4$ in terms a two ${}_8W_7$'s and two balanced ${}_4\phi_3$'s.

\begin{thm}Let $0<|q|<1$, $a,b,c,d,e,f,g,\frac{q^2abcd}{efg}\in\CCast$, $|efg|<|qbcd|$. Then
\begin{eqnarray}
&&\hspace{0.0cm}\qpsi44{a,b,c,d}{e,f,g,\frac{q^2abcd}{efg}}{q,q}\nonumber\\
&&\hspace{0.275cm}=\frac{c}{e}\frac{(q,a,c,\frac{q}{e},\frac{q}{f},\frac{c}{a},\frac{f}{a},\frac{f}{b},\frac{f}{d},\frac{qa}{c},\frac{qe}{f},\frac{qg}{f},\frac{eg}{ab},\frac{eg}{ac},\frac{eg}{ad},\frac{fg}{qc},\frac{q^2c}{ef},\frac{q^2c}{fg},\frac{efg}{qabcd};q)_\infty}{(g,\frac{q}{b},\frac{q}{d},\frac{f}{q},\frac{g}{c},\frac{q^2}{f},\frac{qc}{f},\frac{qc}{g},\frac{qeg}{af},\frac{efg}{qabc},\frac{efg}{qabd},
\frac{efg}{qacd};q)_\infty\,\vt(a,\frac{c}{a},\frac{f}{e},\frac{ef}{c};q)}
%\nonumber\\
%&&\hspace{5cm}\times
\Whyp87{\frac{eg}{af}}{\frac{e}{a},\frac{g}{a},\frac{qb}{f},\frac{qc}{f},\frac{qd}{f}}{q,\frac{efg}{qbcd}}\nonumber\\*
&&\hspace{0.275cm}-\frac{c}{e}\frac{(q,a,c,\frac{q}{e},\frac{q}{f},\frac{c}{a},\frac{e}{a},\frac{e}{b},\frac{e}{d},\frac{qa}{c},\frac{qf}{e},\frac{qg}{e},\frac{eg}{qc},\frac{fg}{ab},\frac{fg}{ac},\frac{fg}{ad},\frac{q^2c}{ef},\frac{q^2c}{eg},\frac{efg}{qabcd};q)_\infty}{(g,\frac{q}{b},\frac{q}{d},\frac{e}{q},\frac{g}{c},\frac{q^2}{e},\frac{qc}{e},\frac{qc}{g},\frac{qfg}{ae},\frac{efg}{qabc},\frac{efg}{qabd},
\frac{efg}{qacd};q)_\infty\vt(a,\frac{c}{a},\frac{f}{e},\frac{ef}{c};q)}
%\nonumber\\
%&&\hspace{5cm}\times
\Whyp87{\frac{fg}{ae}}{\frac{f}{a},\frac{g}{a},\frac{qb}{e},\frac{qc}{e},\frac{qd}{e}}{q,\frac{efg}{qbcd}}
\nonumber\\*
&&\hspace{0.275cm}
-\frac{(q,c,\frac{q}{e},\frac{q}{f},\frac{q}{g},\frac{qc}{a},\frac{qc}{b},\frac{qc}{d},\frac{efg}{qabcd};q)_\infty}{\vt(qc;q)(\frac{q}{a},\frac{q}{b},\frac{q}{d},\frac{qc}{e},\frac{qc}{f},\frac{qc}{g},\frac{efg}{qabd};q)_\infty}\qhyp43{\frac{qc}{e},\frac{qc}{f},\frac{qc}{g},\frac{efg}{qabd}}{\frac{qc}{a},\frac{qc}{b},\frac{qc}{d}}{q,q}\nonumber\\*
&&\hspace{0.275cm}-\frac{(q,a,b,c,d,\frac{efg}{qabcd},\frac{e^2fg}{qabcd},\frac{ef^2g}{qabcd},
\frac{efg^2}{qabcd};q)_\infty}
{\vt(\frac{efg}{abcd};q)(e,f,g,\frac{efg}{qabc},\frac{efg}{qabd},\frac{efg}{qacd},\frac{efg}{qbcd};q)_\infty}\qhyp43{\frac{efg}{qabc},\frac{efg}{qabd},\frac{efg}{qacd},\frac{efg}{qbcd}}{\frac{e^2fg}{qabcd},\frac{ef^2g}{qabcd},
\frac{efg^2}{qabcd}}{q,q}.
\label{4psi4eq}
\end{eqnarray}
\label{thm43}
\end{thm}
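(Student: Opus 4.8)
The plan is to derive \eqref{4psi4eq} from the master transformation \eqref{bigtrans88} of Theorem~\ref{thm312} by isolating the balanced ${}_4\psi_4$ and then eliminating the very-well-poised ${}_8\Psi_8$ that survives on the right-hand side. First I would fix an invertible change of the seven free parameters so that the balanced ${}_4\psi_4$ occurring as the first term on the right of \eqref{bigtrans88} becomes the general balanced series $\qpsi44{a,b,c,d}{e,f,g,\frac{q^2abcd}{efg}}{q,q}$ of the statement; the required substitution is determined, up to the evident $\{a,b,c,d\}$- and $\{e,f,g\}$-symmetries, by matching the eight parameters of the two ${}_4\psi_4$'s (equivalently, one recognizes that the ${}_4\psi_4$ in \eqref{bigtrans88} is the Zhang--Zhang ${}_4\psi_4$ of \eqref{8psi84psi4} attached to the Wei--Yu companion series). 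After this substitution I solve \eqref{bigtrans88} for the ${}_4\psi_4$: the right-hand side then consists of one ${}_8\Psi_8$, one very-well-poised ${}_8W_7$ which — after the substitution — is exactly one of the two ${}_8W_7$'s appearing in \eqref{4psi4eq}, and two balanced ${}_4\phi_3$'s.

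The key step is the removal of the remaining ${}_8\Psi_8$. I would re-expand it with the Wei--Yu transformation \eqref{8psi88W7} of Theorem~\ref{thmwy}, whose right-hand side furnishes a second ${}_8W_7$ together with a second ${}_8\Psi_8$; the second ${}_8W_7$ is the $e\leftrightarrow f$ companion of the first. This interchange corresponds to the transposition of two upper parameters of the symmetric ${}_8\Psi_8$, and is precisely the origin of the pair of ${}_8W_7$ series in \eqref{4psi4eq} (note that the argument $\frac{efg}{qbcd}$ is itself invariant under $e\leftrightarrow f$). The leftover ${}_8\Psi_8$ is then reduced once more by the Zhang--Zhang transformation \eqref{8psi84psi4} of Theorem~\ref{thm22}; by the compatibility of the parametrizations this reproduces the target ${}_4\psi_4$ and the two balanced ${}_4\phi_3$ series already in play, so that upon collecting the two occurrences of the ${}_4\psi_4$ the contributions of the ${}_8\Psi_8$ series cancel identically, and one is left solving a single scalar relation for the ${}_4\psi_4$.

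The principal obstacle, which I expect to absorb essentially all of the labour, is entirely in the coefficients. One must check that the net coefficient of the eliminated ${}_8\Psi_8$ genuinely vanishes, verify that the two balanced ${}_4\phi_3$ series surviving from the different sources coincide term by term (so that their prefactors may legitimately be added), and finally reduce the resulting products and ratios of infinite $q$-shifted factorials and theta functions to the compact forms displayed in \eqref{4psi4eq}. The theta quotient $\vartheta(a,\frac{c}{a},\frac{f}{e},\frac{ef}{c};q)$ standing in the denominators of both ${}_8W_7$ prefactors is the tell-tale sign that two $e\leftrightarrow f$-related prefactors of one and the same series have been merged by a single application of the theta addition formula \eqref{tfa}, exactly as in the passage from \eqref{7psi73phi2} to \eqref{7psi12phi32} in the proof of Corollary~\ref{cor42}; the quasi-periodicity relation \eqref{aiden} is then invoked repeatedly to coax every remaining factor into the stated shape.

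Finally, the hypotheses of the theorem should fall out automatically from the domains of \eqref{bigtrans88}, \eqref{8psi88W7} and \eqref{8psi84psi4} under the chosen substitution: the requirement $a,b,c,d,e,f,g,\frac{q^2abcd}{efg}\in\CCast$ guarantees that the ${}_4\psi_4$ is a well-defined balanced series, while $|efg|<|qbcd|$ is precisely the condition $\big|\frac{efg}{qbcd}\big|<1$ ensuring convergence of the two ${}_8W_7$ series on the right.
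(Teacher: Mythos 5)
Your proposal is correct and follows essentially the same route as the paper: the paper's proof simply observes that the ${}_8\Psi_8$ on the left of \eqref{bigtrans88} is invariant under $b\leftrightarrow c$, equates the two resulting versions of its right-hand side (which is your two-fold Wei--Yu/Zhang--Zhang expansion in unrolled form, the interchange becoming $e\leftrightarrow f$ after the substitution $(a,b,c,d,e,f,g)\mapsto(\frac{aefg}{q^2c^2},\frac{fg}{qc},\frac{eg}{qc},\frac{ef}{qc},a,\frac{ab}{c},\frac{ad}{c})$), and solves for the ${}_4\psi_4$, the ${}_4\psi_4$ and the two ${}_4\phi_3$'s being automatically the same in both versions because only their parameters get permuted. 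Your remarks on merging the paired prefactors via the theta addition formula \eqref{tfa} and on the origin of the hypotheses correctly identify the bookkeeping that the paper leaves implicit.
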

\begin{proof}
Start with \eqref{bigtrans88}, then notice that the ${}_8\Psi_8$ is invariant under the interchange $b\leftrightarrow c$. Then equating the right-hand sides under this replacement, making the replacements
\[
(a,b,c,d,e,f,g)\mapsto\left(\frac{aefg}{q^2c^2},\frac{fg}{qc},\frac{eg}{qc},\frac{ef}{qc},a,\frac{ab}{c},\frac{ad}{c}\right),
\]
and solving for the ${}_4\psi_4$ 
completes the proof. 
\end{proof}

One can obtain the following summation theorem by starting with Theorem 
\ref{thm22}, and setting $(d,f)=(1,a)$.
\begin{cor} Let $0<|q|<1$, $a,b,c,d,e\in\CCast$. Then one has the following summation theorem for a ${}_4\psi_4$ and two ${}_4\phi_3$'s namely 
\begin{eqnarray}
&&\frac{(\frac{qa}{d},\frac{qa}{e};q)_\infty}{(\frac{q}{bc},\frac{qa^2}{de};q)_\infty}\qpsi44{a,b,c,\frac{qa^2}{de}}{qa,bc,\frac{qa}{d},\frac{qa}{e}}{q,q}+
\frac{(q,\frac{1}{a},\frac{d}{a},\frac{e}{a},\frac{q^2a}{de},\frac{q^2a^2}{bde},\frac{q^2a^2}{cde};q)_\infty}{(\frac{q}{a},\frac{q}{b},\frac{q}{c},\frac{qa}{de},\frac{de}{qa^2},\frac{q^2a^2}{de},\frac{q^2a^2}{bcde};q)_\infty}\qhyp43{\frac{qa}{d},\frac{qa}{e},\frac{qa}{de},\frac{q^2a^2}{bcde}}{\frac{q^2a}{de},\frac{q^2a^2}{bde},\frac{q^2a^2}{cde}}{q,q}\nonumber\\*
&&\hspace{2.5cm}+\frac{(q,a,b,c,\frac{q^2a}{bc},\frac{q^2a}{bcd},\frac{q^2a}{bce};q)_\infty}{(qa,\frac{q}{b},\frac{q}{c},\frac{qa}{bc},\frac{q^2}{bc},\frac{q^2a^3}{bcde};q)_\infty}\qhyp43{\frac{q}{b},\frac{q}{c},\frac{qa}{bc},\frac{q^2a^2}{bcde}}{\frac{q^2a}{bc},\frac{q^2a}{bcd},\frac{q^2}{bce}}{q,q}=
\frac{(q,q,\frac{q}{d},\frac{q}{e},\frac{qa}{b},\frac{qa}{c};q)_\infty}
{(qa,\frac{q}{a},\frac{q}{b},\frac{q}{c},\frac{qa}{bc},\frac{qa}{de};q)_\infty}.
\end{eqnarray}
\end{cor}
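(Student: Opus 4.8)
The plan is to obtain this summation as a specialization of the corrected Zhang--Zhang transformation in Theorem~\ref{thm22}. Concretely, in \eqref{8psi84psi4} I would make the replacement
\[
(a,b,c,d,e,f,g)\mapsto(a,d,e,1,b,a,c),
\]
so that the parameter in the old slot $d$ takes the value $1$ and the one in the old slot $f$ takes the value $a$. The essential point is that this choice degenerates the very-well-poised ${}_8\Psi_8$ on the left to a single term. Indeed, setting the old $f$ equal to $a$ turns the corresponding pair of summand factors into $(a;q)_k/(q;q)_k$, and since $(1;q)_n=0$ for $n\ge 1$ the negative-index value $(q;q)_{-n}=q^{\binom{n+1}2}(-q)^{-n}(1;q)_n^{-1}$ is infinite, so $1/(q;q)_{-n}=0$ and every term with negative summation index drops out; this is the same reduction of a bilateral ${}_8\Psi_8$ to a unilateral very-well-poised series used in the proof of Corollary~\ref{cor38}. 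Setting the old $d$ equal to $1$ then inserts the numerator factor $(1;q)_k$, which vanishes for all $k\ge 1$. Hence only the $k=0$ term survives, and its value is $1$, so the left-hand side of \eqref{8psi84psi4} equals $1$ under this specialization.

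Under the same replacement the three terms on the right of \eqref{8psi84psi4} reduce to the objects in the statement: the balanced ${}_4\psi_4$ becomes $\qpsi44{a,b,c,\frac{qa^2}{de}}{qa,bc,\frac{qa}{d},\frac{qa}{e}}{q,q}$ (the lower entry $efg/a$ collapsing to $bc$ and the numerator entry $qa^2/bcd$ to $qa^2/de$), while the two balanced ${}_4\phi_3$ series acquire the parameters displayed. A convenient internal check at this stage is that both resulting ${}_4\phi_3$'s must still be balanced; verifying this pins down the correct lower parameters and guards against slips.

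It then remains to recognize that the three infinite-product prefactors produced by the specialization share a common factor, equal to the reciprocal of the product
\[
P:=\frac{(q,q,\tfrac{q}{d},\tfrac{q}{e},\tfrac{qa}{b},\tfrac{qa}{c};q)_\infty}{(qa,\tfrac{q}{a},\tfrac{q}{b},\tfrac{q}{c},\tfrac{qa}{bc},\tfrac{qa}{de};q)_\infty}.
\]
Multiplying the relation ``$1=(\text{prefactor})\,{}_4\psi_4+(\text{prefactor})\,{}_4\phi_3+(\text{prefactor})\,{}_4\phi_3$'' through by $P$ replaces each prefactor by the cleaner one shown in the statement and simultaneously turns the left-hand constant $1$ into $P$, which is precisely the claimed right-hand side. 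I would carry out the three prefactor simplifications by listing numerator and denominator $q$-shifted factorials and cancelling in pairs; for the ${}_4\psi_4$ prefactor, for example, all but $(\tfrac{qa}{d},\tfrac{qa}{e};q)_\infty/(\tfrac{q}{bc},\tfrac{qa^2}{de};q)_\infty$ cancel against $P$.

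The main obstacle is not conceptual but the bookkeeping of $q$-shifted factorials: one must first check that no prefactor develops a spurious zero or pole at the specialized values (the entries $q/d$ and $qa/f$ merely become $q$, and no argument degenerates to $1$), and then carry out the sizable cancellation that extracts the common factor $P^{-1}$ and confirms each reduced prefactor. This is where errors are easiest to introduce, and the balance of the two ${}_4\phi_3$ series, together with the symmetry of the ${}_8\Psi_8$ in $b,c,d,e,f,g$, provide the most useful consistency checks.
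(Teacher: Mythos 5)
Your proposal is correct and is essentially the paper's own proof: both specialize the corrected Zhang--Zhang transformation of Theorem~\ref{thm22} by setting the old parameters $d=1$ and $f=a$ (which collapses the ${}_8\Psi_8$ to its $k=0$ term, equal to $1$), relabel the surviving parameters, and clear the common product $P$ from the three prefactors. Your balancedness check is a genuinely useful addition, since it reveals that the corollary as printed contains a slip in the second ${}_4\phi_3$ (the denominator entry $\frac{q^2}{bce}$ should read $\frac{q^2a}{bce}$, in agreement with your direct substitution).
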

\begin{proof}
Start with Theorem~\ref{thm22} and setting $d=1$ and $f=a$ then the ${}_8\Psi_8$ on the left-hand side becomes unity. Setting $g\mapsto d$ and simplifying completes the proof.
\end{proof}

\begin{rem} 
Note that there exists general transformations of a ${}_r\psi_r$ given in \cite[Section 5.4]{GaspRah} which are due to Slater \cite{Slater52}. 
For instance, taking
\cite[(5.4.5)]{GaspRah}, for $r=4$, this expansion is given by
\begin{eqnarray}
&&\hspace{-3.5cm}\qpsi44{a,b,c,d}{e,f,g,h}{q,z}=
\frac{(q;q)_\infty}{\vt(\frac{abcdz}{efgh};q)(\frac{q}{a},\frac{q}{b},\frac{q}{c},\frac{q}{d};q)_\infty}\nonumber\\*
&&\hspace{-2cm}
\times\II{e;f,g,h}\!\!\!\!\frac{q}{e}\frac{\vt(\frac{abcdz}{qfgh};q)(\frac{q}{f},\frac{q}{g},\frac{q}{h},\frac{e}{a},\frac{e}{b},\frac{e}{c},\frac{e}{d};q)_\infty}{(e,\frac{e}{f},\frac{e}{g},\frac{e}{h};q)_\infty}\qhyp43{\frac{qa}{e},\frac{qb}{e},\frac{qc}{e},\frac{qd}{e}}{\frac{qf}{e},\frac{qg}{e},\frac{qh}{e}}{q,z},\label{eq:Slater4}
\end{eqnarray}
such that $0<|q|<1$, $|z|<1$, $a,b,c,d,e,f,g,h\in\CCast$ with no vanishing denominator factors. However, these expansions do not seem useful for the purpose of obtaining summations or simplified transformations by taking limits. This is because the factors in Slater's transformations involve modified theta functions that contain all parameters, such as the factor $\vt(\frac{abcdz}{qfgh};q)$
in \eqref{eq:Slater4}, which makes it impossible to take confluent limits.
\end{rem}

\subsection{Transformations for the special ${}_3\psi_3$ series}

By starting with \eqref{cor42eq} and interchanging $d,f$ and then $e,f$ and then comparing the right-hand sides, we can obtain two transformations for a special ${}_3\psi_3$ series.
\begin{thm}Let $0<|q|<1$, $a,b,c,d,e,f\in\CCast$ such that $|def|<|qabc|$. Then
\begin{eqnarray}
&&\hspace{-0.0cm}\qpsi33{a,b,c}{d,e,f}{q,\frac{def}{qabc}}
%\nonumber\\
%&&\hspace{-0.7cm}
=
\frac{(c,\frac{f}{a},\frac{d}{b},\frac{e}{b},\frac{q^2c}{de},\frac{def}{qbc};q)_\infty}{(f,\frac{q}{b},\frac{qc}{d},\frac{qc}{e},\frac{def}{qabc},\frac{de}{qb};q)_\infty}\qpsi33{a,\frac{de}{qb},\frac{de}{qc}}{d,e,\frac{def}{qbc}}{q,\frac{f}{a}}\nonumber\\*
&&\hspace{0.5cm}+\frac{(q,c,\frac{q}{d},\frac{q}{e},\frac{qc}{b};q)_\infty}{(\frac{q}{a},\frac{q}{b},\frac{qc}{d},\frac{qc}{e},\frac{qc}{f};q)_\infty}\left(\frac{(\frac{f}{a},\frac{de}{ab};q)_\infty\vt(\frac{def}{qbc};q)}{(f,\frac{def}{qabc};q)_\infty\vt(\frac{qb}{de};q)}\qhyp32{\frac{d}{b},\frac{e}{b},\frac{qc}{f}}{\frac{qc}{b},\frac{de}{ab}}{q,q}\!-\!
\frac{(\frac{q}{f},\frac{qc}{a};q)_\infty}{\vt(\frac{1}{c};q)}\qhyp32{\frac{qc}{d},\frac{qc}{e},\frac{qc}{f}}{\frac{qc}{a},\frac{qc}{b}}{q,q}\right)\nonumber
\\&&\label{firstpsi33}\\
&&\hspace{0.4cm}=
\frac{(c,\frac{d}{a},\frac{d}{b},\frac{q^2c}{de},\frac{q^2c}{df},\frac{def}{qac},\frac{def}{qbc};q)_\infty}{(e,f,\frac{q}{a},\frac{q}{b},\frac{qc}{e},\frac{qc}{f},\frac{d^2ef}{q^2abc};q)_\infty}\qpsi33{\frac{de}{qc},\frac{df}{qc},\frac{d^2ef}{q^2abc}}{d,\frac{def}{qac},\frac{def}{qbc}}{q,\frac{qc}{d}}\nonumber\\*
&&\hspace{2.05cm}+\frac{(q,c,\frac{q}{d};q)_\infty}{(\frac{q}{a},\frac{q}{b},\frac{qc}{e},\frac{qc}{f};q)_\infty}\Biggl(\frac{(\frac{de}{ab},\frac{df}{ab},\frac{def}{qac},\frac{def}{qbc},\frac{q^2ac}{def},\frac{q^2bc}{def};q)_\infty}{(e,f,\frac{def}{qabc};q)_\infty\vt(\frac{q^2abc}{d^2ef};q)}\qhyp32{\frac{d}{a},\frac{d}{b},\frac{def}{qabc}}{\frac{de}{ab},\frac{df}{ab}}{q,q}\nonumber\\*
&&\hspace{7.5cm}-
\frac{(\frac{q}{e},\frac{q}{f},\frac{qc}{a},\frac{qc}{b};q)_\infty}{(\frac{qc}{d};q)_\infty\vt(\frac{1}{c};q)}\qhyp32{\frac{qc}{d},\frac{qc}{e},\frac{qc}{f}}{\frac{qc}{a},\frac{qc}{b}}{q,q}\Biggr).\label{secpsi33}
\end{eqnarray}
\end{thm}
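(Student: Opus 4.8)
The plan is to realize the balanced ${}_3\psi_3$ on the left of the asserted identity as the one that already appears on the right of Corollary~\ref{cor4x}, and then to eliminate the accompanying very-well-poised ${}_7\Psi_7^1$ by exploiting its symmetry. First I note that the ${}_3\psi_3$ occurring in \eqref{cor42eq}, namely $\qpsi33{e,f,\frac{qa^2}{bcd}}{\frac{qa}{b},\frac{qa}{c},\frac{qa}{d}}{q,\frac{qa}{ef}}$, is \emph{balanced}: its argument $\frac{qa}{ef}$ equals the product of its lower parameters divided by $q$ times the product of its upper parameters. Hence any balanced ${}_3\psi_3$ of the stated type can be matched to it. Concretely, substituting for the parameters $(a,b,c,d,e,f)$ of Corollary~\ref{cor4x} the values
\[
\left(\tfrac{def}{q^2c},\ \tfrac{ef}{qc},\ \tfrac{df}{qc},\ \tfrac{de}{qc},\ a,\ b\right)
\]
(written in terms of the theorem's $a,b,c,d,e,f$) turns that ${}_3\psi_3$ into the target $\qpsi33{a,b,c}{d,e,f}{q,\frac{def}{qabc}}$ and turns the balanced ${}_3\phi_2$ of \eqref{cor42eq} into $\qhyp32{\frac{qc}{d},\frac{qc}{e},\frac{qc}{f}}{\frac{qc}{a},\frac{qc}{b}}{q,q}$, which is exactly the ${}_3\phi_2$ common to \eqref{firstpsi33} and \eqref{secpsi33}. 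After this substitution, \eqref{cor42eq} expresses one specific ${}_7\Psi_7^1$ as a multiple $\mathcal P_0$ of the target ${}_3\psi_3$ plus a multiple $\mathcal Q_0$ of that common ${}_3\phi_2$.

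Next I use that the ${}_7\Psi_7^1$ on the left of \eqref{cor42eq} is symmetric in its five lower parameters, so applying \eqref{cor42eq} to any permutation of the five substituted values leaves the left-hand side unchanged while moving a different pair of those values into the two ``free'' numerator slots of the ${}_3\psi_3$. The interchanges of $d,f$ and of $e,f$ indicated before the theorem are precisely the two permutations needed: one checks that the $d\leftrightarrow f$ version produces the series $\qpsi33{a,\frac{de}{qb},\frac{de}{qc}}{d,e,\frac{def}{qbc}}{q,\frac{f}{a}}$ together with $\qhyp32{\frac{d}{b},\frac{e}{b},\frac{qc}{f}}{\frac{qc}{b},\frac{de}{ab}}{q,q}$ of \eqref{firstpsi33}, with prefactors $\mathcal P_1,\mathcal Q_1$, and that the second interchange produces the corresponding ${}_3\psi_3$ and ${}_3\phi_2$ of \eqref{secpsi33}. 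Equating the direct form with each permuted form (both equal to the same ${}_7\Psi_7^1$) and solving for the target gives $\Psi_{\mathrm{target}}=\frac{\mathcal P_1}{\mathcal P_0}\,{}_3\psi_3^{\mathrm{new}}+\frac{\mathcal Q_1}{\mathcal P_0}\,{}_3\phi_2^{\mathrm{new}}-\frac{\mathcal Q_0}{\mathcal P_0}\,{}_3\phi_2^{\mathrm{common}}$, which is the structure of \eqref{firstpsi33} and, for the other permutation, of \eqref{secpsi33}.

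What remains is bookkeeping of the prefactors. Each $\mathcal P_i,\mathcal Q_i$ is a ratio of infinite $q$-shifted factorials, and the ${}_3\phi_2$-prefactors additionally carry a single modified theta function: the factor $\vt(\frac{bcd}{qa^2};q)$ of \eqref{cor42eq} specializes under the substitution and its permuted variants to $\vt(\frac1c;q)$ and to $\vt(\frac{qb}{de};q)$, the ``genuine'' theta quotients visible in the denominators of \eqref{firstpsi33}. Forming the quotients $\mathcal P_1/\mathcal P_0,\ \mathcal Q_1/\mathcal P_0,\ \mathcal Q_0/\mathcal P_0$, simplifying the products, and repackaging suitable pairs of infinite products as theta functions via the definition \eqref{tfdef} (using the reflection \eqref{aiden} where convenient) produces the numerator thetas such as $\vt(\frac{def}{qbc};q)$ and lets the two ${}_3\phi_2$ contributions be collected under a single common prefactor, giving the bracketed right-hand sides. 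I expect the main obstacle to be exactly this prefactor and theta accounting---in particular tracking the signs and confirming the two theta prefactors combine into the displayed quotients---together with checking that the stated region $|def|<|qabc|$ (that is, $|\frac{def}{qabc}|<1$) is compatible with the hypotheses needed to invoke \eqref{cor42eq} in each of its three guises.
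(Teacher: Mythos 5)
Your overall strategy is exactly the paper's: substitute $(a,b,c,d,e,f)\mapsto(\frac{def}{q^2c},\frac{ef}{qc},\frac{df}{qc},\frac{de}{qc},a,b)$ into Corollary~\ref{cor4x}, use the full $S_5$ symmetry of the ${}_7\Psi_7^1$ in its parameters $b,c,d,e,f$ to equate differently permuted right-hand sides of \eqref{cor42eq}, and solve for the target ${}_3\psi_3$; your identification of the direct form's ${}_3\psi_3$ as the target and of its ${}_3\phi_2$ as the common $\qhyp32{\frac{qc}{d},\frac{qc}{e},\frac{qc}{f}}{\frac{qc}{a},\frac{qc}{b}}{q,q}$ with the $\vt(\frac1c;q)$ prefactor is correct, and the $d\leftrightarrow f$ comparison does yield \eqref{firstpsi33} as you describe.

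There is, however, a genuine gap in your derivation of \eqref{secpsi33}. The right-hand side of \eqref{cor42eq} is \emph{manifestly} symmetric under permutations of $\{e,f\}$ (and, separately, of $\{b,c,d\}$): both the ${}_3\psi_3$ term and the ${}_3\phi_2$ term, prefactors included, are unchanged when $e$ and $f$ are swapped. Consequently the transposition $e\leftrightarrow f$ that you invoke produces an expression identical to the direct form, and ``equating the direct form with the $e\leftrightarrow f$ form'' is the tautology $X=X$; it cannot produce \eqref{secpsi33}, and your claim that ``one checks that the second interchange produces the corresponding ${}_3\psi_3$ and ${}_3\phi_2$ of \eqref{secpsi33}'' does not survive that check. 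Since the right-hand side of \eqref{cor42eq} depends only on which three of the five parameters occupy the $\{b,c,d\}$ slots, the comparison needed for \eqref{secpsi33} is between the direct form (first group $\{b,c,d\}$) and the form with first group $\{b,e,f\}$ — for instance the double transposition $c\leftrightarrow e$, $d\leftrightarrow f$. One verifies that under the substitution this permuted form's ${}_3\psi_3$, namely $\qpsi33{\sigma(e),\sigma(f),\frac{qa^2}{bef}}{\frac{qa}{b},\frac{qa}{e},\frac{qa}{f}}{q,\frac{qa}{cd}}$ with $\{\sigma(e),\sigma(f)\}=\{c,d\}$, becomes $\qpsi33{\frac{de}{qc},\frac{df}{qc},\frac{d^2ef}{q^2abc}}{d,\frac{def}{qac},\frac{def}{qbc}}{q,\frac{qc}{d}}$, and its ${}_3\phi_2$ (with $\vt(\frac{bef}{qa^2};q)\mapsto\vt(\frac{q^2abc}{d^2ef};q)$) becomes the one displayed in \eqref{secpsi33}. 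With the permutation corrected, the remainder of your argument — the prefactor and theta bookkeeping and the convergence check — goes through as in the paper.
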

\begin{proof}
Notice that \eqref{cor42eq} is invariant under the interchange of the $b,c,d,e,f$ variables. Equate the right-hand side of \eqref{cor42eq} with the same right-hand side with $d,f$ interchanged. 
Making the variable replacements
\begin{equation}
(a,b,c,d,e,f)\mapsto(\frac{def}{q^2c},\frac{ef}{qc},\frac{df}{qc},\frac{de}{qc},a,b)
\label{varrep}
\end{equation}
produces \eqref{firstpsi33}. Equating the right-hand side of \eqref{cor42eq} with $d,f$ interchanged with the same right-hand side with $e,f$ interchanged and making the identical variable replacements \eqref{varrep} produces \eqref{secpsi33}, which completes the proof.
\end{proof}
\begin{rem}
If one sets $d=q$ in \eqref{firstpsi33}, \eqref{secpsi33}, or $e=q$ in \eqref{firstpsi33}, then the second terms on the right-hand sides vanish. This then produces the two-term nonterminating transformations for a special ${}_3\phi_1$ \cite[(III.9), (III.10)]{GaspRah}. On the other hand, if one sets $e=q$ in \eqref{secpsi33} or $f=q$ in \eqref{firstpsi33}, \eqref{secpsi33}, then the special ${}_3\psi_3$ on the left-hand side becomes a special ${}_3\phi_2$ and the ${}_3\psi_3$ on the right-hand sides become a specialized ${}_3\psi_3$. For instance, for the $e=q$ in \eqref{secpsi33} case, solving for this specialized ${}_3\psi_3$ replacing variables produces the following transformation
\begin{eqnarray}
&&\hspace{-3.5cm}\qpsi33{a,b,c}{d,e,\frac{q^2bc}{de}}{q,\frac{q}{a}}=\frac{(q,c,\frac{d}{a},\frac{d}{b},\frac{e}{b},\frac{qb}{a},\frac{q^2c}{de};q)_\infty}{(e,\frac{q}{a},\frac{q}{b},\frac{d}{a},\frac{qc}{e},\frac{de}{qb},\frac{q^2bc}{de};q)_\infty}\qhyp32{\frac{d}{a},\frac{qb}{e},\frac{de}{qc}}{d,\frac{qb}{a}}{q,\frac{qc}{d}}\nonumber\\*
&&\hspace{0.3cm}-\frac{(q,c,\frac{q}{d},\frac{q}{e},\frac{qc}{a},\frac{qc}{b},\frac{de}{qbc};q)_\infty}{(\frac{q}{a},\frac{q}{b},\frac{qc}{d},\frac{qc}{e},\frac{de}{qb};q)_\infty\vt(\frac{1}{c};q)}\qhyp32{\frac{qc}{d},\frac{qc}{e},\frac{de}{qb}}{\frac{qc}{a},\frac{qc}{b}}{q,q}.
\end{eqnarray}
There will be other specialized transformations of a ${}_3\psi_3$ if one sets $f=q$ in \eqref{firstpsi33}, \eqref{secpsi33}. We leave these transformations to the reader.
\end{rem}
\begin{cor}Let $0<|q|<1$, $a,b,c,d,e,f\in\CCast$ such that $|def|<|qabc|$. Then
\begin{eqnarray}
&&\hspace{-0.7cm}\qpsi33{a,b,c}{d,e,f}{q,
\frac{def}{qabc}}=\frac{\frac{c}{d}(q,c,\frac{q}{d},\frac{q}{e},\frac{e}{a},\frac{e}{b},\frac{qf}{e},\frac{de}{qc},\frac{ef}{qc},\frac{q^2c}{de},\frac{q^2c}{ef};q)_\infty}{(f,\frac{q}{a},\frac{q}{b},\frac{e}{q},\frac{e}{d},\frac{f}{c},\frac{q^2}{e},\frac{qc}{e},\frac{qc}{f},\frac{de}{c},\frac{qc}{de};q)_\infty}\qhyp32{\frac{qa}{e},\frac{qb}{e},\frac{qc}{e}}{\frac{qd}{e},\frac{qf}{e}}{q,\frac{def}{qabc}}\nonumber\\*
&&\hspace{2.7cm}-\frac{\frac{c}{d}(q,c,\frac{q}{d},\frac{q}{e},\frac{d}{a},\frac{d}{b},\frac{qe}{d},\frac{qf}{d},\frac{de}{qc},\frac{df}{qc},\frac{q^2c}{de},\frac{q^2c}{df};q)_\infty}{(f,\frac{q}{a},\frac{q}{b},\frac{d}{q},\frac{e}{d},\frac{f}{c},\frac{q^2}{d},\frac{qc}{d},\frac{qc}{f},\frac{qd}{e},\frac{de}{c},\frac{qc}{de};q)_\infty}\qhyp32{\frac{qa}{d},\frac{qb}{d},\frac{qc}{d}}{\frac{qe}{d},\frac{qf}{d}}{q,\frac{def}{qabc}}\nonumber\\*
&&\hspace{2.7cm}-
\frac{(q,c,\frac{q}{d},\frac{q}{e},\frac{q}{f},\frac{qc}{a},\frac{qc}{b};q)_\infty}
{(qc,\frac{1}{c},\frac{q}{a},\frac{q}{b},\frac{qc}{d},\frac{qc}{e},\frac{qc}{f};q)_\infty}
\qhyp32{\frac{qc}{d},\frac{qc}{e},\frac{qc}{f}}{\frac{qc}{a},\frac{qc}{b}}{q,q}.
\label{3psi3trans}
\end{eqnarray}
\label{cor51}
\end{cor}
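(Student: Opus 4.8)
The plan is to combine the two independent evaluations of the very-well-poised ${}_7\Psi_7^1$ series that have already been recorded, and then to solve for the ${}_3\psi_3$ that appears in one of them. On one hand, Corollary~\ref{cor4x} expresses the ${}_7\Psi_7^1$ with parameters $a;b,c,d,e,f$ as the sum of a single ${}_3\psi_3$ series (with argument $\frac{qa}{ef}$) and a ${}_3\phi_2$ series (with argument $q$); this is~\eqref{cor42eq}. On the other hand, the last link in the chain of identities of Corollary~\ref{cor42}, namely~\eqref{7psi12phi32}, represents the very same ${}_7\Psi_7^1$ as a difference of two ${}_3\phi_2$ series, both carrying the argument $\frac{qa}{ef}$.

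First I would equate the right-hand sides of~\eqref{cor42eq} and~\eqref{7psi12phi32}. Since both equal the identical ${}_7\Psi_7^1$, the resulting identity contains exactly one bilateral object, the ${}_3\psi_3$ of~\eqref{cor42eq}, every other term being a ${}_3\phi_2$. Solving algebraically for that ${}_3\psi_3$ then displays it as a linear combination of precisely three ${}_3\phi_2$ series: the two inherited from~\eqref{7psi12phi32} (each with argument $\frac{qa}{ef}$) and the one inherited from~\eqref{cor42eq} (with argument $q$).

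The final step is to put the ${}_3\psi_3$ into the standard shape appearing on the left of~\eqref{3psi3trans}. This is achieved by the same parameter substitution~\eqref{varrep}, namely $(a,b,c,d,e,f)\mapsto(\frac{def}{q^2c},\frac{ef}{qc},\frac{df}{qc},\frac{de}{qc},a,b)$, under which the numerator parameters $\{e,f,\frac{qa^2}{bcd}\}$ of the ${}_3\psi_3$ go to $\{a,b,c\}$, the denominator parameters $\{\frac{qa}{b},\frac{qa}{c},\frac{qa}{d}\}$ go to $\{d,e,f\}$, and the argument $\frac{qa}{ef}$ goes to $\frac{def}{qabc}$. Under the same substitution each ${}_3\phi_2$ argument $\frac{qa}{ef}$ also becomes $\frac{def}{qabc}$, while the argument-$q$ series is untouched, so that the three summands on the right of~\eqref{3psi3trans} are reproduced.

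The main obstacle is purely computational. One must simplify the product of infinite $q$-shifted factorials (and, where they occur, modified theta functions) that arises after the coefficient of the ${}_3\psi_3$ in~\eqref{cor42eq} is inverted and multiplied through the remaining terms, and then verify that, after the substitution~\eqref{varrep}, each of the three resulting prefactors collapses to the compact form shown in~\eqref{3psi3trans}. No new idea is needed beyond elementary manipulations of $q$-shifted factorials together with the modified-theta relation~\eqref{aiden}; the difficulty lies entirely in the length and delicacy of this reduction, for example in checking that the denominator factor $\vt(\frac{bcd}{qa^2};q)$ of~\eqref{cor42eq} turns into $\vt(\frac1c;q)=(\frac1c,qc;q)_\infty$ and matches the denominator of the third prefactor in~\eqref{3psi3trans}.
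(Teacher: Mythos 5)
Your proposal is correct, and I verified that it reproduces the statement exactly: under the substitution \eqref{varrep} the three series coming from \eqref{cor42eq} and \eqref{7psi12phi32} become precisely the three $_3\phi_2$ series of \eqref{3psi3trans} (with the right arguments and signs), and the ratios of prefactors — e.g.\ the ratio of the first prefactor of \eqref{7psi12phi32} to the $_3\psi_3$-coefficient of \eqref{cor42eq}, and likewise the coefficient $\vt(\frac{bcd}{qa^2};q)\mapsto(\frac1c,qc;q)_\infty$ in the third term — collapse to the stated products. However, your route is genuinely different from the paper's. The paper starts from Theorem~\ref{thm43} (the balanced $_4\psi_4$ transformation, itself obtained from Corollary~\ref{thm312} by exploiting the $b\leftrightarrow c$ symmetry), takes the limit $d\to\infty$ so that the last $_4\phi_3$ vanishes, converts the two surviving $_8W_7$ series into $_7W_6^1$ series and thence into special $_3\phi_2$ series via \eqref{W761tran}, and finally invokes the theta addition formula \eqref{tfa} to consolidate the prefactors. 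You instead equate two already-recorded evaluations of the same $_7\Psi_7^1$ — \eqref{cor42eq} (one $_3\psi_3$ plus one argument-$q$ $_3\phi_2$) and \eqref{7psi12phi32} (a difference of two $_3\phi_2$'s) — and solve for the $_3\psi_3$. This buys you a shorter derivation that needs no further limit and no fresh application of \eqref{tfa} (the theta addition formula is already absorbed into the derivation of \eqref{7psi12phi32}), at the price of relying on \eqref{7psi12phi32}, whose own proof the paper only sketches; the paper's route stays entirely at the level of fully stated results. Both are legitimate, non-circular derivations from material established earlier in the paper.
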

\begin{proof}
Start with Theorem~\ref{thm43} and take the limit as $d\to\infty$. The limit of the last term on the right-hand side of \eqref{4psi4eq} vanishes. The limit of the third to last term on the right-hand side of \eqref{4psi4eq} becomes the last term on the right-hand side of \eqref{3psi3trans}. The limits of the two ${}_8W_7$'s become ${}_7W_6^1$'s which are evaluated as special ${}_3\phi_2$'s using \eqref{W761tran}. Using the theta function addition formula \eqref{tfa} 
completes the proof.
\end{proof}

\begin{rem}
One of the referees inquired whether Corollary~\ref{cor51} might be connected with \cite[(5.4.4)]{GaspRah} as a ${}_3\psi_3$ with the required argument. Performing the necessary substitutions leads to the following symmetric sum of three terms, namely:
\begin{eqnarray}
&&\hspace{-1.5cm}\qpsi33{a,b,c}{d,e,f}{q,\frac{def}{qabc}}=\frac{(q;q)_\infty\vartheta(qa,qb,qc;q)}{(d,e,f,\frac{q}{a},\frac{q}{b},\frac{q}{c};q)_\infty\vartheta(\frac{def}{qabc};q)}\nonumber\\
&&\hspace{1.0cm}\times
\II{a;b,c} \frac{a^2(\frac{d}{a},\frac{e}{a},\frac{f}{a},\frac{qa}{b},\frac{qa}{c};q)_\infty\vartheta(\frac{def}{qbc};q)}{(\frac{a}{b},\frac{a}{c},\frac{qb}{a},\frac{qc}{a};q)_\infty\vartheta(qa;q)}
\qhyp32{\frac{qa}{d},\frac{qa}{e},\frac{qa}{f}}{\frac{qa}{b},\frac{qa}{c}}{q,q}.
\label{gs544}
\end{eqnarray}
Corollary~\ref{cor51} and \eqref{gs544} might
very-well be related and there is a lot of freedom to work here because the nonterminating ${}_3\phi_2$'s with non-$q$ argument satisfy the transformations given by \cite[(III.9), (III.10), (III.34)]{GaspRah}. However, we were unable to find a direct route which relates both of these transformation formulas.    
\end{rem}

\begin{rem}
If one takes the limit as either $d\to q$ or $e\to q$ in Corollary~\ref{cor51}, then two of the terms on the right-hand side vanish leading to trivial identities. Alternatively, if one takes the limit as $f\to q$ in Corollary~\ref{cor51}, then one of the terms on the right-hand side vanishes leading to the following transformation formula for a special nonterminating ${}_3\phi_2$, namely
\begin{eqnarray}
&&\hspace{-0.2cm}\qhyp32{a,b,c}{d,e}{q,\frac{de}{abc}}=
\frac{(\frac{q}{d},\frac{q}{e};q)_\infty\vt(\frac{c}{d},\frac{de}{qc};q)}{(\frac{q}{a},\frac{q}{b},\frac{q}{c};q)_\infty\vt{(\frac{qc}{d},\frac{de}{c};q)}}
\nonumber\\*
&&\hspace{0.5cm}\times\left(
\frac{(\frac{d}{a},\frac{d}{b},\frac{d}{c},\frac{qe}{d};q)_\infty}{(\frac{d}{q};q)_\infty\vt(\frac{e}{d};q)}\qhyp32{\frac{qa}{d},\frac{qb}{d},\frac{qc}{d}}{\frac{q^2}{d},\frac{qe}{d}}{q,\frac{de}{abc}}-\frac{(\frac{e}{a},\frac{e}{b};q)_\infty\vt(\frac{e}{c};q)}{(\frac{e}{q},\frac{e}{d},\frac{qc}{e};q)_\infty}\qhyp32{\frac{qa}{e},\frac{qb}{e},\frac{qc}{e}}{\frac{q^2}{e},\frac{qd}{e}}{q,\frac{de}{abc}}\right).
\label{subqf}
\end{eqnarray}
The above transformation formula can be verified by starting with the three-term transformation formula \cite[(III.33)]{GaspRah} which is invariant under the replacement of the variables $d,e$. Performing this variable interchange and solving for the second term on the right-hand side obtains two identities which can then be multiplied by the corresponding infinite $q$-shifted factorials in \eqref{subqf} and then subtracted. The resulting expression then simplifies to the correct result.
\end{rem}

\subsection{Transformations for a ${}_2\psi_3$ series}

By starting with \eqref{cor44eq} and taking advantage of the symmetry in the parameters $b,c,d,e$, one can obtain a transformation of a special ${}_2\psi_3$.
\begin{thm}Let $0<|q|<1$, $a,b,c,d,e\in\CCast$. Then 
\begin{eqnarray}
&&\hspace{-1cm} \qpsi23{a,b}{c,d,e}{q,\frac{cde}{qab}}=\frac{(b,\frac{d}{a},\frac{e}{a},\frac{q^2b}{de},\frac{cde}{qab};q)_\infty}{(c,\frac{q}{a},\frac{qb}{d},\frac{qb}{e},\frac{de}{qa};q)_\infty}\qpsi23{\frac{de}{qa},\frac{de}{qb}}{d,e,\frac{cde}{qab}}{q,c}\nonumber\\*
&&\hspace{0.0cm}
+\frac{(q,b,\frac{q}{e},\frac{qb}{a};q)_\infty\vt(d;q)}{(c,d,\frac{q}{a},\frac{qb}{c},\frac{qb}{d},\frac{qb}{e};q)_\infty}
\left(\frac{\vt(\frac{cde}{qab};q)}{\vt(\frac{qa}{de};q)}\qphyp311{\frac{d}{a},\frac{e}{a},\frac{qb}{c}}{\frac{qb}{a}}{q,q}-\frac{\vt(c;q)}{\vt(\frac{1}{b};q)}\qphyp311{\frac{qb}{c},\frac{qb}{d},\frac{qb}{e}}{\frac{qb}{a}}{q,q}\right)\label{firstpsi23}\\*
&&\hspace{0.0cm}=\frac{(b,\frac{c}{a},\frac{q^2b}{de};q)_\infty}{(c,\frac{qb}{d},\frac{qb}{e};q)_\infty}\qpsi22{a,\frac{de}{qb}}{d,e}{q,\frac{c}{a}}-\frac{(q,b,\frac{q}{c},\frac{q}{d},\frac{q}{e},\frac{qb}{a};q)_\infty}{(\frac{q}{a},\frac{qb}{c},\frac{qb}{d},\frac{qb}{e};q)_\infty\vt(\frac{1}{b};q)}\qphyp311{\frac{qb}{c},\frac{qb}{d},\frac{qb}{e}}{\frac{qb}{a}}{q,q}.
\label{secpsi23}
\end{eqnarray}
\end{thm}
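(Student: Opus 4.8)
The plan is to start from the very-well-poised ${}_6\Psi_6^2$ transformation \eqref{cor44eq} and to exploit the symmetry, emphasized in the text preceding that corollary, of its left-hand side under every permutation of the parameters $b,c,d,e$. Write the three summands on the right of \eqref{cor44eq} as $T_1$ (the ${}_2\psi_3$), $T_2$ (the ${}_2\phi_1$), and $T_3$ (the ${}_3\phi_1^1$). The decisive structural observation is that $T_1$ and $T_3$ are each symmetric in $b,c,d$ while singling out $e$, whereas $T_2$ is symmetric in $d$ and $e$: its upper parameters $\tfrac{bd}a,\tfrac{be}a$, its argument $\tfrac{qa}{de}$, and all factors of its prefactor involving $d$ or $e$ come in $d\leftrightarrow e$-symmetric pairs, while its lower parameter $\tfrac{qb}c$ involves neither. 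Since the left-hand side is permutation-invariant, $T_1+T_2+T_3$ is invariant under $d\leftrightarrow e$ even though $T_1,T_3$ are not.

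First I would prove \eqref{firstpsi23}. Applying $d\leftrightarrow e$ to \eqref{cor44eq} and using invariance of the left-hand side yields $T_1+T_2+T_3=T_1'+T_2+T_3'$, where primes denote $d\leftrightarrow e$ images; as $T_2$ is unchanged it drops out, leaving $T_1-T_1'=T_3'-T_3$. Solving for $T_1$ expresses one ${}_2\psi_3$ through the other ${}_2\psi_3$ and the two ${}_3\phi_1^1$ series $T_3,T_3'$. I would then carry out the change of variables (the analogue of \eqref{varrep}) normalizing $T_1$ to $\qpsi23{a,b}{c,d,e}{q,\frac{cde}{qab}}$, under which $T_1'$ passes to the argument-$c$ series and $T_3,T_3'$ to the two ${}_3\phi_1^1$'s of \eqref{firstpsi23}. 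Finally the two distinct normalizations multiplying the same ${}_3\phi_1^1$ must be merged into the compact theta quotients shown, by the addition formula \eqref{tfa} together with \eqref{aiden}, exactly as in the proof of \eqref{firstpsi33}.

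To obtain \eqref{secpsi23} I would collapse two of the three terms of \eqref{firstpsi23} rather than repeat the symmetry step. Equating the two representations \eqref{6psi63ph1} and \eqref{psi682phi2} of the ${}_6\Psi_6^2$ in Corollary~\ref{cor52} produces an identity rewriting a ${}_3\psi_2^1$ (the reversal of a ${}_2\psi_3$) together with a ${}_3\phi_1^1$ as a single ${}_2\psi_2$; specialized to the parameters of \eqref{firstpsi23}, it fuses the argument-$c$ ${}_2\psi_3$ and the first ${}_3\phi_1^1$ into the ${}_2\psi_2\!\left(a,\tfrac{de}{qb};d,e;q,\tfrac ca\right)$ of \eqref{secpsi23}. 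The second ${}_3\phi_1^1$, with upper parameters $\tfrac{qb}c,\tfrac{qb}d,\tfrac{qb}e$, is carried through unchanged; that it is literally the same series in \eqref{firstpsi23} and \eqref{secpsi23} is the signature of this mechanism.

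The hard part is not conceptual but the bookkeeping of infinite products and modified theta functions. After the $d\leftrightarrow e$ interchange the two copies of $T_3$ carry different $q$-shifted-factorial prefactors, and reducing their difference to the theta-quotient form of \eqref{firstpsi23} demands a careful use of \eqref{tfa}, supported by repeated appeals to \eqref{aiden} to turn products such as $(a,q/a;q)_\infty/(qa,1/a;q)_\infty$ into theta ratios. I would also verify, by a degree and balance count on each side, that the change of variables sends the bilateral argument $\tfrac{bcd}a$ to $\tfrac{cde}{qab}$ and simultaneously sends $T_1'$ to the argument-$c$ series before trusting the final normalizations.
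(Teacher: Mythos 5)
Your derivation of \eqref{firstpsi23} coincides with the paper's: one exploits the invariance of the left-hand side of \eqref{cor44eq} under $d\leftrightarrow e$, observes that the ${}_2\phi_1$ term is itself $d\leftrightarrow e$-symmetric and therefore cancels, and solves for the ${}_2\psi_3$, the two surviving ${}_3\phi_1^1$ terms being merged into the theta quotients via \eqref{tfa} and \eqref{aiden}. (A small inaccuracy: the prefactor of the ${}_2\psi_3$ term in \eqref{cor44eq} is not symmetric in $b,c,d$ --- it singles out $b$ as well as $e$ --- but this is immaterial, since the only symmetry actually used is that of the ${}_2\phi_1$ term under $d\leftrightarrow e$.)

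For \eqref{secpsi23} you take a genuinely different route. The paper obtains \eqref{secpsi23} independently of \eqref{firstpsi23}, as the $b\to\infty$ confluent limit of the ${}_3\psi_3$ transformation \eqref{firstpsi33}: in that limit the ${}_3\psi_3$ on the right degenerates to the ${}_2\psi_2$ and the first bracketed ${}_3\phi_2$ vanishes, so only two terms survive. Your proposal instead fuses the argument-$c$ ${}_2\psi_3$ and the first ${}_3\phi_1^1$ of \eqref{firstpsi23} into the ${}_2\psi_2$ by means of the identity obtained from equating \eqref{6psi63ph1} with \eqref{psi682phi2} (which is exactly how the paper later derives \eqref{eq:2psi23}, so there is no circularity). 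This is viable and has the virtue of explaining why the second ${}_3\phi_1^1$ term is \emph{literally} the same in \eqref{firstpsi23} and \eqref{secpsi23}; indeed $\vt(c;q)\vt(d;q)=(c,d,\frac qc,\frac qd;q)_\infty$ converts the one prefactor into the other. One caveat you gloss over: the fusion identity must be applied to the \emph{reversed} series. Specializing \eqref{eq:2psi23} directly to $\qpsi22{a,\frac{de}{qb}}{d,e}{q,\frac ca}$ produces a ${}_3\phi_1^1$ with argument $\frac ca$, which does not match the one in \eqref{firstpsi23}; the correct substitution in \eqref{eq:2psi23} is $(a,b,c,d,z)\mapsto(\frac qd,\frac qe,\frac qa,\frac{q^2b}{de},\frac{qb}{c})$, under which the ${}_2\psi_2$, the ${}_3\psi_2^1$ and the ${}_3\phi_1^1$ become, respectively, the reversal of the ${}_2\psi_2$ of \eqref{secpsi23}, the reversal of the argument-$c$ ${}_2\psi_3$, and precisely the first ${}_3\phi_1^1$ of \eqref{firstpsi23}. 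With that adjustment and the routine prefactor bookkeeping your argument closes, so the proposal is correct, though its second half is not the paper's proof.
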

\begin{proof}
Start with \eqref{cor44eq} and compare the two invariant terms with $d$ and $e$ replaced which are equal. Cancel like terms, and solving for the first ${}_2\psi_3$ produces \eqref{firstpsi23}. To obtain \eqref{secpsi23}, start with with \eqref{firstpsi33} and take the limit as $b\to\infty$. In both cases, the second term on the right-hand side vanishes in this limit and the ${}_3\psi_3$ on the right-hand side becomes a ${}_2\psi_2$. This completes the proof. 
\end{proof}

\subsection{Transformations for the ${}_2\psi_2$ series of arbitrary argument}

\begin{cor}
\label{cor61}
Let $0<|q|<1$, $a,b,c,d,z\in\CCast$, such that $|cd/(abz)|<1$, $|z|<1$. Then one has the following 
transformation of a ${}_2\psi_2$ in terms of a sum of a ${}_3\psi_3$ and a ${}_3\phi_2$ with arguments $q$ and with one vanishing denominator element, and a very-well-poised ${}_6\Psi_6^2$, namely 
\begin{eqnarray}
&&\hspace{-0.3cm}\qpsi22{a,b}{c,d}{q,z}=\frac{(\frac{c}{a},\frac{c}{b},\frac{qc}{abz};q)_\infty}{(c,\frac{c}{ab},\frac{cd}{abz};q)_\infty}\qppsi321{a,b,\frac{abz}{c}}{d,\frac{qab}{c}}{q,q}+
\frac{(q,a,b,\frac{cd}{ab},\frac{abz}{c},\frac{qc}{abz};q)_\infty}{(c,d,\frac{ab}{c},\frac{qc}{ab},z,\frac{cd}{abz};q)_\infty}\qphyp311{\frac{c}{a},\frac{c}{b},z}{\frac{cd}{ab}}{q,q}\label{eq:2psi23}\\
%&&\hspace{2.4cm}=\frac{(az,bz,\frac{qc}{abz},\frac{qd}{abz};q)_\infty}{(z,abz,\frac{q^2}{abz},\frac{cd}{abz};q)_\infty}\qpsi68{\pm q\sqrt{\frac{abz}{q}},a,b,\frac{abz}{c},\frac{abz}{d}}{\pm\sqrt{\frac{abz}{q}},c,d,az,bz,0,0}{q,\frac{cdz}{q}}\nonumber\\
&&\hspace{2.4cm}=\frac{(az,bz,\frac{qc}{abz},\frac{qd}{abz};q)_\infty}{(z,abz,\frac{q^2}{abz},\frac{cd}{abz};q)_\infty}\qpWpsi662{\frac{abz}{q}}{a,b,\frac{abz}{c},\frac{abz}{d}}{q,\frac{cdz}{q}}.
\label{transzero}
\end{eqnarray}
\end{cor}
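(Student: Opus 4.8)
The plan is to extract both identities from the two representations of the very-well-poised ${}_6\Psi_6^2$ furnished by Corollary~\ref{cor52}, applying throughout the single replacement
\[
(a,b,c,d,e)\mapsto\Big(\tfrac{abz}{q},\,a,\,b,\,\tfrac{abz}{c},\,\tfrac{abz}{d}\Big),
\]
where the symbols $a,b,c,d,z$ on the right now denote the parameters of the present corollary. Under this replacement the balancing argument $\frac{q^2a^3}{bcde}$ of the ${}_6\Psi_6^2$ collapses to $\frac{cdz}{q}$, and the series itself becomes $\qpWpsi662{\frac{abz}{q}}{a,b,\frac{abz}{c},\frac{abz}{d}}{q,\frac{cdz}{q}}$, which is exactly the series appearing in \eqref{transzero}.

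First I would establish \eqref{transzero}. Representation \eqref{psi682phi2}, which is merely Bailey's transformation \eqref{Bailey}, writes this ${}_6\Psi_6^2$ as a ratio of infinite $q$-shifted factorials times a ${}_2\psi_2$; under the above replacement the latter series acquires numerator parameters $a,b$, denominator parameters $c,d$, and argument $z$, i.e.\ it becomes $\qpsi22{a,b}{c,d}{q,z}$. Solving \eqref{psi682phi2} for the ${}_2\psi_2$ and simplifying the resulting ratio of $q$-shifted factorials produces the prefactor $\frac{(az,bz,\frac{qc}{abz},\frac{qd}{abz};q)_\infty}{(z,abz,\frac{q^2}{abz},\frac{cd}{abz};q)_\infty}$ displayed in \eqref{transzero}, giving the second equality.

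For the first equality \eqref{eq:2psi23} I would instead apply representation \eqref{6psi63ph1} to the same ${}_6\Psi_6^2$. Here one first exploits the invariance of the ${}_6\Psi_6^2$ (and of the entire right-hand side of Corollary~\ref{cor52}) under permutations of its four upper parameters, in order to place $\frac{abz}{d}$ in the singled-out slot of \eqref{6psi63ph1}, so that the remaining three parameters $a,b,\frac{abz}{c}$ form the numerator of the resulting ${}_3\psi_3^1$. A direct computation then shows that the ${}_3\psi_3^1$ becomes $\qppsi321{a,b,\frac{abz}{c}}{d,\frac{qab}{c}}{q,q}$ and the ${}_3\phi_2^1$ becomes $\qphyp311{\frac{c}{a},\frac{c}{b},z}{\frac{cd}{ab}}{q,q}$. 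Substituting \eqref{6psi63ph1} into \eqref{transzero} (equivalently, dividing \eqref{6psi63ph1} by the prefactor of \eqref{psi682phi2}) and cancelling the factors $(az,bz,abz,\frac{q^2}{abz},\frac{qd}{abz};q)_\infty$ common to both reductions yields the two prefactors in \eqref{eq:2psi23}; in the coefficient of the ${}_3\phi_2^1$ the theta function $\vt(\frac{ab}{c};q)$ coming from the second term of \eqref{6psi63ph1} must be rewritten as $(\frac{ab}{c},\frac{qc}{ab};q)_\infty$ by the Jacobi triple product \eqref{tfdef}, which supplies precisely the factors $\frac{ab}{c},\frac{qc}{ab}$ in the denominator.

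The only genuine labor is the bookkeeping of the infinite $q$-shifted factorials in the two prefactor simplifications, coupled with the conceptual subtlety that \eqref{6psi63ph1} presents its decomposition asymmetrically, so one must choose the correct permutation of the ${}_6\Psi_6^2$ parameters before invoking it. The convergence hypotheses $|z|<1$ and $|cd/(abz)|<1$ are exactly those guaranteeing convergence of the bilateral ${}_2\psi_2$ on the left. I note in passing that \eqref{eq:2psi23} is the natural bilateral analogue of the unilateral split \eqref{2phi3phi2dzero}; everything else is a routine specialization of Corollary~\ref{cor52}.
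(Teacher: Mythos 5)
Your proposal is correct and follows essentially the same route as the paper: the authors likewise obtain \eqref{transzero} by solving \eqref{psi682phi2} (i.e.\ Bailey's transformation \eqref{Bailey}) for the ${}_2\psi_2$ under the substitution $(a,b,c,d,e)\mapsto(\frac{abz}{q},a,b,\frac{abz}{c},\frac{abz}{d})$, and then obtain \eqref{eq:2psi23} by equating with \eqref{6psi63ph1}. Your extra care in permuting the ${}_6\Psi_6^2$ parameters so that $\frac{abz}{d}$ occupies the singled-out slot of \eqref{6psi63ph1}, and in rewriting $\vt(\frac{ab}{c};q)=(\frac{ab}{c},\frac{qc}{ab};q)_\infty$, correctly fills in details the paper leaves implicit.
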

\begin{proof}
Starting with \eqref{psi682phi2} (which is \eqref{transzero}) and equating it with
\eqref{6psi63ph1} while replacing  
%\begin{equation}
$\{a,b,c,d,e\}\mapsto\left\{{abz}/{q},a,b,{qa}/{c},{qa}/{d}\right\}$
%\end{equation}
completes the proof.
\end{proof}
\begin{rem}
Note that setting $d=q$ in  
\eqref{eq:2psi23}
%\eqref{transzero} 
converts it to  \eqref{2phi3phi2dzero}.
\end{rem}
\begin{rem}
The transformation which describes the parameter interchange symmetry in the above ${}_2\psi_2$ is \eqref{qpsi22trans}.
\end{rem}

\begin{comment}
\subsection{Transformations for a ${}_2\psi_1^1$ series}
\begin{cor}
\label{cor63}
Let $0<|q|<1$, $a,b,c,z\in\CCast$. Then
\begin{eqnarray}
&&\hspace{-1.6cm}\qppsi211{a,b}{c}{q,z}=\frac{(az,\frac{c}{a},\frac{qc}{abz};q)_\infty}{(c,z,\frac{q}{b};q)_\infty}\qppsi211{a,\frac{abz}{c}}{az}{q,\frac{c}{a}}\label{eq:2psi1s}\\
&&\hspace{-0.5cm}=\frac{(\frac{c}{a},\frac{c}{b},\frac{qc}{abz};q)_\infty}{(c,\frac{c}{ab};q)_\infty}\qppsi312{a,b,\frac{abz}{c}}{\frac{qab}{c}}{q,q}+
\frac{(q,a,b,\frac{abz}{c},\frac{qc}{abz};q)_\infty}{(c,\frac{ab}{c},\frac{qc}{ab},z;q)_\infty}\qphyp302{\frac{c}{a},\frac{c}{b},z}{-}{q,q}\label{eq:2psi13}\\
&&\hspace{-0.5cm}
%\qpsi22{a,b}{c,0}{q,z}
=\frac{(\frac{c}{a},bz;q)_\infty}{(\frac{q}{a},z;q)_\infty}\qpsi12{b}{c,bz}{q,az}\label{eq:2psi112}\\
%&&\hspace{2.1cm}=\frac{(az,bz,\frac{qc}{abz};q)_\infty}{(z,abz,\frac{q^2}{abz};q)_\infty}
%\qpsi58{\pm q\sqrt{\frac{abz}{q}},a,b,\frac{abz}{c}}{\pm \sqrt{\frac{abz}{q}},c,az,bz,0,0,0}{q,\frac{abcz^2}{q}}\label{eq:2psi15}\\
&&\hspace{-0.5cm}=\frac{(az,bz,\frac{qc}{abz};q)_\infty}{(z,abz,\frac{q^2}{abz};q)_\infty}
\qpWpsi553{\frac{abz}{q}}{a,b,\frac{abz}{c}}{q,\frac{abcz^2}{q}}.
\end{eqnarray}
\end{cor}
\begin{proof}
Start with Corollary~\ref{cor61}. Replacing $d\mapsto 0$ produces the sum of the ${}_3\psi_3$ and ${}_3\phi_2$ representation and ${}_5\Psi_5^3$ representations in \eqref{eq:2psi13}. The ${}_1\psi_2$ representation  in \eqref{eq:2psi112} comes from \eqref{eqcor54} with the replacement $\{a,b,c,d\}\mapsto\{abz/q,a,qa/c,b\}$.
The second ${}_2\psi_1^1$ transformation in \eqref{eq:2psi1s} comes from Corollary~\ref{cor61}, interchanging $c,d$, and comparing terms. Finally, is obtained by letting $d\to 0$ in \eqref{transzero}.
\end{proof}
\end{comment}

\subsection{Transformation for a ${}_1\psi_3$ series}

We can obtain a transformation for a ${}_1\psi_3$ by starting with transformations of a ${}_2\psi_3$ and taking the limit as one of the numerator parameters goes to infinity.
\begin{thm}Let $0<|q|<1$, $a,b,c,d\in\CCast$. Then
\begin{eqnarray}
&&\hspace{-.5cm}\qpsi13{a}{b,c,d}{q,\frac{bcd}{qa}}=
\frac{(a,\frac{q^2a}{bd};q)_\infty}{(c,\frac{qa}{b},\frac{qa}{d};q)_\infty}\qpsi12{\frac{bd}{qa}}{b,d}{q,c}-\frac{(q,a,\frac{q}{b},\frac{q}{c},\frac{q}{d};q)_\infty}{(\frac{qa}{b},\frac{qa}{c},\frac{qa}{d};q)_\infty\vt(\frac{1}{a};q)}\qphyp302{\frac{qa}{b},\frac{qa}{c},\frac{qa}{d}}{-}{q,q}.
\end{eqnarray}
\end{thm}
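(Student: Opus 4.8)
The plan is to obtain this identity as a confluent limit of the ${}_2\psi_3$ transformation \eqref{secpsi23}, letting one upper parameter tend to infinity so that the ${}_2\psi_3$ on the left degenerates to a ${}_1\psi_3$. The right choice is to send $a\to\infty$ while holding $b,c,d,e$ fixed: on the right-hand side of \eqref{secpsi23} the parameter $a$ enters only through the factors $\frac{c}{a}$, $\frac{q}{a}$, $\frac{qb}{a}$ and the argument $\frac{c}{a}$ of the ${}_2\psi_2$, whereas $b$ appears essentially everywhere, so $a\to\infty$ is the clean limit. Throughout, the governing device is the confluence relation \eqref{critlim} in the form $\lim_{a\to\infty}a^{-k}(a;q)_k=q^{\binom{k}{2}}(-1)^k$.

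On the left-hand side I would write the argument as $\frac{cde}{qab}=\frac{cde}{qb}\cdot a^{-1}$ and pair the factor $a^{-k}$ with $(a;q)_k$ in each summand; the extra factor $q^{\binom{k}{2}}(-1)^k$ produced by \eqref{critlim} raises the exponent $s-r$ from $1$ to $2$, so that $\qpsi23{a,b}{c,d,e}{q,\frac{cde}{qab}}\to\qpsi13{b}{c,d,e}{q,\frac{cde}{qb}}$. On the right-hand side the prefactors simplify since $(\frac{c}{a};q)_\infty,(\frac{q}{a};q)_\infty,(\frac{qb}{a};q)_\infty\to 1$; the ${}_2\psi_2$ collapses to $\qpsi12{\frac{de}{qb}}{d,e}{q,c}$ by the same pairing of $(a;q)_k$ with $(\frac{c}{a})^k$; and in the second term the lower parameter $\frac{qb}{a}$ of $\qphyp311{\frac{qb}{c},\frac{qb}{d},\frac{qb}{e}}{\frac{qb}{a}}{q,q}$ tends to $0$, converting the single vanishing bottom entry into a double one, whence the ${}_3\phi_1^1$ becomes $\qphyp302{\frac{qb}{c},\frac{qb}{d},\frac{qb}{e}}{-}{q,q}$. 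Assembling the three pieces gives the asserted identity in the variables $b,c,d,e$, and the relabeling $(b,c,d,e)\mapsto(a,c,b,d)$ (that is, $b\mapsto a$, $d\mapsto b$, $e\mapsto d$, with $c$ fixed), together with the symmetry of the ${}_1\psi_3$ in its three lower parameters, casts it into the displayed form.

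The formal computation is routine; the only genuine obstacle is justifying that the term-by-term limit may be interchanged with the bilateral summation. As with the $q\to1^{-}$ limits elsewhere in the paper, this requires dominating the summands uniformly in $a$ by a single convergent bilateral series—controlling both the $k\to+\infty$ and $k\to-\infty$ tails—so that a Tannery/dominated-convergence argument applies; I would invoke the same reasoning indicated in the footnote to Theorem~\ref{thm337H7}. The remaining manipulations of infinite products and the identification of the degenerate series in the van de Bult--Rains notation are bookkeeping.
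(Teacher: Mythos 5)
Your proposal is correct and matches the paper's method: the paper also derives this ${}_1\psi_3$ identity as the confluent limit $a\to\infty$ of the immediately preceding bilateral transformations, citing \eqref{firstpsi23} or \eqref{secpsi33}, while you start from the companion form \eqref{secpsi23} of the same ${}_2\psi_3$ theorem. If anything your choice is the more direct one, since the two-term right-hand side of \eqref{secpsi23} degenerates termwise (the ${}_2\psi_2$ to the ${}_1\psi_2$, and the ${}_3\phi_1^1$ to the ${}_3\phi_0^2$ as its remaining lower parameter $\tfrac{qb}{a}\to 0$) exactly onto the claimed identity, and your relabeling $b\mapsto a$, $d\mapsto b$, $e\mapsto d$ is the correct one.
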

\begin{proof}
Start with \eqref{firstpsi23} or \eqref{secpsi33} and let $a\to\infty$. In both cases the limit leads to the same transformation, which completes the proof.
\end{proof}

\subsection{Summation for a ${}_0\psi_1$ series}

Taking the limit $b\to 0$ in Corollary~\ref{cor54} converts the ${}_2\psi_2$ to a ${}_1\psi_1$ which is
summable using Ramanujan's ${}_1\psi_1$ summation which produces the following summation result. In fact, the following result is just a confluent limit of Ramanujan's ${}_1\psi_1$ summation \eqref{Rama1psi1}, namely 
replacing $z$ by $z/a$, taking $a\to\infty$ and  replacing $b$ by $a$.

\begin{cor}
\label{cor63b}
Let $0<|q|<1$, $a,b,c,z\in\CCast$. Then
\begin{eqnarray}
&&\hspace{-10.7cm}
%\qpsi11{a}{b}{q,z}=
\qpsi01{-}{a}{q,z}
=\frac{(q,z,\frac{q}{z};q)_\infty}{(a,\frac{a}{z};q)_\infty}.
\end{eqnarray}
\end{cor}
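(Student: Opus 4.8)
The plan is to realize this summation as a confluent degeneration of Ramanujan's ${}_1\psi_1$ summation \eqref{Rama1psi1}, precisely as the statement anticipates. Writing Ramanujan's formula with a free top parameter $\alpha$, specializing its argument to $z/\alpha$ and renaming its bottom parameter to $a$, one has
\[
\qpsi11{\alpha}{a}{q,\frac{z}{\alpha}}
=\frac{(q,\frac{a}{\alpha},z,\frac{q}{z};q)_\infty}{(a,\frac{q}{\alpha},\frac{z}{\alpha},\frac{a}{z};q)_\infty},
\]
valid whenever $|a|<|z|<|\alpha|$. The idea is then to let $\alpha\to\infty$ on both sides. For all sufficiently large $|\alpha|$ the condition $|z|<|\alpha|$ holds automatically, so the only surviving restriction is $|a|<|z|$, which is exactly the convergence criterion for a ${}_0\psi_1$ series recorded after \eqref{bbhs} (there $s-r=1$ and the empty numerator product forces $|a|<|z|$).

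On the right-hand side the limit is immediate: the three factors $(\frac{a}{\alpha};q)_\infty$, $(\frac{q}{\alpha};q)_\infty$ and $(\frac{z}{\alpha};q)_\infty$ each tend to $(0;q)_\infty=1$ as $\alpha\to\infty$, while all remaining factors are independent of $\alpha$, leaving the claimed product $\frac{(q,z,\frac{q}{z};q)_\infty}{(a,\frac{a}{z};q)_\infty}$. On the left-hand side I would expand the series and isolate the $\alpha$-dependence,
\[
\qpsi11{\alpha}{a}{q,\frac{z}{\alpha}}
=\sum_{k=-\infty}^\infty\frac{(\alpha;q)_k}{(a;q)_k}\Big(\frac{z}{\alpha}\Big)^k
=\sum_{k=-\infty}^\infty\frac{1}{\alpha^k}(\alpha;q)_k\,\frac{z^k}{(a;q)_k},
\]
and then apply the critical limit \eqref{critlim} with $x=1$, namely $\lim_{\alpha\to\infty}\alpha^{-k}(\alpha;q)_k=q^{\binom{k}{2}}(-1)^k$, to each summand. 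For $k\ge0$ this is \eqref{critlim} directly; for $k<0$ it follows from the reflection formula $(a;q)_{-n}=q^{\binom{n+1}2}(-a)^{-n}(q/a;q)_n^{-1}$, which gives the same value since $\binom{-m}{2}=\binom{m+1}{2}$. The term-wise limit thus yields $\sum_{k=-\infty}^\infty\frac{(-1)^kq^{\binom{k}{2}}z^k}{(a;q)_k}$, which is exactly $\qpsi01{-}{a}{q,z}$ by the definition \eqref{bbhs}. Equating the two limits proves the identity.

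The only genuine obstacle is the rigorous justification of taking $\alpha\to\infty$ term by term inside the bilateral sum. As in the proof of Theorem~\ref{thm337H7}, the term-wise limit is straightforward to carry out \emph{formally} (and is what reveals the result), but interchanging the limit with the summation requires dominating the summands, uniformly for large $|\alpha|$, by a single convergent two-sided series independent of $\alpha$. The subtlety is that for each fixed $\alpha$ the series converges only geometrically, with rate governed by $|z|<|\alpha|$, whereas the limiting ${}_0\psi_1$ converges through the super-exponential factor $q^{\binom{k}{2}}$ as $k\to+\infty$ and through $(|a|/|z|)^{|k|}$ as $k\to-\infty$ under $|a|<|z|$; reconciling these decay mechanisms uniformly in $\alpha$ is exactly the delicate point. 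I would settle it by the standard confluence argument of Koornwinder \cite[Appendix~A]{Koornwinder1990}, already relied upon for Theorem~\ref{thm337H7}. Once the interchange is justified, the two displays above combine to give the stated evaluation.
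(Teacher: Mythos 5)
Your proof is correct, and it is precisely the ``confluent limit of Ramanujan's $_1\psi_1$ summation'' that the paper itself advertises in the sentence immediately preceding the corollary; however, the paper's formal proof takes a different route. There, one starts from Corollary~\ref{cor54}, lets $b\to 0$ so that the $_2\psi_1^1$ degenerates to a $_1\psi_1$, evaluates that $_1\psi_1$ by Ramanujan's summation \eqref{Rama1psi1}, and then solves the resulting identity for the $_0\psi_1$ --- so the corollary is obtained as a by-product of the chain of very-well-poised limit transformations built up in Section~5, whereas you bypass that chain entirely and apply confluence to \eqref{Rama1psi1} directly via \eqref{critlim}. Your route is more self-contained and more elementary (it does not inherit the hypotheses and provenance of Corollary~\ref{cor54}), and your handling of the $k<0$ terms through $(a;q)_{-n}=q^{\binom{n+1}2}(-a)^{-n}(q/a;q)_n^{-1}$ together with $\binom{-m}{2}=\binom{m+1}{2}$ is exactly what is needed to make the termwise limit legitimate on both tails; you also correctly identify the convergence condition $|a|<|z|$, which the corollary's hypotheses omit. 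The one caveat --- rigorously interchanging the limit $\alpha\to\infty$ with the bilateral sum --- is real, but you flag it and point to the standard domination argument of Koornwinder, which is the same level of rigor the paper itself adopts (cf.\ the footnote in the proof of Theorem~\ref{thm337H7}); the paper's own route quietly relies on analogous limit interchanges at the earlier stages of Section~5, so neither approach is more vulnerable on this point.
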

\begin{proof}
Start with Corollary~\ref{cor54} and take the limit $b\to 0$ which converts the ${}_2\psi_1^1$ to a ${}_1\psi_1$, then replace $c\mapsto b$. The ${}_1\psi_1$ can be summed using
Ramanujan's ${}_1\psi_1$ summation
\eqref{Rama1psi1}. Solving for the ${}_0\psi_1$ replacing $\{az,b\}\mapsto \{z,a\}$ and canceling common factors completes the proof.
\end{proof}

\begin{comment}
\subsection{Transformations for a ${}_0\psi_1^1$ series}

\begin{cor}
\label{cor512}
Let $0<|q|<1$, $a,z\in\CCast$. Then
\begin{eqnarray}
&&\hspace{-2.7cm}\qppsi011{-}{a}{q,z}=
\frac{(\frac{qa}{z};q)_\infty}{(a;q)_\infty}\qppsi102{\frac{z}{a}}{-}{q,a}
%\nonumber\\
%&&\hspace{0.0cm}=\frac{(\frac{qa}{z};q)_\infty}{(z,\frac{q^2}{z};q)_\infty}\qpsi38{\pm q\sqrt{\frac{z}{q}},\frac{z}{a}}{\pm\sqrt{\frac{z}{q}},a,0,0,0,0,0}{q,\frac{az^2}{q}}
=\frac{(\frac{qa}{z};q)_\infty}{(z,\frac{q^2}{z};q)_\infty}\qpWpsi335{\frac{z}{q}}{\frac{z}{a}}{q,\frac{az^2}{q}}.
%\nonumber\\
%&&\hspace{2cm}=\frac{(\frac{qa}{z};q)_\infty}{(a;q)_\infty}\qpsi12{\frac{z}{a}}{0,0}{q,a}.
\end{eqnarray}
\end{cor}
\begin{proof}
Start with Corollary~\ref{cor69} and take the limit as $b\to 0$. The limit of the ${}_2\psi_2$ is the ${}_1\psi_2$. This completes the proof.
\end{proof}
\end{comment}

\subsection{Summation for the ${}_0\psi_0^2$ series}

\begin{cor}Let $0<|q|<1$, $z\in\CCast$. Then
\begin{eqnarray}
&&\hspace{-3.0cm}\qppsi002{-}{-}{q,z}
%=\frac{1}{(z,\frac{q^2}{z};q)_\infty}\qpsi28{\pm q\sqrt{{\frac{z}{q}}}}{\pm \sqrt{\frac{z}{q}},0,0,0,0,0,0}{q,\frac{z^3}{q}}
=\frac{1}{(z,\frac{q^2}{z};q)_\infty}\qpWpsi226{\frac{z}{q}}{-}{q,\frac{z^3}{q}}
=\frac{(q^2;q^2)_\infty\vartheta
(\frac{z}{q};q^2)\vartheta(z^2;q^4)}{\vartheta(\frac{z}{q};q)}.
\end{eqnarray}
\end{cor}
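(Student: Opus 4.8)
The plan is to read off both asserted equalities directly from the quintuple product identity already recorded in Corollary~\ref{cor57}; the present statement is essentially a relabeled form of that result. Recall that Corollary~\ref{cor57} asserts, for $a\in\CCast$ with $a\ne1$,
\begin{equation*}
(1-a)\qpWpsi226{a}{-}{q,q^2a^3}=\vt(a;q)\,\qppsi002{-}{-}{q,qa}=(q^2;q^2)_\infty\vt(a;q^2)\,\vt(q^2a^2;q^4).
\end{equation*}
The whole argument is the substitution $a\mapsto\frac zq$, under which $qa=z$, $q^2a^2=z^2$, and $q^2a^3=\frac{z^3}q$, so that the three specialized series match exactly those appearing in the claim.

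For the rightmost equality I would divide the equality of the middle and last members of Corollary~\ref{cor57} through by $\vt(a;q)$ and then set $a=\frac zq$, obtaining
\begin{equation*}
\qppsi002{-}{-}{q,z}=\frac{(q^2;q^2)_\infty\,\vt(\tfrac zq;q^2)\,\vt(z^2;q^4)}{\vt(\tfrac zq;q)},
\end{equation*}
which is precisely the stated closed form. For the middle equality I would instead divide the equality of the \emph{first two} members of Corollary~\ref{cor57} by $\vt(a;q)$, giving $\qppsi002{-}{-}{q,qa}=\frac{1-a}{\vt(a;q)}\qpWpsi226{a}{-}{q,q^2a^3}$, and then simplify the scalar prefactor after setting $a=\frac zq$. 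Using the definition $\vt(w;q)=(w,\tfrac qw;q)_\infty$ from \eqref{tfdef} together with the elementary factorization $(\tfrac zq;q)_\infty=(1-\tfrac zq)(z;q)_\infty$, one finds $\vt(\tfrac zq;q)=(\tfrac zq,\tfrac{q^2}z;q)_\infty=(1-\tfrac zq)(z,\tfrac{q^2}z;q)_\infty$, so the factor $1-\tfrac zq$ cancels and the prefactor collapses to $\frac1{(z,\frac{q^2}z;q)_\infty}$, yielding the middle member of the asserted chain.

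The only point demanding care is the hypothesis $a\ne1$ of Corollary~\ref{cor57}, i.e.\ $z\ne q$: at $z=q$ both the numerator factor $\vt(\tfrac zq;q^2)$ and the denominator $\vt(\tfrac zq;q)$ vanish, so the closed form is a $0/0$ indeterminacy there. I would remove this by analyticity. From \eqref{bbhs} and \eqref{bbotzero}, using $(0;q)_k=1$, the left-hand series is $\qppsi002{-}{-}{q,z}=\sum_{k=-\infty}^\infty q^{k^2-k}z^k$, a holomorphic function of $z\in\CCast$, while the right-hand side continues holomorphically across $z=q$; hence the identity proved for $z\ne q$ extends to all $z\in\CCast$ by continuation. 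This continuation is the only genuine obstacle; everything else is bookkeeping of the specialization $a\mapsto z/q$. As a consistency check, applying the triple product form of \eqref{tfdef} with base $q^2$ to $\sum_k q^{k^2-k}z^k=\sum_k(q^2)^{\binom k2}z^k$ evaluates the series directly as $(q^2;q^2)_\infty\,\vt(-z;q^2)$, which agrees with the stated product after a single elementary splitting of $q$-shifted factorials into even and odd parts.
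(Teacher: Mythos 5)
Your proof is correct, but it takes a different (and in fact more economical) route than the paper. The paper's own proof does not touch Corollary~\ref{cor57} at all: it returns to Corollary~\ref{cor56}, takes the limit $b\to\infty$ there (which turns the ${}_3\Psi_3^5$ into the ${}_2\Psi_2^6$ and the ${}_1\psi_0^2$ into the ${}_0\psi_0^2$, and collapses the prefactor to $(qa,\frac qa;q)_\infty$, giving the first equality directly), and then evaluates the ${}_0\psi_0^2$ by the triple product identity \eqref{tfdef} in base $q^2$, i.e.\ as $(q^2;q^2)_\infty\vt(-z;q^2)$, which is then matched to the quintuple-product form. You instead observe that the corollary is literally Corollary~\ref{cor57} under $a\mapsto z/q$ divided through by $\vt(a;q)$, using $\vt(\frac zq;q)=(1-\frac zq)(z,\frac{q^2}z;q)_\infty$ to absorb the $(1-a)$ factor; your consistency check at the end reproduces exactly the intermediate form $(q^2;q^2)_\infty\vt(-z;q^2)$ that the paper's route passes through. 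What your route buys is that no limit computation and no fresh series evaluation are needed; what the paper's route buys is independence from Corollary~\ref{cor57} and an explicit derivation of the closed form from the triple product. You are also more careful than the paper about the excluded point $z=q$ (where $\vt(\frac zq;q)$ and $\vt(\frac zq;q^2)$ both vanish), removing it by analytic continuation of the entire function $\sum_k q^{k^2-k}z^k$; note only that at $z=q$ the middle member still needs a limiting interpretation, since the very-well-poised factor of the ${}_2\Psi_2^6$ carries $1/(1-a)$ there — a caveat the paper's statement also leaves implicit.
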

\begin{proof}
Start with Corollary~\ref{cor56}
%~\ref{cor512} 
and take the limit as $b\to \infty$. The limit of the ${}_1\psi_0^2$ is the ${}_0\psi_0^2$. 
Note that the triple-product identity
\eqref{tfdef}, given by 
\begin{equation}
(q,z,\tfrac{q}{z};q)_\infty=\sum_{n=-\infty}^\infty q^{\binom{n}{2}}(-z)^n=\qppsi001{-}{-}{q,z}.
\end{equation}
If one replace $q\mapsto q^2$, then we obtain
\begin{equation}
(q^2,z,\tfrac{q^2}{z};q^2)_\infty=\sum_{n=-\infty}^\infty q^{2\binom{n}{2}}(-z)^n=\qppsi001{-}{-}{q^2,z}=\qppsi002{-}{-}{q,-z}.
\end{equation}
This completes the proof.
\end{proof}

%Having expressed the very-well-poised ${}_2\Psi_2^6$ hypergeometric series as a ${}_0\psi_2$ with two vanishing denominator elements, namely  
%\begin{equation}
%\label{quintuple}
%\sum_{k=-\infty}^\infty q^{2\binom{k}{2}}z^k=\qppsi002{-}{-}{q,z}=\frac{(q^2;q^2)_\infty\vartheta
%(\frac{z}{q};q^2)\vartheta(z^2;q^4)}{\vartheta(\frac{z}{q};q)}.
%\end{equation}

\section*{Acknowledgements}
Much appreciation for valuable discussions with S.~Ole Warnaar.
%and Alex Pascadi.
We further wish to thank the reviewers who helped a lot to improve this paper.

%%%\section*{References}
%\bibliographystyle{plain}
%bibliography{refbib}

\def\cprime{$'$} \def\dbar{\leavevmode\hbox to 0pt{\hskip.2ex \accent"16\hss}d}

\section*{Statements \& Declarations}

\noindent {\bf Funding}\\[0.2cm]
\noindent The second author was partially supported by the Austrian Science Funds FWF, grant 10.55776/P32305.
Apart from that, the authors declare that no funds, grants, or other support were received during the preparation
of this manuscript.\\[0.0cm]

\noindent {\bf Competing Interests}\\[0.2cm]
\noindent The authors have no relevant financial or non-financial interests to disclose.\\[0.0cm]

\noindent {\bf Author Contributions}\\[0.2cm]
\noindent All authors contributed to the study, conception and design. Material preparation, data collection and analysis were performed by all authors. The first draft of the manuscript was written by all authors and all authors commented on previous versions of the manuscript. All authors read and approved the final manuscript.\\[0.0cm]

\begin{comment}
\noindent {\bf Data Availability}\\[0.2cm]
\noindent All datasets generated during and/or analyzed during the current study are available from the corresponding author on reasonable request.\\[0.0cm]

\noindent {\bf Data transparency}\\[0.2cm]
\noindent All data and materials as well as software application or custom code support their published claims and comply with field standards.\\[0.0cm]
\end{comment}

\end{document}